\documentclass[11pt]{amsart}
\usepackage{amssymb,latexsym,amsmath,amscd,amsthm,amsfonts, enumerate}
\usepackage{multirow}
\usepackage{color}
\usepackage[all]{xy}
\usepackage{caption}
\usepackage{soul}
\usepackage{float}
\usepackage{titletoc}
\usepackage[normalem]{ulem}
\usepackage{graphicx}
\usepackage{enumitem}

\usepackage[dvipsnames,svgnames]{xcolor}

\usepackage{tikz,tikz-cd}
\usepackage{mathrsfs}
\usepackage[all]{xy}
\usepackage{tikz}
\usepackage{extarrows}
\usepackage{tikz-cd}
\usetikzlibrary{calc}
\usetikzlibrary{matrix,arrows}
\usetikzlibrary{decorations.pathmorphing}
\usetikzlibrary{shapes.geometric,positioning}
\usetikzlibrary{positioning,shapes,shadows}

\usetikzlibrary{cd} 
\usetikzlibrary{arrows.meta}

\usepackage[colorlinks=true,pagebackref,hyperindex]{hyperref}
\newcommand\myshade{85}
\colorlet{mylinkcolor}{BlueViolet}
\colorlet{mycitecolor}{red}
\colorlet{myurlcolor}{green}

\hypersetup{
  linkcolor  = mylinkcolor!\myshade!black,
  citecolor  = mycitecolor!\myshade!black,
  urlcolor   = myurlcolor!\myshade!black,
  colorlinks = true,
}

\usepackage{tabularx}
\newcolumntype{E}{>{\hsize=0.5cm \centering\arraybackslash}X}%
\newcolumntype{C}[1]{>{\hsize=#1\hsize \centering\arraybackslash}X}%

\numberwithin{equation}{section}
\newtheorem{theorem}{Theorem}[section]
\newtheorem{proposition}[theorem]{Proposition}

\newtheorem{lemma}[theorem]{Lemma}
\newtheorem{question}[theorem]{Question}
\newtheorem{theoremA}{Theorem}

\theoremstyle{definition}
\newtheorem{remark}[theorem]{Remark}
\newtheorem{example}[theorem]{Example}
\newtheorem{definition}[theorem]{Definition}
\newtheorem{notation}[theorem]{Notation}

\definecolor{dark-green}{RGB}{14,150,2}
\definecolor{red}{RGB}{250,0,0}

\newcommand{\bpoint}{\circ}
\newcommand{\rpoint}{\color{red}{\bullet}}

\usepackage{tikz}
\usetikzlibrary{arrows.meta}

\tikzset{
    blackarrow/.style={
        bend left=30,
        line width=0.8pt,
        black,
        ->,
        >={Stealth[length=2.4pt, width=3.5pt]}
    },
    bluearrow/.style={
        line width=1.2pt,
        blue,
        ->,
        >={Triangle[length=3pt, width=4pt]}
    }
}

\begin{document}

\title[On Geometric Models of String Algebras]{On Geometric Models of String Algebras: Uniqueness of Surfaces and Existence of Red Punctures}

\author[Zheng Xin]{Zheng Xin}
\address{Zheng Xin,
	School of Mathematics and Statistics, Shaanxi Normal University, Xi'an 710062, China}
\email{xinzheng1314@yeah.net}

\author[LingChun Zhang]{LingChun Zhang}
\address{LingChun Zhang,
	School of Mathematics and Statistics, Shaanxi Normal University, Xi'an 710062, China}
\email{zlcxpy912@163.com}

\thanks{
}

\keywords{string algebras; geometric model; labelled tiling algebra; red punctures}

\thanks{}

%

\subjclass[2020]{05E10, 
16G20, 
05C10}

\begin{abstract}
A geometric model for string algebras was recently established in \cite{BC24}. Building upon this framework, we characterize the class of string algebras whose geometric models are unique up to equivalence of labelled tiled surfaces. 

Moreover, we provide a necessary and sufficient condition for all geometric models of a string algebra to be entirely free of red punctures, and further give a combinatorial description of string algebras with a common red puncture across all geometric models. In addition, we derive a sufficient condition for a string algebra guaranteeing the presence of red punctures in all geometric models.
\end{abstract}

\maketitle
\setcounter{tocdepth}{2} 

\tableofcontents

\section*{Introduction}\label{Introductions}

In recent years, topological and geometric methods have been extensively applied to the representation theory of algebras.
Numerous studies have investigated geometric models for various categories associated with an algebra. Such geometric models allow one to exploit combinatorial and topological tools from surfaces to obtain a variety of structural results on the corresponding categories.

The geometric interpretations of gentle algebras have been thoroughly developed, encompassing cluster theory \cite{ABCJP12,BZ11,CCS06,CS17,L09}, as well as Fukaya categories \cite{HKK17,LP20}. 
The geometric model for the module category of a gentle algebra was established in \cite{BC21}, and the derived category of a graded gentle algebra can be realized as partially wrapped Fukaya categories of graded marked surfaces; see \cite{HKK17,LP20,OPS18}.
In \cite{C26}, the author deformed the geometric model for the module category given in \cite{BC21}, and then embedded it into the geometric model of the derived category given in \cite{OPS18}.

As a generalization of gentle algebras, string algebras are introduced in \cite{BR87}.
Recently, Baur and Coelho Simões provided geometric models for string algebras and their module categories \cite{BC24}. In fact, a string algebra can be regarded as a quotient algebra of a locally gentle algebra, which can be realized by using so-called dual cellular dissections of oriented marked surfaces \cite {PPP19}. 
Then a string algebra can be realized on the surface by adding labels on the dual cellular dissection, and the authors in \cite{BC24} realized indecomposable modules by permissible curves crossing the dual dissection, where they call such a surface model a labeled tiled surface. 

However, the existence of different choices of locally gentle algebras associated to a string algebra means that the surface associated to a string algebra is not unique in
general (Example \ref{ex:not unique} illustrates this). The reason is that some vertices of string algebras are so-called \emph{non-gentle vertices}  (see Definition \ref{def:non-gentle vertices}). A natural question thus emerges: \\

{\bf Which string algebra admits a unique geometric model up to equivalence?}\\

The main aim of this note is to answer this question. Throughout this paper, we only consider \emph{saturated} labelled tiled surfaces introduced in Definition \ref{def:saturated_model}, which arise from saturated locally gentle covers defined by maximal quadratic ideals of the string algebra.

\begin{theoremA}[Theorem \ref{main-theorem}]\label{Mtheorem:object}
    Let $A$ be a string algebra. The geometric model of $A$ is unique if and only if every non-gentle vertex of Type $\mathrm{(I)}$ or Type $\mathrm{(III)}$ (if any) satisfies the three conditions of Lemma \ref{three-conditions}.
  \end{theoremA}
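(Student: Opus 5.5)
The plan is to reduce uniqueness of the geometric model to uniqueness of the saturated locally gentle cover, and then to analyse that cover one non-gentle vertex at a time. By Definition \ref{def:saturated_model}, every saturated labelled tiled surface of $A=kQ/I$ comes from a maximal quadratic ideal $I'\subseteq I$ with $kQ/I'$ locally gentle. The construction of \cite{PPP19,BC24} turns a locally gentle algebra into a dual cellular dissection, and this is functorial up to isomorphism: non-isomorphic covers give non-equivalent labelled tiled surfaces, and isomorphic covers give equivalent ones. So I would first state that the geometric model of $A$ is unique if and only if all saturated locally gentle covers $kQ/I'$ agree up to an isomorphism compatible with the labels. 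I would try to prove this as a preliminary step. Since the dissection and the labels (the relations of $I$ not in $I'$) can be recovered from the tiled surface, equivalence of models reduces to $I'$ being determined up to relabelling.

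Second, I localise. At a gentle vertex, the quadratic relations of $I$ already form a gentle local configuration, so every maximal quadratic ideal must contain exactly those relations; there is no freedom there. At a non-gentle vertex $v$, the string condition allows more than two arrows in or out in total, or it allows relations missing in the way gentleness needs. A saturated cover then has to choose which length-two paths through $v$ become relations, subject to the locally gentle constraint that each arrow has at most one killed and at most one non-killed continuation. Choices at different vertices are independent, because the constraints only involve paths of length two through a single vertex. I would use the classification of non-gentle vertices into Types (I), (II), (III) (Definition \ref{def:non-gentle vertices} and what follows it) and show, for each type, when the admissible local choices give isomorphic covers.

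Third, I would handle the case analysis. For Type (II), I expect that either the local choice is forced or the alternatives differ by an automorphism swapping parallel arrows, so the resulting surfaces are equivalent. This would explain why Type (II) does not appear in the statement. For Types (I) and (III), Lemma \ref{three-conditions} gives three conditions, and I would prove a dichotomy. If the conditions hold, then either only one maximal local choice exists or all choices are related by a symmetry preserving the labels. If some condition fails, I would explicitly build two maximal quadratic ideals that differ at $v$, in the spirit of Example \ref{ex:not unique}, and separate the resulting surfaces by an invariant. Natural invariants are the number of boundary components, the number of marked points on a given boundary component, the genus computed by Euler characteristic, or the number of punctures. Choosing a different pair of consecutive arrows to be "non-relation" changes how the polygons glue around $v$, and that changes the cyclic order of boundary segments.

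The main obstacle is this last separation step. I need to show that the two covers really give non-equivalent labelled tiled surfaces, and not merely different ideals. The difficulty is that a relabelling of arrows might still identify them. I would first try counting boundary components and marked points via the permutation description of the dissection: boundary components correspond to cycles of "maximal paths" in the gentle sense. Then I would check that a failing condition of Lemma \ref{three-conditions} changes a cycle length locally at $v$, while nothing changes elsewhere.
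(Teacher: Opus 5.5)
Your reduction to saturated locally gentle covers taken up to label-compatible isomorphism is exactly the paper's Lemma \ref{Lem:geo-unique}. Your Type (II) conclusion is also right: the local deletion there is forced, cf.\ Lemma \ref{lem:non-II}.

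\textbf{Sufficiency.} The gap is at your localisation step. Admissible deletion choices at different non-gentle vertices can indeed be made independently, but \emph{equivalence} of the resulting labelled models is not local. A label-preserving symmetry at $v$ only yields an equivalence if it extends to a global label-preserving isomorphism $(Q,J_\delta)\to(Q,J_{\delta'})$, and switching the choice at one vertex can destroy that extendability at another. This is why \ref{U1}\textup{(a2)} and \ref{U3} exist at all. In particular, \ref{U3} is a condition on pairs of Type (III) vertices and cannot come out of a vertex-by-vertex dichotomy.

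Example \ref{ex:condition(3)} shows the failure:
\begin{itemize}
\item the automorphism $3\leftrightarrow 4$, $b\leftrightarrow c$, $d\leftrightarrow e$ of $(Q,I)$ interchanges the two local choices at vertex $2$ and also at vertex $5$, so each vertex individually has a symmetry;
\item yet $J_1=\langle ba,df\rangle$ and $J_3=\langle ba,ef\rangle$, which differ only at vertex $5$, give inequivalent models, because that automorphism switches both choices at once.
\end{itemize}
Note also that at Type (I) and (III) vertices there are always exactly two admissible local choices, so your alternative ``only one maximal local choice'' never occurs.

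What is missing is a global mechanism. The paper uses a dependency graph $G_D$ on the Type (I)/(III) vertices, proves it acyclic using \ref{U1}\textup{(a2)} and \ref{U3}, and then inducts: it switches the choice at a sink $v^*$ and glues the local isomorphism with the identity on $C(v^*)$ (Remark \ref{rem:complementary_bound_subquiver}).

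\textbf{Necessity.} The argument breaks at the separation step. Your invariants (genus, number of boundary components, marked points per component, number of punctures) only see the underlying marked surface. Definition \ref{def:equivalence_models}, however, also requires matching the dissection and the ordered labels.
\begin{itemize}
\item In Example \ref{ex:label set}, where \ref{U2} fails at vertex $7$, all four covers are isomorphic and the dissections in Figure \ref{fig:label set} coincide; only the label positions differ. This also refutes your intermediate claim that isomorphic covers give equivalent models.
\item In Example \ref{ex-sys I}, where \ref{U1}\textup{(a2)} fails, both models in Figure \ref{fig:connect} live on a disc with two holes carrying the same marked points on each boundary component; only the dissection changes.
\end{itemize}
So a failing condition need not change any cycle length or boundary count.

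You need an invariant of the labelled bound quiver itself. When \ref{U1}\textup{(a1)} or \ref{U2} fails, the paper uses the non-existence of a local label-preserving isomorphism, to which any global one would restrict. When \ref{U1}\textup{(a2)} or \ref{U3} fails, it uses the partition of the distinguished branches into connected components of the permitted-connection graph: two components versus one, respectively sizes $3+1$ versus $2+2$. Every label-preserving bound-quiver isomorphism must preserve this partition.
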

  
As mentioned above, there may exist several geometric models for a given string algebra, and some of them may contain red punctures, while some of them may not. 
The second result of this note describes a string algebra whose surface models have no red punctures. To this end, we rely on a specific combinatorial structure in the quiver, termed a \textit{gentle-bounded cycle} (see Definition \ref{def:gentle-bounded}).

\begin{theoremA}[Theorem \ref{thm:no-puncture}]\label{thm:B}
	Let \(A=\mathbf{k}Q/I\) be a string algebra. Then all geometric models of \(A\) contain no red puncture if and only if every primitive oriented cycle in \(Q\), if any, is
gentle-bounded.
\end{theoremA}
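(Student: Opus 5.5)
The plan is to translate "red puncture" into a statement about the quiver of a saturated locally gentle cover, and then compare the covers that the possible maximal quadratic ideals produce. In the construction of \cite{PPP19} used by \cite{BC24}, a saturated locally gentle cover $B=\mathbf{k}Q/J$ of $A$ (with $J\subseteq I$ a maximal quadratic ideal) is realized on a marked surface. Punctures of that surface correspond bijectively to oriented cycles $\alpha_1\cdots\alpha_n$ in $Q$ with $\alpha_{i+1}\alpha_i\notin J$ for all $i$ (indices mod $n$), i.e. cycles with no relations in $B$ at any consecutive pair. Such a puncture is red exactly when the cycle is non-zero in $B$ but some power, or some subpath, lies in $I$. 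The first step is to state this dictionary precisely, citing the definitions of the labelled tiled surface and of red punctures from \cite{BC24}. Once that is done, the theorem becomes: for every maximal quadratic $J\subseteq I$, no relation-free cycle of $J$ is a red cycle.

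The second step is to take each primitive oriented cycle $c$ of $Q$ and describe the set of $J$ for which $c$ is relation-free. At a gentle vertex there is no choice: $J$ and $I$ agree on the length-two relations there. At a non-gentle vertex of Types (I), (II) or (III) (Definition~\ref{def:non-gentle vertices}), the choice of $J$ decides which composition through that vertex is declared non-zero. My understanding of Definition~\ref{def:gentle-bounded} is that it says: either $c$ passes through a gentle vertex at which some composition along $c$ lies in $I$, or the relations along $c$ are forced in every saturated cover. Making this precise is the part I expect to need the most care.

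For sufficiency, assume every primitive cycle is gentle-bounded and fix a saturated $J$ and a relation-free cycle $w$ of $B$. Write $w$ as a power of a primitive cycle $c$. Gentle-boundedness produces a consecutive pair along $c$ that lies in $I$ and is forced into $J$ by saturation, contradicting relation-freeness. Hence no cycle of $J$ that is not already relation-free in $I$ survives. The puncture is therefore a genuine (uncoloured) puncture and not red.

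For necessity, suppose some primitive cycle $c$ is not gentle-bounded. I would then build a maximal quadratic $J$ in which $c$ is relation-free:
\begin{itemize}
\item at each non-gentle vertex on $c$, choose the alternative that keeps the consecutive arrows of $c$ composable;
\item extend to a maximal quadratic ideal by Zorn's lemma, or simply by finiteness.
\end{itemize}
The failure of gentle-boundedness guarantees that no length-two relation along $c$ is forced. The cycle $c$ also cannot be relation-free in $A$, because $A$ is finite-dimensional, so some path along $c^m$ lies in $I$; the puncture of $c$ is therefore red. The main obstacle is this construction. One must check that the local choices at different non-gentle vertices on $c$ are compatible, which matters especially when $c$ passes through the same vertex twice, and that the result is still a saturated cover. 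I would handle compatibility vertex by vertex using the local case analysis of Types (I)–(III), which is the same analysis that produces the three conditions of Lemma~\ref{three-conditions}.
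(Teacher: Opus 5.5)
\textbf{Sufficiency} is essentially the paper's Proposition~\ref{pro:cycle}. A gentle-bounded cycle has a length-two subpath in $I$ at a gentle vertex, or at a Type~(II) vertex other than the unique deleted relation. That subpath lies in every saturated cover, so no cycle is $J$-permitted.

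\textbf{Necessity has a real gap.} Your plan is to take the given non-gentle-bounded primitive cycle $c$ and choose, vertex by vertex, the local deletion that keeps $c$ composable. For an arbitrary such $c$ this is impossible, not merely delicate. In Example~\ref{ex:cycle-every surface} we have $c\colon 1\to 2$, $a,b\colon 2\to 1$ and $I=\langle ac,bc,ca,cb\rangle$. The closed walk $cacb$ is primitive and not gentle-bounded: all four of its cyclic length-two subpaths lie in $I$ at the Type~(III) vertices $1$ and $2$. Making it permitted would require deleting both $ac$ and $bc$ at vertex~$1$, which no admissible deletion does; each of $J_1,\dots,J_4$ listed there contains $ac$ or $bc$. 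The same obstruction occurs whenever $c$ crosses a Type~(I) vertex via $ac$ and $ad$, or a Type~(III) vertex via both of its arrows on one side. No local case analysis repairs this; you have to change the cycle.

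The missing idea, used in Lemma~\ref{lem:no-puncture}, is to take a non-gentle-bounded primitive cycle of \emph{minimal length} and show it has no repeated arrow. Suppose $a_i=a_j$ with cyclic distance $j-i>0$ minimal. Then $a_i\cdots a_{j-1}$ is a shorter closed walk. It is primitive by minimality of $j-i$. Its cyclic length-two subpaths, including the closing one $a_{j-1}a_i=a_{j-1}a_j$, all occur in $c$, so it is still not gentle-bounded, a contradiction. Once there is no repeated arrow:
\begin{itemize}
\item by \ref{S1}, $c$ passes each vertex at most twice, and a Type~(III) vertex at most once;
\item two passages through a Type~(I) vertex use disjoint arrow pairs, so they require exactly $\{ac,bd\}$ or $\{ad,bc\}$, both of which are admissible;
\item gentle and Type~(II) vertices impose nothing beyond what non-gentle-boundedness already gives.
\end{itemize}

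Two smaller points. First, every $2$-relation has a unique middle vertex, and each admissible local choice is already maximal at its vertex. So the resulting datum directly gives a saturated cover. That is precisely why your step ``extend to a maximal ideal'' cannot re-add a relation along $c$, and you should say so. Second, your dictionary in step one is off. In this framework red punctures are in bijection with cyclic classes of primitive $J$-permitted cycles, as recalled at the start of Section~\ref{section:String algebras over geometric models of red punctures}. There is no ``genuine uncoloured'' puncture attached to a relation-free cycle, and your criterion ``some power lies in $I$'' holds automatically because $A$ is finite-dimensional. Your sufficiency paragraph should simply conclude that $(Q,J)$ has no permitted cycle, so $\mathbf{k}Q/J$ is finite-dimensional and the model has no red puncture.
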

The third main result of this paper gives sufficient conditions for a string algebra under which all its geometric models contain red punctures, and provides a combinatorial characterization of those algebras admitting a common red puncture across all geometric models. For this purpose, we introduce three special subquiver structures:\textit{essential cycle}, \textit{shuttle cycle}, and \textit{multi‑coupled cycle} (see Definitions \ref{def:essential_cycle}, \ref{def:shuttle_cycle}, and \ref{def:multi-coupled-cycle}).

\begin{theoremA}[Theorem \ref{thm:common-red-essential} and \ref{thm: all-red puncture}]\label{thm:C}
Let \(A=\mathbf{k}Q/I\) be a string algebra. 
\begin{itemize}
    \item[(1)] All geometric models of \(A\) have a common red puncture if and only if \(Q\) contains an essential cycle.
    \item[(2)] If \(Q\) contains an
essential cycle, a shuttle cycle, or a multi-coupled cycle, then every geometric model of \(A\) contains a red puncture.
\end{itemize}
\end{theoremA}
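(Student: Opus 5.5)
The plan is to work entirely through the correspondence of \cite{BC24} between saturated labelled tiled surfaces of $A=\mathbf{k}Q/I$ and saturated locally gentle covers $\mathbf{k}Q/I'$, where $I'\subseteq I$ is a maximal quadratic ideal generated by length-two relations of $I$. I will use the standard description of red punctures in the dual cellular dissection of \cite{PPP19}: a red puncture sits inside a cell (tile) whose boundary has no boundary segment. Such a tile corresponds to an oriented cycle $c=\alpha_1\cdots\alpha_n$ in $Q$ such that $\alpha_{i+1}\alpha_i\notin I'$ for every $i$ (indices mod $n$). In other words, the cycle has no relations in the cover, so it is infinite in $\mathbf{k}Q/I'$, whereas in $A$ it is cut by the labels. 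Both parts of the statement therefore become questions about which oriented cycles can avoid all quadratic relations of some, or every, maximal quadratic ideal $I'$. I take this dictionary from the earlier sections as given.

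For (1), I first use the definition of an essential cycle (Definition \ref{def:essential_cycle}). I expect it to say that the cycle is primitive, that at each consecutive pair $\alpha_{i+1}\alpha_i$ no length-two relation of $I$ occurs, and that this cannot be forced by maximality. The second point should mean: no vertex on the cycle is a non-gentle vertex whose choice could insert a relation $\alpha_{i+1}\alpha_i$ into $I'$. I will verify that for every maximal quadratic $I'$ the composites $\alpha_{i+1}\alpha_i$ stay outside $I'$. Since $I'\subseteq I$ consists of quadratic relations of $I$ and none of these lies on the cycle, this follows from the string-algebra axioms at each vertex. The cycle therefore bounds a puncture tile in every model. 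To see that this puncture is the \emph{same} one in every model, I identify the tile by its boundary arrows, i.e.\ the cycle itself, and not by the surface.

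For the converse of (1), assume there is a common red puncture $p$. In each model it is bounded by some cycle $c_{I'}$, and commonality forces this cycle to be independent of $I'$. I then argue by contradiction. Suppose $c$ is not essential. Then at some vertex the defining condition fails, and I can choose the saturation differently there, via the non-gentle vertex case analysis (Types (I)--(III) and Lemma \ref{three-conditions}), to place a relation $\alpha_{i+1}\alpha_i$ in $I'$. This destroys the tile in that model, contradicting commonality.

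For (2), essential cycles are already handled by (1). For a shuttle cycle (Definition \ref{def:shuttle_cycle}) I expect two cycles sharing a segment with a non-gentle vertex. Any maximal choice at that vertex leaves one of the two cycles relation-free in $I'$. This is a case split over the finitely many possible saturations at the shared vertex: each choice of which composite enters $I'$ kills at most one cycle, and maximality together with the string condition (at most one relation through each composite) forbids killing both. For multi-coupled cycles (Definition \ref{def:multi-coupled-cycle}) I generalise this by pigeonhole. There are $k$ cycles coupled at vertices, and each vertex choice can obstruct only a bounded number of them, so a counting argument over the coupling graph should show that some cycle survives in every $I'$ and yields a red puncture.

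The main obstacle is the converse of (1): showing that failure of essentiality can always be exploited by a single local change of saturation, without that change producing a new puncture bounded by the same arrows. I would first try to make the modification local at one non-gentle vertex and check, using Lemma \ref{three-conditions}, that the resulting quadratic ideal is still maximal.
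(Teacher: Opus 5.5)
\textbf{Part (1): right structure, wrong definition.} Your plan for (1) has the same shape as the paper's proof. Forward, every length-two subpath of the cycle avoids every saturated cover; conversely, a non-stable vertex lets some cover contain a subpath. But it rests on a wrong reading of Definition~\ref{def:essential_cycle}.

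A stable vertex may be of Type~(II) with $a_{i-1}a_i\in I$, provided $a_{i-1}a_i$ is exactly the relation deleted there (condition~\ref{E2}). Your sufficiency step (``none of these relations lies on the cycle'') skips this case. It needs the observation that the deletion at a Type~(II) vertex is forced, so $a_{i-1}a_i\notin J_\delta$ for every $\delta\in\mathfrak D(A)$.

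Worse, with your reading the converse is false. A cycle through such a vertex is permitted in every cover and so gives a common red puncture, yet it is not ``essential'' in your sense. Your device of choosing the saturation differently is unavailable there, because a Type~(II) vertex admits only one local deletion.

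With the correct definition the converse is a direct case split, as in the paper:
\begin{itemize}
\item If $a_{i-1}a_i\in I$ and $v_i$ is gentle, or of Type~(II) with $a_{i-1}a_i$ not the deleted relation, then $a_{i-1}a_i\in J_\delta$ for all $\delta$.
\item If $v_i$ is of Type~(I) or~(III), every passage through $v_i$ is a relation of $I$, and one of the two local choices keeps it in $J_\delta$.
\end{itemize}
Choices at different vertices are independent, since each $2$-relation has a unique middle vertex. So Lemma~\ref{three-conditions}, which concerns uniqueness rather than maximality, plays no role. The obstacle you anticipate also does not arise. By Definition~\ref{def:common-red-puncture}, a common red puncture is one fixed primitive cycle that is $J_\delta$-permitted for all $\delta$, so one subpath lying in one $J_\delta$ already gives the contradiction.

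\textbf{Part (2): a genuine gap.} A shuttle cycle is not a pair of cycles sharing a segment. It is a chain $v_1\rightleftarrows v_2\rightleftarrows\cdots\rightleftarrows v_n$ of antiparallel pairs with stable ends and possibly many non-gentle intermediate vertices, so there is no single shared vertex to case-split on.

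More seriously, your key claim that each local choice ``kills at most one cycle'' is false. Take $n=3$ with $v_2$ of Type~(I). Deleting $\{a_1a_2,b_2b_1\}$ keeps $a_1b_1$ and $b_2a_2$ in $J$, which kills both $2$-cycles at once. The red puncture of that model comes from the long walk $a_1a_2b_2b_1$. The same happens for coupled cycles through Type~(I) shared vertices: in Example~\ref{ex:cycle-every surface1}, $B_3$ has the single puncture ${}^\infty(adbc)^\infty$, while neither $ad$ nor $bc$ survives. So the surviving cycle is generally not among a predetermined list, and no counting or pigeonhole argument over named cycles can work.

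The missing idea is closure under permitted successors. Fix any locally gentle cover $J$.
\begin{itemize}
\item At a vertex both of whose outgoing arrows lie in the configuration, \ref{G1} and \ref{S2} for $J$ give every incoming arrow exactly one $J$-permitted continuation.
\item At the remaining vertices (the ends of a shuttle, the non-shared vertices of the skeletons), stability supplies the continuation along the cycle.
\end{itemize}
Thus the finite arrow set of the configuration is closed under taking a permitted successor. Iterating gives an infinite $J$-permitted walk inside it. The segment between the first two occurrences of a repeated arrow is then a primitive $J$-permitted cycle, and so a red puncture. This is precisely Proposition~\ref{prop:shuttle_cycle}, and Lemma~\ref{lem:cou_ske_con} with Proposition~\ref{prop:multi_coupled}.
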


\section*{Acknowledgments}
We are grateful to our supervisor, Prof. Wen Chang, for his  guidance and assistance during our weekly seminars. 
We are also thankful to K. Baur, R. Coelho Simões and B. Dequene for their suggestions on this topic. 
Zheng Xin would like to thank R. Coelho Simões for her help in answering questions about \cite[Example 4.13]{BC24}.
Zheng Xin also thanks Ping He and  Zixu Li for answering his questions on geometric models and for further insightful discussions. 
In addition, Zheng Xin is grateful to M. P. Fonseca for many helpful explanations on fundamental properties of string algebras.

\section{String algebras and their geometric realization}\label{String algebras and their geometric realization}

Throughout this paper, we assume that $\mathbf{k}$ is an algebraically closed field. A \textit{quiver} $Q = (Q_0, Q_1, s, t)$ is a directed graph, which we always assume to be finite and connected, with $Q_0$ the set of vertices, $Q_1$ the set of arrows, and $s, t : Q_1 \to Q_0$ two functions, sending an arrow to its start and target respectively. A \textit{loop} at a vertex $v \in Q_0$ is an arrow $\varepsilon \in Q_1$ with $s(\varepsilon) = t(\varepsilon)= v$. 
A \emph{path} in \( Q \) of length \( n \geq 1 \) is a sequence \(p = a_1 \cdots a_n \) of arrows such that \( t(a_i) = s(a_{i+1}) \) for \( 1 \leq i \leq n - 1 \). The \emph{inverse} of the path $p$ is \(p^{-1} = a_n^{-1} \dots a_1^{-1}\), which satisfies \(s(a_i^{-1}) = t(a_i) = s(a_{i + 1}) = t(a_{i + 1}^{-1})\) for \(1 \leq i \leq n - 1\). 

An \emph{oriented closed walk} is a positive-length path $c=\alpha_1\alpha_2\cdots\alpha_n$ satisfying $t(\alpha_n)=s(\alpha_1)$. It is \emph{primitive} if it is not a proper power \(q^m\) of a strictly shorter oriented closed walk \(q\), where \(m>1\). This condition is independent of the ideal. Two primitive oriented closed walks are \emph{cyclically equivalent} if one is obtained from the other by a cyclic rotation. A quiver is \emph{acyclic} if it contains no oriented closed walk.

By \( \mathbf{k}Q \) we denote the \emph{path algebra} of \( Q \). An ideal \( I \) of \( \mathbf{k}Q \) is called \emph{admissible} if \( R_Q^m \subseteq I \subseteq R_Q^2 \) for some \( m \geq 2 \), where \( R_Q \) denotes the ideal of \( \mathbf{k}Q \) generated by \( Q_1 \). The quotient algebra \( \mathbf{k}Q / I \) is finite dimensional whenever \( I \) is admissible. 
The finite dimensional algebra $A = \mathbf{k}Q / I$ is said to be \emph{monomial} if $I$ is generated by paths of length at least two.
We refer to a path in $I$ of length $s$ a $s$-relation. 
A primitive oriented closed walk \(c=a_1\cdots a_n\) is called a \emph{\(J\)-permitted cycle} for a quadratic ideal $J$ whenever \(a_ia_{i+1}\notin J\) for each \(1 \le i \le n\), with indices taken modulo $n$.

\subsection{String algebras}
We recall the definition of string algebras. 
\begin{definition}\cite{BR87}\label{definition:string algebras}
	We call an algebra $A=\mathbf{k}Q/I$ a \emph{string algebra}, if $Q$ is a quiver and $I$ is an admissible ideal of $\mathbf{k}Q$ satisfying the following conditions:
	\begin{enumerate}[label=(\arabic*), label={(S\arabic*)}]
		\item Each vertex in $Q_0$ is the source of at most two arrows and the target of at most two arrows.\label{S1}
		
		\item For each arrow $a$ in $Q_1$, there is at most one arrow $b$ such that  $ab\notin I$; at most one arrow $c$ such that  $ ca\notin I$.\label{S2} 
		
		\item $I$ is generated by paths of length at least two, i.e. $A$ is monomial. \label{S3}
	\end{enumerate}
\end{definition}

\begin{definition}\label{definition:gentle algebras}
	An algebra $A = \mathbf{k}Q/I$ is called \emph{gentle} if it is a string algebra satisfying the following additional conditions:
	\begin{enumerate}[label=(\arabic*), label={(G\arabic*)}]
		\item For each arrow $a$ in $Q_1$, there is at most one arrow $b'$ such that $ab'\in I$; at most one arrow $c'$ such that $c'a\in I$. \label{G1}
		
		\item $I$ is generated by paths of length two.\label{G2}
	\end{enumerate}
\end{definition}

A vertex $v$ satisfying \ref{S1} and for which the paths of length 2 going through $v$ satisfy
\ref{S2} and \ref{G1} is called a \emph{gentle vertex}. Otherwise, it is called a \emph{non-gentle vertex}, see Definition \ref{def:non-gentle vertices}.

\begin{definition}\label{definition:locally gentle algebras}
	A \emph{locally gentle algebra} is an algebra which admits a presentation $\mathbf{k}Q/I$ satisfying the conditions in Definition \ref{definition:gentle algebras} but which is not necessarily finite dimensional, i.e. there may not exist \( m \geq 2 \) such that \( R_Q^m \subseteq I\).
\end{definition}

\begin{example}\label{ex:string-gentle algebra}
	Consider the quiver $Q$
	\begin{center}
		\begin{tikzcd}
			1 \arrow[rr, "a"] &                   & 2 \arrow[ld, "b"] \arrow[r, "d"] & 4 \\
			& 3 \arrow[lu, "c"] &                                  &  
		\end{tikzcd}
	\end{center}
	and the following ideals:
	\begin{center}
		$I_{1}=<bca,ad,ab>, I_{2}=<ad> , I_{3}=<ab>$.
	\end{center}
	Then $\mathbf{k}Q/I_{1}$ is a string algebra which is not gentle nor locally gentle. The algebra $\mathbf{k}Q/I_{2}$ is locally gentle but not gentle, and $\mathbf{k}Q/I_{3}$ is gentle.
	
\end{example}

In (skew-)gentle algebras, each vertex has at most one loop, see \cite[Lemma 1.2]{HZZ23}. By contrast, string algebras allow a vertex to carry up to two loops, as shown below.

\begin{proposition}\label{pro:string_loops}
    Let $A = \mathbf{k}Q/I$ be a string algebra. For any vertex $v \in Q_0$, there are at most two distinct loops at $v$. Furthermore, if there are exactly two loops at $v$, then there are no other arrows in $Q_1$ starting or ending at $v$. Additionally, for any loop $\epsilon$, there exists an integer $m \ge 2$ such that $\epsilon^m \in I$.
\end{proposition}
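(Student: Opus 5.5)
The proof will be a direct argument using only the axioms \ref{S1}--\ref{S3} and the admissibility of $I$. We will handle the three assertions in the order: the $\epsilon^m\in I$ claim, the bound on loops, and the isolation of a vertex with two loops.

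\emph{Nilpotency of loops.} Let $\epsilon$ be a loop at $v$. Since $I$ is admissible, there is $m\ge 2$ with $R_Q^m\subseteq I$. The path $\epsilon^m$ has length $m$ and therefore lies in $R_Q^m\subseteq I$. This step is immediate. It also shows that the subalgebra generated by a loop is finite dimensional, which is used implicitly below.

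\emph{At most two loops.} Each loop at $v$ starts at $v$. By \ref{S1}, at most two arrows start at $v$, so there are at most two loops at $v$.

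\emph{Two loops force isolation.} Suppose $\epsilon_1\neq\epsilon_2$ are loops at $v$. Each is an arrow starting at $v$ and an arrow ending at $v$. By \ref{S1}, these two loops already exhaust the (at most two) outgoing and the (at most two) incoming arrows at $v$. Hence no other arrow starts or ends at $v$. Connectedness of $Q$ then forces $Q_0=\{v\}$ and $Q_1=\{\epsilon_1,\epsilon_2\}$, though this is not needed for the statement.

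\emph{Main obstacle.} There is no real obstacle. One might expect to need \ref{S2} to rule out exotic configurations, but the statement as written follows from \ref{S1} and admissibility alone. \ref{S2} would be relevant only for finer claims, for instance that $\epsilon_1\epsilon_2$ or $\epsilon_2\epsilon_1$ lies in $I$, or that $\epsilon_i^2\in I$ for at least one $i$. If a remark is wanted, one can add that by \ref{S2} at most one of $\epsilon_1\epsilon_1$ and $\epsilon_1\epsilon_2$ lies outside $I$, and similarly for $\epsilon_2$. This explains why such two-loop quivers, for example with relations $\epsilon_1^2,\epsilon_2^2$, are genuinely string but not gentle when extra relations are forced. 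The contrast with \cite[Lemma 1.2]{HZZ23} comes exactly from \ref{G1} failing in this situation.
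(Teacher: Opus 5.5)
Your proof is correct and follows essentially the same route as the paper: admissibility gives $\epsilon^m\in I$ directly, \ref{S1} bounds the number of loops at $v$, and \ref{S1} forces $v$ to be isolated when it carries two loops. The paper counts both in- and out-degree for the bound, while you use only the out-degree, which already suffices.

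One correction to your closing remark, which is not part of the proof: in your own example with $\epsilon_1^2,\epsilon_2^2\in I$ plus forced longer relations, \ref{G1} holds at $v$ and it is \ref{G2} that fails. So the contrast with \cite[Lemma 1.2]{HZZ23} does not come "exactly" from \ref{G1} failing.
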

\begin{proof}
Let $v \in Q_0$ be a vertex. By the definition of a string algebra, $A$ satisfies \ref{S1}, which requires that every vertex in $Q_0$ is the source of at most two arrows and the target of at most two arrows. 

A loop $\varepsilon$ at $v$ satisfies $s(\varepsilon) = t(\varepsilon) = v$. Therefore, each loop at $v$ contributes exactly one to the out-degree of $v$ and exactly one to the in-degree of $v$. Suppose there are $k$ distinct loops at $v$. These loops collectively contribute $k$ to both the out-degree and the in-degree of $v$. By  \ref{S1}, we must have $k \le 2$. Thus, there are at most two loops at $v$.

If there are exactly two distinct loops at $v$ (i.e., $k = 2$), the out-degree and in-degree of $v$ contributed by these loops are already exactly two. Since the maximum out-degree and in-degree permitted by \ref{S1} are both two, no additional arrows can have $v$ as their source or target. Hence, no other arrows can enter or leave $v$.

Finally, since the ideal $I$ is admissible, there exists an integer $N \ge 2$ such that $R_Q^N \subseteq I$. For any loop $\varepsilon$, the path $\varepsilon^N$ has length $N$, which implies $\varepsilon^N \in I$. Because $I$ is a  ideal generated by paths, the generating relation contained within $\varepsilon^N$ must be of the form $\varepsilon^m$ for some integer $2 \le m \le N$. Therefore, $\varepsilon^m \in I$.
\end{proof}

\subsection{Geometric models of string algebras }\label{subsection: geo-module categories}

In \cite{BC24}, the authors realize the string algebras as well as their module categories by using labeled tiled surfaces, which we will recall in this subsection.

\begin{definition}\label{def:marked_surface}
A marked surface is a pair $(\mathcal{S}, \mathcal{M})$, where

(1) $\mathcal{S}$ is an oriented surface with (possibly empty) boundaries with connected components $\partial\mathcal{S}=\sqcup_{i=1}^{b}\partial_{i}\mathcal{S}$;

(2) $\mathcal{M}=\mathcal{M}_{\circ}\cup\mathcal{P}_{\circ}\cup\mathcal{M}_{\rpoint}\cup\mathcal{P}_{\rpoint}$ is a finite set of marked points on $\mathcal{S}$. We refer to the points in $\mathcal{M}_{\circ}\cup\mathcal{P}_{\circ}$ (represented by the symbol $\circ$) as \emph{white marked points}, and the points in $\mathcal{M}_{\rpoint}\cup\mathcal{P}_{\rpoint}$ (represented by the symbol $\rpoint$) as \emph{red marked points}. Each connected component $\partial_{i}\mathcal{S}$ contains at least one marked point of each color from $\mathcal{M}_{\circ}\cup\mathcal{M}_{\rpoint}$, with the white points and the red points alternatingly appearing. The elements in $\mathcal{P}_{\circ}\cup\mathcal{P}_{\rpoint}$ are in the interior of $\mathcal{S}$. Specifically, the interior red points $p \in \mathcal{P}_{\rpoint}$, are formally called \textit{red punctures}.
\end{definition}

We will often simply write $\mathcal{S}$ for $(\mathcal{S},\mathcal{M})$. Let $\mathcal{S}$ be a marked surface. An $\rpoint$-\textit{arc} $\gamma$ is a non-contractible curve with endpoints in $\mathcal{M}_{\rpoint}\cup\mathcal{P}_{\rpoint}$.
On the surface, all $\rpoint$-arcs are considered up to homotopy with respect to the boundary components and the punctures.

\begin{definition}\label{def:dissection}
  A collection \( \mathsf{P} \) of $\rpoint$-arcs is called a \textit{dissection} of \( \mathcal{S} \) if the $\rpoint$-arcs have no interior intersections and they cut the surface into polygons, each of which contains exactly one $\bpoint$-point from $\mathcal{M}_{\circ}\cup\mathcal{P}_{\circ}$, see Figure \ref{fig:tiles}.
  If \( \mathsf{P} \) is a dissection of \( \mathcal{S} \), the pair \( (\mathcal{S}, \mathsf{P}) \) is called a \textit{tiled surface}.
  \begin{figure}[htbp]
	
	\begin{tikzpicture}[>=stealth,scale=0.7]
		
		\draw [line width=4pt, gray!50] (-6,-0.1) to (-3,-0.1);
		\draw [line width=2pt, black] (-6,0) to (-3,0);
		\draw [line width=4pt, gray!50] (-1.3,-0.1) to (1.3,-0.1);
		\draw [line width=2pt, black] (-1.3,0) to (1.3,0);

		\draw[red, line width=1pt] (-6,0) to[out=90, in=-30](-7,1.5);
        \draw[red, line width=1pt,dashed](-7,1.5)to[out=30, in=-120] (-5.5,3);
        \draw[red, line width=1pt](-5.5,3)to[out=-20,in=-160](-3.8,3);
        \draw[red, line width=1pt,red,dashed](-3.8,3)to[out=-80,in=160](-2.5,1.5);
        \draw[red, line width=1pt] (-2.5,1.5) to[out=-120, in=90](-3,0) ;
		
		\draw[red, line width=1pt] (-1.3,0) to[out=60, in=120](1.3,0)  ;
		
		\draw[red, line width=1pt] (6,0) to (6.5,1.5);
        \draw[red, line width=1pt](6.5,1.5) to[out=160, in=-80] (5.5,3);
        \draw[red, line width=1pt,dashed](5.5,3)to(3.8,3);
        \draw[red, line width=1pt](3.8,3)to[out=-100,in=45](3,1.5);
        \draw[red, line width=1pt](3,1.5) to (4,0);
        \draw[red, line width=1pt,dashed](4,0) to(6,0) ;
		
		\draw[red, line width=1pt,red] (9,0) to[out=140, in=180](9,2.5)  to[out=0, in=40](9,0)  ;

		\draw[thick,black, fill=white ] (-4.5,0) circle (0.1);
		\draw[thick,black, fill=white ] (0,0) circle (0.1);
		\draw[thick,black, fill=white ] (5,1) circle (0.1);
		\draw[thick,black, fill=white ] (9,1.3) circle (0.1);

		\draw[thick,red, fill=red ] (-6,0) circle (0.1);
		\draw[thick,red, fill=red ] (-3,0) circle (0.1);
		\draw[thick,red, fill=red ] (-1.3,0) circle (0.1);
		\draw[thick,red, fill=red ] (1.3,0) circle (0.1);
		\draw[thick,red, fill=red ] (4,0) circle (0.1);
		\draw[thick,red, fill=red ] (6,0) circle (0.1);

		\draw[thick,red, fill=red ] (-7,1.5) circle (0.1);
		\draw[thick,red, fill=red ] (-5.5,3) circle (0.1);
		\draw[thick,red, fill=red ] (-3.8,3) circle (0.1);
		\draw[thick,red, fill=red ] (-2.5,1.5) circle (0.1);
		\draw[thick,red, fill=red ] (5.5,3) circle (0.1);
		\draw[thick,red, fill=red ] (3.8,3) circle (0.1);
		\draw[thick,red, fill=red ] (3,1.5) circle (0.1);
		\draw[thick,red, fill=red ] (6.5,1.5) circle (0.1);

		\draw[thick,red, fill=red ] (9,0) circle (0.1);
		
		\draw (-4.5,-0.7) node {$(i.a)$};
		\draw (0,-0.7) node {$(i.b)$};
		\draw (5,-0.7) node {$(ii.a)$};
		\draw (9,-0.7) node {$(ii.b)$};
	\end{tikzpicture}
	
	\caption{The  types of polygons in a dissection of Definition \ref{def:dissection}. Figure $(i.b)$ is the special case of a polygon with boundary segments and of size $3$. Figure $(ii.b)$ is the special case of a polygon of size $1$.}
	\label{fig:tiles}
	
\end{figure}
\end{definition}

Let $p \in \mathcal{M}_{\rpoint}\cup\mathcal{P}_{\rpoint}$. If $p \in \mathcal{M}_{\rpoint}$, let $p'$ and $p''$ be two points in the same boundary component such that $p', p'' \not\in \mathcal{M}$ and $p$ is the only marked point in $\mathcal{M}$ lying in the boundary segment between $p'$ and $p''$. Consider a curve $\delta$ isotopic to this boundary segment but such that the only intersection with the boundary of $\mathcal{S}$ is at its endpoints. If $p \in \mathcal{P}_{\rpoint}$, consider a closed simple curve $\delta$ in the interior of $\mathcal{S}$ around $p$ and with no intersections with $\mathcal{M}$.
The {\it complete fan at $p$} is defined to be the sequence of all arcs of $\mathsf{P} $ that $\delta$ crosses in the clockwise order. A {\it fan at $p$} is any subsequence of consecutive arcs of the complete fan at $p$.
Any two consecutive crossings of the curve $\delta$ with arcs in $\mathsf{P}$ define a triangle whose vertices are these two crossings and the marked point $p$; see Figure~\ref{fig:angle}. This triangle is called an {\it angle at $p$} (also called {\it angle of $\mathsf{P} $}), and we say the curve $\delta$ {\it cuts this angle at $p$}.
\begin{figure}[htbp]
    \centering
\begin{tikzpicture}[scale=1.2]
   \fill[gray!50] (-1.5,0) rectangle (1.5,0.1);
   \draw[line width=1.5pt] (-1.5,0) -- (1.5,0);
    \node at (-1,0.25) {$p'$};
    \node at (1,0.25) {$p''$};
     \draw[green, fill=green!10]
        (0,0) -- (0.6,-0.6)
        arc(-45:-90:0.85)
        -- cycle;
     \node[right] at (0.7,-0.5) {$\delta$};
     \draw[blackarrow](0.5,-0.8)to(0.1,-1);
     \draw[red] (0,0) -- (1,-1) ;
     \draw[red] (0,0)--(-1,-1) ;
     \draw[red] (0,0)--(0,-1.35) ;
    \draw[black!80] (0,0) -- (0.85,0) arc(0:-180:0.85) -- cycle;
     \fill[red] (0,0) circle (2pt) node[above] {$p$};
\end{tikzpicture}
\hspace{4em}
\begin{tikzpicture}[scale=1.15]
     \node[right] at (0.6,-0.8) {$\delta$};
    \draw[green, fill=green!10] (0,0) -- (1,0) arc(0:-60:1) -- cycle;
    \draw[blackarrow](1.05,-0.3)to(0.9,-0.6);
    \draw[red] (0,0) -- (1.25,0) ;
    \draw[red] (0,0) -- (0.6,-1) ;
    \draw[red] (0,0) -- (-0.8,1) ;
    \draw[red] (0,0) -- (-0.2,-1.1);
    \fill[red] (0,0) circle (2pt) node[above right] {$p$};
    \draw [black!80](0,0) circle (1);
\end{tikzpicture}
\hspace{4em}
\begin{tikzpicture}[scale=1.15]
    \draw[black!80, fill=green!10] (0,0) circle (1);
    \fill[red] (0,0) circle (2pt) node[above] {$p$};
    \draw[red] (0,0) -- (0,-1.2) ;
    \draw[blackarrow](-0.9,-0.6)to(-1.05,-0.3);
    \node[right] at (0.7,-0.7) {$\delta$};
\end{tikzpicture}
    \caption{The shaded triangle is an angle at $p$. A puncture $p \in \mathcal{P}_{\rpoint}$ can have a unique angle which is a self-folded triangle, see the rightmost picture.}
    \label{fig:angle}
\end{figure}

\begin{definition}
 Let \( (\mathcal{S}, \mathsf{P}) \) be a tiled surface. We define \( Q := Q(\mathcal{S}, \mathsf{P}) \) to be the quiver defined as follows:
 
(1) The vertices in $Q$ are in bijection with the arcs in $\mathsf{P}$.

(2) The arrows in $Q$ are in bijection with the angles at the marked points in $ \mathcal{M}_{\rpoint}\cup\mathcal{P}_{\rpoint}$. In other words, there is an arrow $a\colon x_i \to x_j$ in $Q$ if and only if the corresponding arcs ${x}_i$ and ${x}_j$ share an endpoint $p_a \in \mathcal{M}_{\rpoint}\cup\mathcal{P}_{\rpoint}$ and ${x}_j$ is the immediate successor of ${x}_i$ in the complete fan at $p_a$.

\end{definition}
  
\begin{definition}\label{def:label}
    Let \( (\mathcal{S}, \mathsf{P}) \) be a tiled surface.

(1) A {\it label} at a point $p\in \mathcal{M}_{\rpoint}\cup\mathcal{P}_{\rpoint}$ is a finite fan at $p$ of length $\geq 3$.

(2) Given two labels $\ell$ and $\ell'$, if $\ell$ is a subsequence of $\ell'$, then we say the label $\ell'$ is {\it redundant}.

\end{definition}

\medskip
\noindent\textbf{Convention.}
Through this paper, all geometric models are assumed to be \emph{reduced}: their labels are non-redundant and correspond to minimal monomial generators. Accordingly, we omit the adjective “reduced” from our terminology.

\begin{notation}
We will draw a label $\ell$ as a blue curve $\delta$ around the corresponding marked point with an orientation such that $\delta (0)$ is a point in the first arc of $\ell$, $\delta (1)$ is a point in the last arc of $\ell$ and if $\ell$ contains an arc ${x}$ multiple times, the corresponding intersections of $\delta$ with arc ${x}$ are at distinct points. This is in order to make it clear what the label is from $\delta$, especially in the case when the label starts and ends at the same arc, see Example~\ref{ex:not unique} and Figure~\ref{fig:non-homotopic}. In particular, let \( q \in \mathcal{P}_{\rpoint} \). If labels exist at \( q \) but the number of labels is unspecified, we represent them by blue dashed curves, see Figure \ref{fig: shuttle cycles} and Figure \ref{fig:geo-sing algebra}.
\end{notation}
\begin{definition}\label{def:tiled surface}

Let $Q = Q(\mathcal{S}, \mathsf{P})$ and consider the path algebra $\mathbf{k}Q$. 
We define $I_{\mathsf{P},\mathsf{L}}$ to be the ideal of $\mathbf{k}Q$ generated by the paths (see Figure \ref{fig:relation}):

\begin{enumerate}[label=(\text{R}\arabic*), ref=(\text{R}\arabic*)]
    \item \label{rel:R1} $ab$ of length 2 such that $p_a \neq p_b$ or $p_a = p_b$ and $t(a) = s(b)$ corresponds to a \textit{loop arc}.
    
    \item \label{rel:R2} linearly oriented paths in $Q$ corresponding to a label in $\mathsf{L}$.
\end{enumerate}
 
Relations of type \ref{rel:R1} come from the tiles in \( (\mathcal{S}, \mathsf{P}) \), as certain compositions of two arrows within a polygon, and these are exactly the relations used in the construction of (locally) gentle algebras via surfaces. For string algebras, we need also to consider labels and corresponding relations of type \ref{rel:R2}.
 \begin{figure}[htbp]
  \begin{tikzpicture}[>=stealth,scale=1]
   \draw [line width=1pt, red] (0,1.8) to (1,-0.2)to (2,1.8);
   \draw [line width=1pt, red] (3.2,2)to (4.5,-0.2) to (4.5,2.5)to (4.5,-0.2)to (5.8,2);
   \draw [blackarrow] (-7.5,2.15) to (-8,1.7);
   \draw [blackarrow] (-8,0.3) to (-7.5,-0.15);
   \draw [blackarrow] (-4.3,-0.1) to (-4,0.3);
   \draw [blackarrow] (-3,0.3) to (-2.7,-0.1);
   \draw [blackarrow] (0.2,0) to (0.7,0.4);
   \draw [blackarrow] (1.3,0.4) to (1.8,0);
   \draw [bluearrow,bend left=45] (4,0.7) to (5,0.7);

   \draw[line width=1pt,red] (-6.5,-0.5) to(-8,0)to(-8,2)to(-6.5,2.5);
            \draw[line width=1pt,red] (-3.9,1.3) to(-3.5,-0.2)to(-3.1,1.3);
   \draw[bend right, line width=1pt,red] (-5.3,0.8) to(-3.5,-0.2);
   \draw[bend right, line width=1pt,red] (-3.5,-0.2) to(-1.7,0.8);
   \draw[red, line width=1pt,red] (-3.5,-0.2) to[out=160, in=180](-3.5,2)to[out=0, in=20](-3.5,-0.2);
   \draw[red, line width=1pt,red] (1,-0.2) to[out=170, in=-90](-1,1.3)to[out=90, in=180](1,2.8)to[out=0, in=90](3,1.3)to[out=-90, in=10](1,-0.2);
   \draw[thick,red, fill=red ] (-8,0) circle (0.1);
   \draw[thick,red, fill=red ] (-8,2) circle (0.1);
   \draw[thick,red, fill=red ] (-3.5,-0.2) circle (0.1);
   \draw[thick,red, fill=red ] (1,-0.2) circle (0.1);
   \draw[thick,red, fill=red ] (4.5,-0.2) circle (0.1);
   \draw (-3.5,1) node[red] {$\cdots$};
   \draw (4.2,1) node[red] {$\cdots$};
            \draw (4.9,1) node[red] {$\cdots$};
         \draw (1,1) node[red] {$\cdots$};
   \draw (-7.5,1.6) node[font=\scriptsize] {$\mathbf{a}$};
   \draw (-4,0) node[font=\scriptsize] {$\mathbf{a}$};
   \draw (-7.5,0.4) node[font=\scriptsize] {$\mathbf{b}$};
   \draw (-3,0) node[font=\scriptsize] {$\mathbf{b}$};
   \draw (0.2,0.4) node[font=\scriptsize] {$\mathbf{b}$};
   \draw (1.8,0.4) node[font=\scriptsize] {$\mathbf{a}$};
   \draw (-8.4,2.1) node[red,font=\scriptsize] {$\mathbf{p_a}$};
   \draw (-8.4,-0.1) node[red,font=\scriptsize] {$\mathbf{p_b}$};
   \draw (-3.5,-0.6) node[red,font=\scriptsize] {$\mathbf{p_{a}=p_{b}}$};
   \draw (1,-0.6) node[red,font=\scriptsize] {$\mathbf{p_{a}=p_{b}}$};
   \draw (-7,-1.3) node {$(R1:a)$};
   \draw (-3.5,-1.3) node {$(R1:b_{1})$};
   \draw (1,-1.3) node {$(R1:b_{2})$};
   \draw (4.5,-1.3) node {$(R2)$};
  \end{tikzpicture}
  \caption{The ideal \( I_{\mathsf{P},\mathsf{L}} \) of $\mathbf{k}Q$ associated with a tiled surface $(\mathcal{S},\mathsf{P})$, as given in Definition \ref{def:tiled surface}.}
  \label{fig:relation}
 \end{figure}
\end{definition}
\begin{definition}
  Let \( \mathcal{S} = (\mathcal{S}, \mathsf{P}) \) be a tiled surface and let \( Q = Q(\mathcal{S}, \mathsf{P}) \). Let \( \mathsf{L} \) be a finite set of labels on \( \mathcal{S} \) such that for every \( q \in \mathcal{P}_{\rpoint} \), \( \mathsf{L} \) contains a label at \( q \). The \textit{labelled tiling algebra} \( A_{\mathsf{P},\mathsf{L}} \) associated to the data \( (\mathcal{S}, \mathsf{P}, \mathsf{L}) \) is the bound quiver algebra \( A_{\mathsf{P},\mathsf{L}} = \mathbf{k}Q/I_{\mathsf{P},\mathsf{L}} \).
\end{definition}

\begin{definition}\label{def:saturated_model}
    Let $A = \mathbf{k}Q/I$ be a string algebra. A quadratic  ideal $J \subseteq I$ is a \textit{saturated locally gentle cover} of $A$ if $\mathbf{k}Q/J$ is locally gentle and there is no quadratic  ideal $J^\prime$ such that 
    \[J \subsetneq J^\prime \subseteq I \quad\text{and}\quad \mathbf{k}Q/J^\prime \text{ is locally gentle}.\]
    A labelled tiled surface model is called \textit{saturated} if its underlying locally gentle
bound quiver comes from such a cover. 
In what follows, all labelled tiled surfaces considered in this paper are assumed to be saturated.
\end{definition} 

To rigorously formalize the notion of uniqueness, we introduce the following equivalence relation on labelled tiled surfaces:

\begin{definition}\label{def:equivalence_models}
Two labelled tiled surfaces $(\mathcal{S}, \mathsf{P}, \mathsf{L})$ and $(\mathcal{S}', \mathsf{P}', \mathsf{L}')$ are said to be \emph{equivalent} if there exists an orientation-preserving homeomorphism $h: \mathcal{S} \rightarrow \mathcal{S}'$ satisfying the following conditions:
\begin{itemize}
    \item $h$ preserves the colours of the marked points;
    \item $h$ maps the dissection $\mathsf{P}$ to $\mathsf{P}';$
    \item $h$ maps each ordered label in $\mathsf{L}$ to the corresponding ordered label in $\mathsf{L}'.$
\end{itemize}

Throughout this paper, two geometric models are considered equivalent, and the uniqueness of a geometric model for a string algebra is always understood, up to this equivalence of labelled tiled surfaces.
\end{definition}

The following result shows that the class of labelled tiling algebras coincides with
the class of string algebras.
\begin{theorem}\cite[Theorem 3.1]{BC24}\label{string algebra--labelled tiling algebra}
 Let \( A = \mathbf{k}Q/I \) be a finite-dimensional monomial algebra. The following are equivalent.
 \begin{enumerate}
  \item \( A \) is a string algebra.
  \item \( A = (\mathbf{k}Q/J)/(I/J) \), where \( \mathbf{k}Q/J \) is a locally gentle algebra.
  \item \( A \) is a labelled tiling algebra of a marked surface.
 \end{enumerate}
\end{theorem}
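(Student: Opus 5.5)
The plan is to prove the cycle of implications (1)$\Rightarrow$(2)$\Rightarrow$(3)$\Rightarrow$(1).

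For (1)$\Rightarrow$(2), let $A=\mathbf{k}Q/I$ be a string algebra. Consider the set of quadratic ideals $J\subseteq I$ generated by paths of length two such that $\mathbf{k}Q/J$ is locally gentle.

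\begin{itemize}
\item This set is nonempty. Indeed, $J=0$ is not automatically locally gentle, since \ref{G1} may fail. Instead I build $J$ vertex by vertex. At each vertex $v$, condition \ref{S1} gives at most two incoming arrows $c_1,c_2$ and at most two outgoing arrows $b_1,b_2$. Condition \ref{S2} says that for each $c_i$ at most one $b_j$ has $c_ib_j\notin I$, and dually.
\item I then select a set of relations $c_ib_j\in I$ forming a "matching complement". Concretely, choose a bijection-like pairing $\sigma$ between the incoming and outgoing arrows at $v$. It must extend the pairs that are forced to be non-relations (those $c_ib_j\notin I$). Declare $c_ib_j\in J$ exactly when $b_j\neq\sigma(c_i)$ and both are defined.
\item Such a $\sigma$ exists because the non-relations already form a partial matching by \ref{S2}. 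Any partial matching in a bipartite graph with at most $2+2$ vertices extends to a maximal one.
\item The resulting $J$ satisfies \ref{S2}, \ref{G1} and \ref{G2}, so $\mathbf{k}Q/J$ is locally gentle.
\item Since every generator of $J$ lies in $I$ and $I$ is monomial by \ref{S3}, we get $J\subseteq I$. Hence $A=(\mathbf{k}Q/J)/(I/J)$.
\end{itemize}

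For (2)$\Rightarrow$(3), I invoke the realization of locally gentle algebras by dual cellular dissections of marked surfaces from \cite{PPP19}. This gives a tiled surface $(\mathcal{S},\mathsf{P})$ with $Q(\mathcal{S},\mathsf{P})=Q$, whose type \ref{rel:R1} relations are exactly the generators of $J$.

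It remains to encode $I/J$. Take the minimal monomial generators of $I$ that are not in $J$, which are paths $p=a_1\cdots a_n$ with $n\ge 2$. Every consecutive pair $a_ia_{i+1}$ of such a path is not in $J$. In the gentle dissection this means consecutive arrows share the same marked point and turn around it in the fan order. So $p$ corresponds to a fan at a single red point, and I take it as a label of type \ref{rel:R2}.

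Minimality of generators ensures the label set $\mathsf{L}$ is non-redundant. The quadratic generators of $I$ lie outside $J$ only if they are length-two fans. I expect to handle this by absorbing them into $J$ via the saturation argument, or by allowing length-two fans after adjusting the surface; I will check the label-length convention carefully here.

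The main obstacle in this step is infinite dimensionality. Each red puncture $q$ corresponds to a $J$-permitted cycle, which is an infinite fan, and the definition requires a label at $q$. Because $I$ is admissible, $R_Q^m\subseteq I$. Hence the $J$-permitted cycle around $q$, iterated, lies in $I$. A minimal generator of $I$ contained in it gives the required label at $q$. One must also check that $I_{\mathsf{P},\mathsf{L}}=I$, which holds since both ideals are monomial with the same generators.

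For (3)$\Rightarrow$(1), take $A_{\mathsf{P},\mathsf{L}}$. Each arc has at most two endpoints, and each endpoint contributes at most one incoming and one outgoing angle, which gives \ref{S1}. At each marked point, the fan structure allows at most one non-relation continuation of an arrow, which gives \ref{S2}. The generators of type \ref{rel:R1} and \ref{rel:R2} are paths, which gives \ref{S3}.

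Finite dimensionality of $A$ is assumed in the hypothesis. Admissibility, namely $I\subseteq R_Q^2$, is clear because all generators have length at least two.
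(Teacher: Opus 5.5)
Your overall route is the right one: choose at each vertex which $2$-relations of $I$ to keep so that $J\subseteq I$ is locally gentle (this is the deletion at non-gentle vertices of Remark~\ref{remark:transform non-gentle}), realize $\mathbf{k}Q/J$ by \cite{PPP19}, and record the remaining minimal generators of $I$ as labels. The paper only quotes the result from \cite{BC24}. Your treatment of (1)$\Rightarrow$(2), of the labels at red punctures, and of (3)$\Rightarrow$(1) is essentially sound.

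The step you leave open in (2)$\Rightarrow$(3), however, is not closed by either of the fixes you suggest. Quadratic generators of $I$ outside $J$ cannot be absorbed into $J$. At a Type (I) vertex, \ref{G1} lets a locally gentle $J$ contain at most two of $ac,ad,bc,bd$. At a Type (III.a) vertex it allows at most one of $ac,bc$. So some $2$-relations must survive as labels for every choice of $J$; in Example~\ref{ex:not unique}, one of $ab,ad$ is always a label.

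Nor is any adjustment of the surface needed, because the length of a label counts \emph{arcs}, not arrows. If $ab\in I\setminus J$, then $p_a=p_b$ and $b$ is the angle immediately following $a$. So $ab$ is the fan $(s(a),t(a),t(b))$ of length $3$, which is a legitimate label of type \ref{rel:R2}. A fan of length $2$ is a single arrow and never encodes a relation. With this reading, your encoding $I=J+\langle\text{labels}\rangle=I_{\mathsf{P},\mathsf{L}}$ goes through verbatim.

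There are also three smaller points to fix.
\begin{itemize}
\item For $J=0$ it is \ref{S2}, not \ref{G1}, that fails.
\item In your pairing rule, an incoming arrow left unmatched by $\sigma$ (this happens at Type (III) vertices) must be put in relation with the outgoing arrow. Otherwise \ref{S2} fails for that outgoing arrow. State the rule as ``$c_ib_j\in J$ iff $b_j\neq\sigma(c_i)$, also when $\sigma(c_i)$ is undefined''.
\item In (3)$\Rightarrow$(1), add that a finite-dimensional monomial quotient automatically has $R_Q^m\subseteq I$ for some $m$, because the paths not in $I$ form a basis of $A$.
\end{itemize}
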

By Theorem \ref{string algebra--labelled tiling algebra}, the existence of different choices of locally gentle algebras associated with a string algebra implies that the corresponding surface is not unique in general, see Example \ref{ex:not unique}. 
\begin{example}\label{ex:not unique}
 Consider the quiver $Q$ and the admissible ideal $I_{1}=<bca,ad,ab>$ in Example \ref{ex:string-gentle algebra}. The locally gentle algebras associated with $\mathbf{k}Q/I_1$ admit two distinct choices: the finite-dimensional algebra $B_1 = \mathbf{k}Q/J_1$ with $J_1 = \langle ab\rangle$ and the infinite-dimensional algebra $B_2 = \mathbf{k}Q/J_2$ with $J_2 = \langle ad\rangle$. The geometric model corresponding to $B_{1}$ and $B_{2}$ are illustrated in Figure \ref{fig:non-homotopic}, and are not homotopic to each other.
 \begin{figure}[htbp]
  \begin{center}
   \begin{tikzpicture}[scale=0.35]
    \begin{scope}[shift={(-9,0)}]
     \draw[line width=1.5pt,fill=white] (0,0) circle (6cm);
     \draw[line width=1.5pt,fill=gray!50] (0,0) circle (1cm);
     \path
     (0:1) coordinate (b1)
     (-90:1) coordinate (b2)
     (-180:1) coordinate (b3)
     (-90:6) coordinate (b4)
     (90:6) coordinate (b5)
     (0:6) coordinate (r1)
     (180:6) coordinate (r2)
     (90:1) coordinate (r3);
     \draw[bend left,line width=1pt,red] (b4) to (b3);
     \draw[bend right,line width=1pt, red] (b4) to (b1);
     \draw[red,line width=1pt,red] (b4) to[out=30,in=-90](3.5,0)to[out=90,in=0](0,4)to[out=180,in=90](-3.5,0) to[out=-90,in=150](b4);
     \draw[red,line width=1pt,red] (b4) to[out=10,in=-90](5.2,0)to[out=90,in=-10](b5);

     \draw[blackarrow] (-1.5,-5) to (-0.9,-4.5);
     \draw[blackarrow] (-0.9,-4.5) to (0.9,-4.5);
     \draw[blackarrow] (0.9,-4.5) to (1.5,-5);
     \draw[blackarrow] (1.5,-5) to (2.2,-5.3);
     \draw[bend left,bluearrow] (-2.2,-4) to (2.3,-4);
     \draw[bend left,bluearrow] (1.4,-3) to (4.15,-3.4);

     \draw[thick, red ,fill=red]
     (b1) circle (0.2cm)
     (b3) circle (0.2cm)
     (b4) circle (0.2cm)
     (b5) circle (0.2cm);
     \draw[thick,black, fill=white]
     (r1) circle (0.2cm)
     (b2) circle (0.2cm)
     (r2) circle (0.2cm)
     (r3) circle (0.2cm);
     \draw(-1.8,-1) node[black,font=\scriptsize] {$\mathbf{x_{3}}$};
     \draw(2,-2) node[black,font=\scriptsize] {$\mathbf{x_{1}}$};
     \draw(0,3) node[black,font=\scriptsize] {$\mathbf{x_{2}}$};
     \draw(4.7,0) node[black,font=\scriptsize] {$\mathbf{x_{4}}$};
     \draw(-1.5,-4.2) node[black,font=\scriptsize] {$\mathbf{b}$};
     \draw(0,-4) node[black,font=\scriptsize] {$\mathbf{c}$};
     \draw(1.5,-4.2) node[black,font=\scriptsize] {$\mathbf{a}$};
     \draw(2.2,-4.7) node[black,font=\scriptsize] {$\mathbf{d}$};
    \end{scope}
    \begin{scope}[shift={(6,0)}]
     \draw[line width=1.5pt,fill=white] (0,0) circle (6cm);
     \path
     (0:6) coordinate (b1)
     (-90:6) coordinate (b2)
     (180:6) coordinate (b3)
     (90:6) coordinate (b4)
     (2,0) coordinate (b6)
     (45:6) coordinate (r1)
     (135:6) coordinate (r2)
     (-135:6) coordinate (r3)
     (-45:6) coordinate (r4);
     \draw[line width=1pt,red] (b6) to (b1);
     \draw[line width=1pt,red] (b6) to (b2);
     \draw[line width=1pt,red] (b6) to (b4);
     \draw[line width=1pt,red] (b4) to (b3);

     \draw[bluearrow] (1.9,-0.4) arc[start angle=-120, end angle=-360, radius=0.45] ;
     \draw[bluearrow] (1.75,0.6) arc[start angle=120, end angle=-80, radius=0.75] arc[start angle=-80, end angle=-245, radius=0.85] ;

     \draw[thick,red ,fill=red]
     (b1) circle (0.2cm)
     (b2) circle (0.2cm)
     (b3) circle (0.2cm)
     (b4) circle (0.2cm)
     (b6) circle (0.2cm);
     \draw[thick,black, fill=white]
     (r1) circle (0.2cm)
     (r2) circle (0.2cm)
     (r3) circle (0.2cm)
     (r4) circle (0.2cm);
     \draw(0.25,-3.5) node[black] {\scriptsize$\mathbf{x_{1}}$};
     \draw(0.45,3) node[black] {\scriptsize$\mathbf{x_{2}}$};
     \draw(4.8,0.5) node[black] {\scriptsize$\mathbf{x_{3}}$};
     \draw(-2,3) node[black] {\scriptsize$\mathbf{x_{4}}$};
    \end{scope}
   \end{tikzpicture}
  \end{center}
  \caption{Surfaces that are not equivalent but correspond to the same string algebra $\mathbf{k}Q/I_{1}$ in Example \ref{ex:string-gentle algebra}.}
  \label{fig:non-homotopic}
 \end{figure}
\end{example}

\section{String algebras with unique geometric models}\label{section: string algebras with unique geometric models}

In this section, we characterize a class of special string algebras whose geometric model is unique up to equivalence of labelled tiled surfaces. 

\subsection{Non-gentle vertices} In this subsection, we study non-gentle vertices in detail, which are crucial for classifying the string algebra with a unique surface model described in next subsection. 

\begin{definition}\label{def:non-gentle vertices}
	
Let $A = \mathbf{k}Q/I$ be a string algebra and $v \in Q_0$ a non-gentle vertex. We can delete some relations at $v$ to make it a gentle vertex. This process is referred to as \emph{transforming non-gentle vertices into gentle vertices}. There are exactly three types of non-gentle vertices in a string algebra $A$:
\begin{enumerate}[
    label=\textbf{Type (\Roman*)}, 
    ref=Type \Roman*, 
    leftmargin=2.2cm,        
    labelwidth=2.0cm,        
    align=left               
]
    \item \label{type:I} The vertex $v$ is the target of two arrows $a, b$ and the source of two arrows $c, d$, such that $ac, ad, bc, bd \in I$. 
    
    \item \label{type:II} The vertex $v$ is the target of two arrows $a, b$ and the source of two arrows $c, d$, such that exactly three of the paths $ac, ad, bc, bd$ belong to $I$. For instance, $ac, ad, bd \in I$.
    
    \item \label{type:III} The vertex $v$ has degree three. For Type (III.a), $v$ is the target of two arrows $a, b$ and the source of one arrow $c$, such that $ac, bc \in I$. Type (III.b) is defined symmetrically by reversing the arrows.
\end{enumerate}

\end{definition}

\begin{remark}\label{remark:transform non-gentle}
	 Let $v \in Q_{0}$ be  a non-gentle vertex.

  (1) There are three cases for transforming it into a gentle vertex.
	
	 $\bullet$ If $v$ is a $(\mathrm{I})$-non-gentle vertex, there are two choices to transform it into a gentle vertex: one consists in deleting the relations \(\{ac, bd\}\), the other in removing \(\{ad, bc\}\). 
	
	 $\bullet$ If $v$ is a $(\mathrm{II})$-non-gentle vertex, there is only one choice to transform it into a gentle vertex, namely deleting the relation \(\{ad\}\). 
	
	 $\bullet$ If $v$ is a $(\mathrm{III}.a)$-non-gentle vertex, there are also two choices to transform it into a gentle vertex: deleting the relation \(\{ac\}\) or deleting the relation \(\{bc\}\). The Type $(\mathrm{III}.b)$ is similar, so when analyzing Type $(\mathrm{III})$ non-gentle vertices below, we only discuss the case shown in Type $(\mathrm{III}.a)$.
   
    (2) If the vertex $v$ admits a loop $\varepsilon$, we may classify the type of the non-gentle vertex $v$ by regarding $\varepsilon$ as an arrow entering $v$ and an arrow leaving $v$.

\end{remark}

Suppose $A=\mathbf{k}Q/I$ is a string algebra. Let $n_1,n_2,n_3$ denote the numbers of $(\mathrm{I})$-non-gentle vertices, $(\mathrm{II})$-non-gentle vertices and $(\mathrm{III})$-non-gentle vertices, respectively, and let $n_0$ be the number of relations of length strictly greater than 2. In the aforementioned construction of associated locally gentle algebras, we obtain such algebras by removing a fixed number of relations from the string algebra, and this fixed number is exactly $n_0+2n_1+n_2+n_3$.

Although the number of deleted relations is an invariant of the string algebra, the specific choice of relations to delete at non-gentle vertices of Type (I) and Type (III) is not unique. Since each such vertex admits exactly two choices, there are $2^{n_1+n_3}$ ways to construct the corresponding locally gentle algebras, yielding up to $2^{n_1+n_3}$ distinct geometric models.

\begin{example}\label{ex:two_loops}
Let $A = \mathbf{k}Q/I$ be a string algebra containing a vertex $v$ with exactly two distinct loops, $a$ and $b$. By Proposition \ref{pro:string_loops}, $v$ admits no other incident arrows. Thus, the set of all length-2 paths passing through $v$ is exactly $\{a^2, ab, ba, b^2\}$.\\
$(1)$ The vertex $v$ is gentle if and only if either of the following two conditions holds:
    \begin{itemize}
        \item $a^2, b^2 \in I$ (while $ab, ba \notin I$), provided there exists a finite alternating path of length $\ge 3$ (composed of $a$ and $b$) contained in $I$.
        \item $ab, ba \in I$ (while $a^2, b^2 \notin I$), provided $a^m, b^n \in I$ for some integers $m, n \ge 3$.
    \end{itemize}
$(2)$ The vertex $v$ is non-gentle if and only if either of the following two conditions holds:
    \begin{itemize}
        \item Type (I): All four length-2 paths belong to $I$ (i.e., $a^2, b^2, ab, ba \in I$).
        \item Type (II): Exactly three of the length-2 paths belong to $I$ (e.g., $a^2, b^2, ab \in I$, while $ba \notin I$).
        
    \end{itemize}

\end{example}

\subsection{Admissible deletion datum}

As shown in the foregoing discussion, the uniqueness of geometric models of a string algebra $A$ is determined by its associated locally gentle algebra together with the associated relation of Type \ref{rel:R2}. The associated locally gentle algebra and the associated labelled relation are closely related to the non‑gentle transforms of $A$. We therefore need to specify the concrete choices made at non‑gentle vertices.

\begin{definition}
Let $A=\mathbf{k}Q/I$ be a string algebra. An \emph{admissible deletion datum} $\delta$ consists of the following two parts:
\begin{enumerate}
    \item a choice of 2-relations to be deleted at every non-gentle vertex of $A$;
    \item the deletion of all generating relations of length at least $3$ contained in $I$,
\end{enumerate}
such that the remaining relations generate an ideal $J_\delta\subseteq I$, which defines a locally gentle algebra $B_\delta=\mathbf{k}Q/J_\delta$. Let $\mathfrak{D}(A)$ denote the set of all admissible deletion data for $A$.
\end{definition}

For each $\delta \in \mathfrak{D}(A)$, the deleted  relations naturally correspond to a set of labels on the geometric model, which we denote by $\mathcal{L}_\delta$. Thus, each $\delta$ yields a geometric pair $(J_\delta, \mathcal{L}_\delta)$.

\begin{definition}\label{def:deletion_data_equiv}
Two admissible deletion data $\delta, \delta' \in \mathfrak{D}(A)$ are said to be \emph{equivalent} (denoted by $\delta \sim \delta'$) if there exists a bound-quiver isomorphism $\varphi: (Q, J_\delta) \xrightarrow{\sim} (Q, J_{\delta'})$ such that the induced map on paths perfectly preserves the label data, i.e., $\varphi(\mathcal{L}_\delta) = \mathcal{L}_{\delta'}$.
\end{definition}

\begin{remark}\label{rem:label-perserve}
    In Definition~\ref{def:deletion_data_equiv}, the equality $\phi(\mathcal L_{\delta})=\mathcal L_{\delta'}$ is always understood in the following local sense. Let \(v\) be a non-gentle vertex at which \(\delta\) and \(\delta'\) make different deletion choices, and let \(\Sigma(v)\) denote the symmetric region involved in these two local choices. On \(\Sigma(v)\), the isomorphism \(\phi\) maps each label in
\(\mathcal L_{\delta}\) to its symmetrically corresponding label in \(\mathcal L_{\delta'}\) according to the prescribed local symmetry. Outside \(\Sigma(v)\), every label, together with its supporting path, remains unchanged.

If a labelled path meets both \(\Sigma(v)\) and its complement, then the part supported on \(\Sigma(v)\) is mapped according to the prescribed local symmetry, whereas the part supported on the complement remains unchanged. Moreover, the order of all occurrences along the labelled path is preserved.

Consequently, the equality
$\phi(\mathcal L_{\delta})=\mathcal L_{\delta'}$ 
does not mean that the labels may be permuted arbitrarily. Rather, the induced label map is required to be compatible, at every ordered occurrence, with the prescribed local symmetry at each non-gentle vertex. In particular, the restriction of any global label-preserving bound-quiver isomorphism to \(\Sigma(v)\) induces a label-preserving bound-quiver isomorphism between the two local deletion choices at \(v\). This convention is compatible with the
equivalence relation on  labelled tiled surfaces given in Definition~\ref{def:equivalence_models}.
\end{remark}

To ensure that the geometric model is unique, the local subquivers around these vertices must exhibit structural symmetry. We have the following definition.

\begin{definition}\label{def:symmetric_vertex}
Let $A = \mathbf{k}Q/I$ be a string algebra, and let $\mathfrak{D}(A)$ denote the set of all admissible deletion data. For any $\delta \in \mathfrak{D}(A)$ and any non-gentle vertex $u$, let $\delta(u)$ denote the specific length-$2$ relations deleted at $u$ under the choice $\delta$.

Let $v \in Q_0$ be a non-gentle vertex of Type (I) or Type (III). We say that the bound quiver $(Q, I)$ is \emph{globally symmetric with respect to $v$} if, for any two deletion data $\delta_1, \delta_2 \in \mathfrak{D}(A)$ that differ exclusively at the vertex $v$ (i.e., $\delta_1(u) = \delta_2(u)$ for all $u \neq v$, and $\delta_1(v) \neq \delta_2(v)$), there exists a global bound-quiver automorphism $\Psi: (Q, I) \xrightarrow{\sim} (Q, I)$ satisfying the following condition:

The automorphism $\Psi$ induces a bijection on the set of relations, mapping the entire deletion datum $\delta_1$ exactly to $\delta_2$ as sets. Specifically, this requires:
\begin{enumerate}
    \item[$(\mathrm{i})$] $\Psi$ maps $v$ to some non-gentle vertex $v'$ (where possibly $v' = v$), such that the deleted relations at $v$ are mapped to the deleted relations at $v'$, i.e., $\Psi(\delta_1(v)) = \delta_2(v')$;
    \item[$(\mathrm{ii})$] $\Psi$ permutes the remaining deletion choices, meaning $\Psi(\delta_1 \setminus \delta_1(v)) = \delta_2 \setminus \delta_2(v')$ as sets.
\end{enumerate}
Consequently, this global automorphism ensures that the induced label data are equivalent, satisfying $\Psi(\mathcal{L}_{\delta_1}) = \mathcal{L}_{\delta_2}$, and yielding equivalent geometric models.
\end{definition}

\begin{remark}\label{rem:complementary_bound_subquiver}
Let $v$ be a non-gentle vertex of Type~\textup{(I)} or
Type~\textup{(III)}. Throughout this section, a branch associated
with an arrow incident with $v$ means the full connected branch of
the underlying unoriented quiver $Q\setminus\{v\}$
determined by that arrow.

Let $\Sigma(v)$ be the full subquiver induced by $v$ together with
the connected branches supporting a local symmetry prescribed by
Condition~\ref{U1} or~\ref{U2} of
Lemma~\ref{three-conditions}. Let $C(v)$ be the full subquiver
induced by $v$ together with the remaining connected branches. We
call $C(v)$ the \emph{complementary bound subquiver} of
$\Sigma(v)$. By construction,
\[
Q=\Sigma(v)\cup C(v),
\qquad
\Sigma(v)\cap C(v)=\{v\}.
\]

Suppose that the two local deletion choices at $v$ give rise to
the bound quivers $(Q,I_1)$ and $(Q,I_2)$, and that
\[
\varphi_v:
\bigl(\Sigma(v),I_1|_{\Sigma(v)}\bigr)
\xrightarrow{\sim}
\bigl(\Sigma(v),I_2|_{\Sigma(v)}\bigr)
\]
is the local bound-quiver isomorphism induced by the prescribed
symmetry. Assume that:
\begin{enumerate}[label=\textup{(\arabic*)}]
  \item $\varphi_v(v)=v$;
  \item the restrictions of $I_1$ and $I_2$ to $C(v)$ coincide;
  \item after extending $\varphi_v$ by the identity on $C(v)$,
        every relation containing arrows from both
        $\Sigma(v)$ and $C(v)$ is mapped from $I_1$ to $I_2$;
  \item the corresponding label data are preserved.
\end{enumerate}
Then $\varphi_v$ and the identity map on $C(v)$ agree on their
common vertex $v$, and hence define a global label-preserving
bound-quiver isomorphism
\[
\widetilde{\varphi}_v:(Q,I_1)\xrightarrow{\sim}(Q,I_2)
\]
given by
\[
\widetilde{\varphi}_v|_{\Sigma(v)}=\varphi_v,
\qquad
\widetilde{\varphi}_v|_{C(v)}
=\operatorname{id}_{C(v)}.
\]

Thus, the statement that the complementary bound subquiver remains
unchanged means not only that its underlying quiver is unchanged,
but also that its relations, the relations crossing the common
vertex $v$, and the corresponding label data are compatible with
the local symmetry.
\end{remark}

\subsection{Unique geometric model}

Combining Remark \ref{remark:transform non-gentle} with Theorem \ref{string algebra--labelled tiling algebra}, the conclusion can be drawn as follows:
\begin{lemma}\label{Lem:geo-unique}
	Let $A = \mathbf{k}Q/I$ be a string algebra. There is a one-to-one correspondence between the equivalence classes of geometric models of $A$ and the equivalence classes of admissible deletion data $\mathfrak{D}(A)/\sim$. 
\end{lemma}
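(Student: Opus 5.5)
We plan to construct two mutually inverse maps between the set of equivalence classes of saturated labelled tiled surfaces realizing $A$ and $\mathfrak{D}(A)/\sim$. For the map from deletion data to surfaces, take $\delta\in\mathfrak{D}(A)$. By construction, $J_\delta\subseteq I$ is quadratic and $B_\delta=\mathbf{k}Q/J_\delta$ is locally gentle. By the realization of locally gentle algebras via dual cellular dissections \cite{PPP19}, used in the proof of Theorem \ref{string algebra--labelled tiling algebra}, there is a tiled surface $(\mathcal{S}_\delta,\mathsf{P}_\delta)$, unique up to orientation-preserving homeomorphism, with $Q(\mathcal{S}_\delta,\mathsf{P}_\delta)=Q$ and whose \ref{rel:R1}-relations are exactly $J_\delta$. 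Each deleted relation is a path $a_1\cdots a_s\notin J_\delta$ with every $a_ia_{i+1}\notin J_\delta$. By the definition of $Q(\mathcal{S},\mathsf{P})$ and \ref{rel:R1}, consecutive arrows then share the red point $p_{a_i}=p_{a_{i+1}}$, so the path is a fan at a single red point, and it yields a label. This label set is $\mathcal{L}_\delta$, and $I_{\mathsf{P}_\delta,\mathcal{L}_\delta}=I$. Maximality of $J_\delta$ (saturation) follows because $\delta$ deletes only what is forced: the $n_0$ long relations, and at each non-gentle vertex a minimal set of 2-relations, as listed in Remark \ref{remark:transform non-gentle}. Conversely, a saturated model $(\mathcal{S},\mathsf{P},\mathsf{L})$ of $A$ gives a locally gentle cover $J$. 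The relations of $I$ not in $J$ are then exactly the label relations. These must include every relation of length $\ge 3$ and, at each non-gentle vertex, one of the admissible local choices, so they define some $\delta\in\mathfrak{D}(A)$ with $J=J_\delta$ and $\mathsf{L}=\mathcal{L}_\delta$ (using reducedness).

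Next we check that both maps respect the equivalences. Suppose $\delta\sim\delta'$ via $\varphi:(Q,J_\delta)\to(Q,J_{\delta'})$ with $\varphi(\mathcal{L}_\delta)=\mathcal{L}_{\delta'}$. The surface of a locally gentle bound quiver is determined up to homeomorphism by the bound quiver: polygons are recovered from the maximal $J$-paths and $J$-permitted threads, and red points from the permitted threads and cycles. So $\varphi$ lifts to an orientation-preserving homeomorphism $h:\mathcal{S}_\delta\to\mathcal{S}_{\delta'}$ sending arcs $x_i\mapsto x_{\varphi(i)}$. Because labels are fans, that is, ordered paths at a red point, and $\varphi$ preserves them in the ordered local sense of Remark \ref{rem:label-perserve}, $h$ maps ordered labels to ordered labels, which is Definition \ref{def:equivalence_models}. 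Conversely, an equivalence $h$ of labelled tiled surfaces induces a bijection on arcs and on angles, preserving the clockwise successor relation because $h$ preserves orientation. This gives a quiver isomorphism carrying \ref{rel:R1}-relations to \ref{rel:R1}-relations and labels to labels, hence $\delta\sim\delta'$. The two constructions are mutually inverse by the reconstruction just described, which establishes the bijection.

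We expect the main obstacle to be the reconstruction step: showing that the equivalence class of the labelled tiled surface depends only on the bound quiver $(Q,J_\delta)$ and the labels, and that any bound-quiver isomorphism lifts to a homeomorphism. We would first try to cite the uniqueness part of the \cite{PPP19} correspondence, which realizes a locally gentle algebra by a marked surface with dissection. Failing that, we would rebuild the surface by gluing one polygon per maximal $J_\delta$-path (plus boundary segments) and one disk neighbourhood per red point, determined by the permitted threads and $J_\delta$-permitted cycles, and observe that $\varphi$ matches these pieces and their gluing data. A secondary subtlety is loops and self-folded angles (Proposition \ref{pro:string_loops}), where the ordering of repeated arcs in a label must be tracked.
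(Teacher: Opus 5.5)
Your proposal is correct and follows essentially the same route as the paper. Both send $\delta$ to the labelled surface of $(Q,J_\delta,\mathcal L_\delta)$, obtain saturation and surjectivity from the local classification of deletions at non-gentle vertices, and match the two equivalence relations by translating between orientation-preserving homeomorphisms and label-preserving bound-quiver isomorphisms. You are more careful than the paper in two places: you justify that each deleted minimal relation is a fan at a single red point, and you single out lifting a given $\varphi$ to a homeomorphism, via the locally gentle dissection--quiver reconstruction, as a step that needs an argument; the paper simply asserts this lift.
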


\begin{proof}
Let $\mathfrak{G}(A)$ denote the set of equivalence classes of geometric models of $A$. We define a map
\[
\Theta\colon \mathfrak{D}(A)/{\sim}\longrightarrow \mathfrak{G}(A)
\]
by sending the class of an admissible deletion datum $\delta$ to the
class of the labelled tiled surface associated with $(Q,J_\delta,\mathcal L_\delta)$.

We first show that $\Theta$ is well defined. By Remark~\ref{remark:transform non-gentle}, the local deletion
choices make \(J_\delta\) maximal among the quadratic ideals \(J\subseteq I\) for which \(\mathbf{k}Q/J\) is locally gentle. Thus, \(J_\delta\) is a saturated locally gentle cover of \(A\). Since the deleted minimal relations are encoded by \(\mathcal L_\delta\), the pair \((J_\delta,\mathcal L_\delta)\) determines a geometric model of \(A\). If \(\delta\sim\delta'\), then we have a label‑preserving bound‑quiver isomorphism \((Q,J_\delta,\mathcal L_\delta) \xrightarrow{\sim} (Q,J_{\delta'},\mathcal L_{\delta'})\) that induces an equivalence of the associated labelled tiled surfaces. Hence \(\Theta\) is well defined.

We next prove surjectivity. Let \((S,P,\mathcal L)\) be a geometric model of \(A\). After identifying its associated labelled tiling algebra with \(A=\mathbf{k}Q/I\), its underlying locally gentle bound quiver has the form \((Q,J)\), where \(J\subseteq I\) is a saturated locally gentle cover. By Remark~\ref{remark:transform non-gentle}, \(J\) determines an admissible local deletion choice at every non-gentle vertex, while \(\mathcal L\) records the remaining deleted minimal relations, including those of length at least three. Hence these data define some \(\delta\in\mathfrak D(A)\) such that
$J_\delta=J$ and $\mathcal L_\delta=\mathcal L$.
Therefore, \((S,P,\mathcal L)\) lies in the image of \(\Theta\).

Finally, suppose that $\Theta([\delta])=\Theta([\delta'])$. An
equivalence between the corresponding labelled tiled surfaces maps
one dissection to the other and preserves all ordered labels. It induces a
label-preserving bound-quiver isomorphism
$(Q,J_\delta,\mathcal L_\delta)\xrightarrow{\sim}
(Q,J_{\delta'},\mathcal L_{\delta'})$.
This satisfies the equivalence criterion for admissible deletion data in Definition \ref{def:deletion_data_equiv}, hence \(\delta\sim\delta'\), which proves injectivity of \(\Theta\).

Thus, we obtain the bijection $\mathfrak{G}(A) \cong \mathfrak{D}(A)/{\sim}$.
\end{proof}

By Lemma \ref{Lem:geo-unique}, the geometric model of $A$ is unique if and only if all admissible deletion data are equivalent. Algebraically, this means the automorphism group of the bound quiver acts transitively on $\mathfrak{D}(A)$, ensuring that any two choices of deleted relations $(J_\delta, \mathcal{L}_\delta)$ and $(J_{\delta'}, \mathcal{L}_{\delta'})$ are isomorphic via a label-preserving bound-quiver isomorphism.

If the string algebra $A$ contains no non-gentle vertices, the uniqueness of its geometric model is trivial. We first consider the case where A contains non-gentle vertices, specifically of Type (II).

\begin{lemma} \label{lem:non-II}
Let $A=\mathbf{k}Q/I$ be a string algebra. If all non-gentle vertices of $A$ are of Type (II), then the geometric model of $A$ is unique.
\end{lemma}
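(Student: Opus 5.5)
The plan is to reduce the statement to Lemma \ref{Lem:geo-unique}: the equivalence classes of geometric models of $A$ are in bijection with $\mathfrak{D}(A)/{\sim}$. So it is enough to show that when every non-gentle vertex is of Type (II), the set $\mathfrak{D}(A)$ has exactly one element. Then $\mathfrak{D}(A)/{\sim}$ is a singleton, and so is the set of equivalence classes of geometric models.

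I would first describe a general admissible deletion datum $\delta$. Part (2) of the definition is forced: every generating relation of length at least $3$ in $I$ must be deleted, so this part of $\delta$ is determined by $I$ alone. For part (1), I would invoke Remark \ref{remark:transform non-gentle}(1). At a Type (II) vertex $v$, with incoming arrows $a,b$, outgoing arrows $c,d$ and $ac,ad,bd\in I$, $bc\notin I$, the only way to make $v$ gentle is to delete $ad$. Concretely, keeping all three relations violates \ref{G1}, since $a$ would have two arrows $c,d$ with $ac,ad\in I$. Deleting $ac$ or $bd$ instead would break \ref{S2}: for example, if $bd$ is deleted then $bc,bd\notin J$. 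Deleting two or more relations would also break \ref{S2}, or would break the requirement that $J_\delta\subseteq I$ together with local gentleness. At gentle vertices nothing is deleted, since all their length-$2$ relations must stay in $J_\delta$: removing any of them would break \ref{G1} or \ref{S2}, or contradict the saturation in Definition \ref{def:saturated_model}. The loop cases are covered by Remark \ref{remark:transform non-gentle}(2) and Example \ref{ex:two_loops}, where a loop is treated as one incoming and one outgoing arrow, so the same local analysis applies.

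Putting this together, $J_\delta$ is generated exactly by the length-$2$ relations of $I$ other than the forced deletions $ad$ at each Type (II) vertex, and $\mathcal{L}_\delta$ consists of the labels attached to the deleted relations. Both are independent of any choice. Hence $|\mathfrak{D}(A)|=1$, which is consistent with the count $2^{n_1+n_3}=1$ when $n_1=n_3=0$. Lemma \ref{Lem:geo-unique} then gives uniqueness.

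The main obstacle is the local uniqueness claim at Type (II) vertices, where one has to check carefully that no alternative deletion yields a locally gentle quadratic ideal. The point to watch is that local choices at different vertices do not interact. Each length-$2$ relation $xy$ passes through exactly one vertex, $t(x)=s(y)$, so the conditions \ref{S2} and \ref{G1} for $J_\delta$ decompose vertex by vertex. I would state this decomposition explicitly, since it is what lets the global datum be assembled from independent local choices.
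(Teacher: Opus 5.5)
Your proposal is correct and follows the paper's proof: you reduce via Lemma~\ref{Lem:geo-unique} to showing $|\mathfrak{D}(A)|=1$, which holds because Remark~\ref{remark:transform non-gentle} gives a unique local deletion (namely $ad$) at each Type~(II) vertex, while all relations of length at least $3$ are deleted by definition. The paper simply cites the remark; you additionally check the local uniqueness against \ref{S2} and \ref{G1} and observe that these conditions decompose vertex by vertex, which is a harmless elaboration.
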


\begin{proof}
By Remark \ref{remark:transform non-gentle}, $\mathfrak{D}(A)$ contains precisely one element in this case. The assertion follows directly from Lemma \ref{Lem:geo-unique}.
\end{proof}

Observe that under the assumptions of Lemma \ref{lem:non-II}, the geometric model of $A$ is absolutely unique, with no need to pass to equivalence under labelled tiled surfaces. Next, we consider the case where the algebra contains non-gentle vertices of Type (I) and/or Type (III). The following lemma gives a sufficient condition for the geometric model to be unique.

\begin{lemma}\label{three-conditions}
Let $A = \mathbf{k}Q/I$ be a string algebra and $A$ contains non-gentle vertices of Type (I) and/or Type (III). Then the geometric model of $A$ is unique, if the bound quiver $(Q, I)$ satisfies the following conditions:
\begin{enumerate}[label=(\arabic*), label={(U\arabic*)}]
    \item\label{U1} If $v \in Q_0$ is a Type (I) non-gentle vertex, then the local structure at $v$ satisfies(see Figure \ref{non-gentle-sys-1}):
    \begin{enumerate}
        \item[$(\mathrm{a1})$] There exists a label-preserving bound-quiver isomorphism either between the full subquivers $(Q^{(1)}, I|_{Q^{(1)}})$ and $(Q^{(2)}, I|_{Q^{(2)}})$, or between $(Q^{(3)}, I|_{Q^{(3)}})$ and $(Q^{(4)}, I|_{Q^{(4)}})$, mapping the local deleted relations symmetrically;
        \item[$(\mathrm{a2})$] The full subquivers \(Q^{(1)}\cup Q^{(2)}\) and \(Q^{(3)}\cup Q^{(4)}\) lie in distinct connected components of the underlying unoriented quiver after removing \(v\).
        
    \end{enumerate}
    \item\label{U2} If $v \in Q_0$ is a Type (III) non-gentle vertex, then there exists a label-preserving bound-quiver isomorphism between $(Q^{(1)}, I|_{Q^{(1)}})$ and $(Q^{(2)}, I|_{Q^{(2)}})$.
    \begin{figure}[htbp]
		\begin{center}
				\begin{tikzpicture}[>=stealth, scale=0.5]
					\begin{scope}[shift={(-9, 0)}]
						\path 
						(-2.3,2)  coordinate (1_1)
						(-2.3,-2) coordinate (2_1)
						(0,0) coordinate (3_1)
						(2.3,2)  coordinate (4_1)
						(2.3,-2)  coordinate (5_1);
						
						\draw 
						(1_1) node[purple] {$Q^{(3)}$}
						(2_1) node[purple] {$Q^{(2)}$}
						(3_1) node[blue] {$v$}
						(4_1) node[purple] {$Q^{(1)}$}
						(5_1) node[purple] {$Q^{(4)}$};
						
						\draw [thick,->] (-0.15,0.15) to (-1.85,1.85);
						\draw [thick,->] (-1.65,-1.65) to (-0.15,-0.15);
						\draw [thick,->] (0.15,-0.15) to (1.7,-1.7);
						\draw [thick,->] (1.7,1.7) to (0.15,0.15) ;
						
						\draw[,line width=1pt, dashed, red] (0.6,0) arc[start angle=0, end angle=-370, radius=0.6]  ;	
						\draw (-1,-1.4) node {$b$};
						\draw (-1,1.4) node {$c$};
						\draw (1,1.4) node {$a$};
						\draw (1,-1.4) node {$d$};
                        
                        \draw (0,-3.5) node {$ac,ad,bc,bd \in I$};
				\draw (0,-5) node {\textbf{Type (I)}};
					\end{scope}
					
					\begin{scope}[shift={(0, 0)}]
						\path 
						(-2.3,2)  coordinate (1_2)
						(-2.3,-2) coordinate (2_2)
						(0,0) coordinate (3_2)
						(2.6,0)  coordinate (4_2);
						
						\draw 
						(1_2) node[purple] {$Q^{(1)}$}
						(2_2) node[purple] {$Q^{(2)}$}
						(3_2) node[blue] {$v$}
						(4_2) node[purple] {$Q^{(3)}$};
						
						\draw [thick,->] (-1.85,1.85) to (-0.15,0.15);
						\draw [thick,->] (-1.65,-1.65) to (-0.15,-0.15);
						\draw [thick,->] (0.2,0) to (1.85,0);
						
						\draw[, line width=1pt,red, dashed] (-0.5,0.5) to[out=30, in=90](0.5,0);
						\draw[, line width=1pt,red, dashed] (-0.5,-0.5) to[out=-30, in=-90](0.5,0);
						
						\draw (-1,-1.4) node {$b$};
						\draw (-1,1.4) node {$a$};
						\draw (1,0.2) node {$c$};

                        \draw (0,-3.5) node {$ac,bc \in I$};
                        \draw (0,-5) node {\textbf{Type (III.a)}};
					\end{scope}
					
					\begin{scope}[shift={(9, 0)}]
						\path 
						(-2.5,0)  coordinate (1_3)
						(0,0) coordinate (2_3)
						(2.6,2) coordinate (3_3)
						(2.6,-2)  coordinate (4_3);
						
						\draw 
						(1_3) node[purple] {$Q^{(3)}$}
						(2_3) node[blue] {$v$}
						(3_3) node[purple] {$Q^{(1)}$}
						(4_3) node[purple] {$Q^{(2)}$};
						
						\draw [thick,->] (0.15,-0.15) to (1.85,-1.85);
						\draw [thick,->] (0.15,0.15) to (1.85,1.85);
						\draw [thick,->] (-1.85,0) to (-0.2,0);
						
						\draw[, line width=1pt,red, dashed] (-0.7,0) to[out=90, in=150](0.5,0.5);
						\draw[, line width=1pt,red, dashed] (-0.7,0) to[out=-90, in=-150](0.5,-0.5);
						
						\draw (1,1.4) node {$b$};
						\draw (1,-1.4) node {$c$};
						\draw (-1,0.2) node {$a$};

                        \draw (0,-3.5) node {$ab,ac \in I$};
                        \draw (0,-5) node {\textbf{Type (III.b)}};
					\end{scope}
				\end{tikzpicture}

		\end{center}
        \caption{Local quiver configurations for (I)-non-gentle vertex $v$ satisfying condition \ref{U1} and (III)-non-gentle vertex $v$ satisfying condition \ref{U2} in Lemma \ref{three-conditions}.}
		\label{non-gentle-sys-1}
	\end{figure}
    \item\label{U3} For any two Type (III) non-gentle vertices, none of the four subquiver configurations shown in Figure \ref{two III vertices} exists in $(Q, I)$.
    \begin{figure}[htbp]
		\begin{center}
			\begin{tikzpicture}[>=stealth, scale=0.592]
				\path 
				
				(-7,0)  coordinate (1)
				(-4.75,0) coordinate (2)
				(-3.1,0) coordinate (3)
				(-1,2)  coordinate (4)
				(-1,-2)  coordinate (5)
				(1,2) coordinate (6)
				(1,-2) coordinate (7)
				(3.1,0)  coordinate (8)
				(4.75,0)  coordinate (9)
				(7,0) coordinate (10)
				;

				\draw 
				(3) node[blue] {$v_{1}$}
				(8) node[blue] {$v_{2}$}
				;
                
				\draw 
						(0,2) node[purple] {$Q^{(1)}$}
						(0,-2) node[purple] {$Q^{(2)}$};
				\draw[decorate,line width=1pt,decoration={snake,amplitude=1mm,segment length=3mm}] (1)--(2);
				\draw[decorate,line width=1pt,decoration={snake,amplitude=1mm,segment length=3mm}] (9)--(10);
				
				\draw [thick,->] (-1.15,-1.85) to node[below right ]{}(-2.85,-0.2);
				\draw [thick,->] (-1.15,1.85) to node[above right ]{}(-2.85,0.2);
				\draw [thick,->] (-3.3,0) to (-4.8,0);
				
				\draw [thick,->] (1.15,-1.85) to node[below right ]{}(2.85,-0.2);
				\draw [thick,->] (1.15,1.85) to node[above right ]{}(2.85,0.2);
				\draw [thick,->] (3.3,0) to (4.8,0);

				\draw[, line width=1pt,red, dashed] (-3.7,0) to[out=90, in=150](-2.5,0.5);
				\draw[, line width=1pt,red, dashed] (-3.7,0) to[out=-90, in=-150](-2.5,-0.5);
				\draw[, line width=1pt,red, dashed] (3.7,0) to[out=90, in=30](2.5,0.5);
				\draw[, line width=1pt,red, dashed] (3.7,0) to[out=-90, in=-30](2.5,-0.5);

				\draw (-4,0.2) node {$a$};
				\draw (-2,1.4) node {$b$};
				\draw (-2,-1.4) node {$c$};
				\draw (4.1,0.4) node {$f$};
				\draw (2,1.5) node {$d$};
				\draw (2,-1.4) node {$e$};

			\end{tikzpicture}
		\end{center}
		\caption{The non-gentle vertices \(v_1\) and \(v_2\) may be of either Type $(\mathrm{III}.a)$ or Type $(\mathrm{III}.b)$.}
		\label{two III vertices}
	\end{figure}
\end{enumerate}
\end{lemma}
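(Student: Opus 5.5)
By Lemma~\ref{Lem:geo-unique}, it suffices to show that all admissible deletion data in $\mathfrak{D}(A)$ are equivalent in the sense of Definition~\ref{def:deletion_data_equiv}. The plan is to connect any two data by elementary flips and to show that each flip preserves the equivalence class.

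By Remark~\ref{remark:transform non-gentle}, choices at Type (II) vertices are forced, and all relations of length at least $3$ are always deleted. Hence two data $\delta,\delta'$ can differ only at Type (I) and Type (III) vertices, each of which has exactly two local options. Order the finitely many vertices where $\delta$ and $\delta'$ disagree as $v_1,\dots,v_k$. This gives a chain
\[
\delta=\delta_0,\ \delta_1,\ \dots,\ \delta_k=\delta',
\]
in which consecutive data differ exclusively at the single vertex $v_i$. Since $\sim$ is transitive, it suffices to prove $\delta_{i-1}\sim\delta_i$ for each $i$.

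For a single flip at a vertex $v$, we build the global label-preserving isomorphism of Remark~\ref{rem:complementary_bound_subquiver}. The construction depends on the type of $v$.

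\textbf{Type (I).} Suppose (a1) holds for the pair $(Q^{(1)},Q^{(2)})$; the pair $(Q^{(3)},Q^{(4)})$ is symmetric. Take $\Sigma(v)$ to be the full subquiver on $v$ together with $Q^{(1)}\cup Q^{(2)}$. Define $\varphi_v$ by the isomorphism $Q^{(1)}\cong Q^{(2)}$, its inverse, the swap $a\leftrightarrow b$, and $\varphi_v(v)=v$. Then deleting $\{ac,bd\}$ is carried to deleting $\{bc,ad\}$. By (a2), $Q^{(3)}\cup Q^{(4)}$ lies in the complementary component $C(v)$ and meets $\Sigma(v)$ only in $v$. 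So extending by the identity on $C(v)$ is consistent, and conditions (1)--(2) of Remark~\ref{rem:complementary_bound_subquiver} hold.

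\textbf{Type (III.a).} Take $\Sigma(v)$ to be $v$ together with $Q^{(1)}\cup Q^{(2)}$. Swap $a\leftrightarrow b$ via the isomorphism supplied by \ref{U2}; this exchanges deleting $ac$ with deleting $bc$. Type (III.b) is treated dually.

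In both cases it remains to check conditions (3)--(4) of Remark~\ref{rem:complementary_bound_subquiver}. Relations of $I$ that pass through $v$ and use arrows on both sides must be sent to relations. For (I) every length-$2$ composition through $v$ lies in $I$, so any such monomial relation contains a length-$2$ subpath through $v$ in $I$ and is automatically respected. Longer labels must be handled by Remark~\ref{rem:label-perserve}: the part in $\Sigma(v)$ is transported and the rest is left fixed. The hypothesis "label-preserving" in \ref{U1}/\ref{U2} guarantees that these images are again labels of $\mathcal L_{\delta_i}$.

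\textbf{Main obstacle.} The delicate point is that $\Sigma(v)$ may contain other non-gentle vertices of Type (I) or (III), whose deletion choices $\varphi_v$ could move. Then $\varphi_v$ would carry $\delta_{i-1}$ to a datum differing from $\delta_i$ at further vertices, and the induction would not close. Condition \ref{U3} is designed to exclude exactly this. The four forbidden configurations of Figure~\ref{two III vertices} are those where two Type (III) vertices share the symmetric branches $Q^{(1)},Q^{(2)}$, so that swapping at $v_1$ forces an incompatible swap at $v_2$. I would argue as follows. Absent these configurations, any non-gentle vertex $u\neq v$ inside $\Sigma(v)$ is mapped by $\varphi_v$ to a non-gentle vertex $u'$ of the same type. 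Because the isomorphisms in \ref{U1}/\ref{U2} are label-preserving and map local deleted relations symmetrically, $\varphi_v(\delta_{i-1}(u))=\delta_{i-1}(u')=\delta_i(u')$. This is precisely the global symmetry of Definition~\ref{def:symmetric_vertex}, condition (ii).

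I expect this case check — that the symmetry at $v$ is compatible with the fixed choices at all other vertices in its branches — to be the hardest step. I would carry it out by contradiction: a failure produces two Type (III) vertices joined through both symmetric branches, which is one of the configurations of Figure~\ref{two III vertices}, contradicting \ref{U3}.

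Granting this, each $\delta_{i-1}\sim\delta_i$, hence $\delta\sim\delta'$, and uniqueness follows from Lemma~\ref{Lem:geo-unique}.
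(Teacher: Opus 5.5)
\textbf{The gap.} It sits exactly at the step you flag as the main obstacle, and your proposed resolution does not work. The isomorphism supplied by \ref{U1}(a1) or \ref{U2} is an isomorphism $\psi\colon (Q^{(1)},I|_{Q^{(1)}})\to(Q^{(2)},I|_{Q^{(2)}})$ built from $I$. In $I$, every other non-gentle vertex still carries all of its $2$-relations, so $\psi$ knows nothing about $\delta_{i-1}$. The choices of a deletion datum at distinct non-gentle vertices are independent. Hence nothing forces $\psi(\delta_{i-1}(u))=\delta_{i-1}(u')$ for a non-gentle vertex $u\in Q^{(1)}$ with $u'=\psi(u)$.

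\textbf{A concrete example.} Let $v$ be of Type~(I) with arrows $a\colon w\to v$, $b\colon z\to v$, $c\colon v\to s$, $d\colon v\to s'$. Add one more arrow $s'\to s''$ whose composite with $d$ is not in $I$. Then $Q^{(3)}\not\cong Q^{(4)}$, so the only available symmetry at $v$ exchanges $Q^{(1)}$ and $Q^{(2)}$. Let $Q^{(1)}$ consist of a Type~(I) vertex $u$ with arrows $x_1\to u$ and $x_2\to u$ from two leaves, and arrows $u\to w$ and $u\to w'$, with all four $2$-relations through $u$ in $I$. Let $Q^{(2)}$ be an isomorphic copy with vertex $u'$.
\begin{itemize}
\item Conditions \ref{U1}--\ref{U3} all hold, \ref{U3} vacuously.
\item For either choice of $\psi$, you may take $\delta_{i-1}(u')\neq\psi(\delta_{i-1}(u))$.
\item Then $\varphi_v(\delta_{i-1})$ differs from $\delta_i$ at both $u$ and $u'$, so $\varphi_v$ does not witness $\delta_{i-1}\sim\delta_i$.
\end{itemize}
There is no Type~(III) vertex here, so \ref{U3} cannot be what rescues the step, and the contradiction via Figure~\ref{two III vertices} that you sketch cannot succeed. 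In this example $\delta_{i-1}\sim\delta_i$ is still true, but only after composing $\varphi_v$ with the local flips at $u$ and $u'$. That composition requires knowing those inner flips are already equivalences, which calls for an induction over a well-founded order on the non-gentle vertices.

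\textbf{What the paper does instead.} Supplying that order is exactly the role of the dependency digraph $G_D$ in the paper's proof. The paper shows that every dependency edge $u\to v$ is either nested, with $\Sigma(u)\subsetneq\Sigma(v)$, or a double-branch edge of the form \textup{(III)}$\to$\textup{(I)}. It then uses \ref{U1}(a2) and \ref{U3} to show that $G_D$ is acyclic. Finally, it inducts on the number of Type~(I)/(III) vertices, fixing the choice at a sink $v^*$ first. This is done at the level of the intermediate ideals $I_1,I_2$, in which all other non-gentle vertices still carry all their relations. There the local symmetry at $v^*$ genuinely glues to a global isomorphism $\Phi\colon(Q,I_1)\xrightarrow{\sim}(Q,I_2)$, which transports each completion on one side to an equivalent completion on the other. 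Your decomposition into single flips, with arbitrary choices fixed elsewhere, discards this ordering, and the ordering is precisely where (a2) and \ref{U3} do their work.
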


\begin{proof}
Let $n$ be the number of non-gentle vertices of
Type~\textup{(I)} or Type~\textup{(III)} in $Q_0$.
The deletion of every relation of length at least three,
as well as the deletion at each Type~\textup{(II)} non-gentle
vertex, is uniquely determined. We therefore fix all these
deletions throughout the proof. By
Lemma~\ref{Lem:geo-unique}, it suffices to prove that all
admissible deletion data in $\mathfrak D(A)$ are equivalent.

For each non-gentle vertex $v$ of Type~\textup{(I)} or
Type~\textup{(III)}, fix a local symmetry provided by
Condition~\ref{U1}\textup{(a1)} or~\ref{U2}, and use the
subquivers $\Sigma(v)$ and $C(v)$ defined in Remark \ref{rem:complementary_bound_subquiver}.
If $v$ is of Type~\textup{(I)}, then
Condition~\ref{U1}\textup{(a2)} states that, after removing $v$
from the underlying unoriented quiver, the two branches forming
$\Sigma(v)$ and the remaining branches forming $C(v)$ lie in
distinct connected components.

Define a directed graph $G_D$ whose vertex set consists exactly of the Type~\textup{(I)} and Type~\textup{(III)} non‑gentle vertices in $Q_0$. For two distinct vertices $u,v$ of $G_D$, introduce a directed edge $u\longrightarrow v$ if and only if there exist $\delta,\delta'\in\mathfrak{D}(A)$ satisfying
\[
\delta(w)=\delta'(w)\quad\forall\,w\neq u,\qquad \delta(u)\neq\delta'(u),
\]
and, with the deletion choices at all vertices other than $u$ held fixed, replacing the local deletion choice $\delta(u)$ at $u$ by $\delta'(u)$ destroys the extendability of the prescribed local symmetry at $v$ to a global label‑preserving bound‑quiver isomorphism. Consequently, whenever no edge $u\to v$ exists in $G_D$, altering the deletion datum at $u$ cannot destroy the global extendability of the local symmetry at $v$, no matter which deletion choices are fixed at the remaining vertices.

Suppose that $u\to v$. For the deletion data $\delta,\delta'$
witnessing this dependency edge, the two local deletion choices
described in Remark \ref{remark:transform non-gentle} give
$\delta(u)\cap\delta'(u)=\varnothing.$
Hence $\delta(u)\cup\delta'(u)$ consists precisely of the
2-relations whose deletion status changes when the choice
at $u$ is switched. At least one 2-relation in this union
contains an arrow belonging to $\Sigma(v)$. Otherwise, replacing
$\delta(u)$ by $\delta'(u)$ would change neither $\Sigma(v)$ nor
the relations and labels carried by it. By Remark \ref{rem:complementary_bound_subquiver},
the local isomorphism at $v$ would still glue with the identity on
$C(v)$, contradicting $u\to v$.

Inspecting one by one the local connection patterns permitted by
Figure~\ref{non-gentle-sys-1}, and combining this inspection with \ref{S1}, \ref{U1}\textup{(a2)}, and \ref{U3}, yields the following local conclusion: every
dependency edge $u\to v$ is of exactly one of the following two types.

\textup{(i)} \textbf{Single-branch type.}
        The two symmetric branches at $u$ both enter the same
        symmetric branch at $v$; equivalently, as viewed from the
        side of $v$, the vertex $v$ is connected to $u$ through a
        non-symmetric branch at $u$. In this case,
        \begin{equation}\label{eq:2.1}
        \Sigma(u)\subsetneq\Sigma(v).
        \end{equation}

\textup{(ii)} \textbf{Double-branch type.}
        The two symmetric branches at $u$ are connected,
        respectively, to the two symmetric branches at $v$. In
        this case, the full subquiver $\Sigma(v)$ contains $u$,
        while $\Sigma(u)$ contains $v$.

There is no third connection pattern, since otherwise the
in-degree or out-degree of one of the relevant vertices would
exceed two. A direct inspection of the two local deletion choices
in Figure~\ref{non-gentle-sys-1} further shows that, in the
double-branch case, if $u$ is of Type~\textup{(I)}, then
simultaneously interchanging both pairs of branches preserves each
local deletion choice at $u$ and therefore does not produce a
dependency edge $u\to v$. If both $u$ and $v$ are of
Type~\textup{(III)}, then the two pairs of symmetric branches form
one of the configurations excluded by Condition~\ref{U3} and
displayed in Figure~\ref{two III vertices}. Thus, a genuine
double-branch dependency edge can only be of the form
\begin{equation}\label{eq:2.2}
\textup{(III)}\longrightarrow\textup{(I)}.
\end{equation}

Suppose, for a contradiction, that $G_D$ contains a directed
cycle, and choose one having the smallest possible number of
vertices:
\begin{equation}\label{eq:2.3}
v_1\longrightarrow v_2\longrightarrow\cdots
\longrightarrow v_m\longrightarrow v_1,
\qquad m\geq2.
\end{equation}
This cycle has no repeated vertices. If a consecutive segment of
the cycle consists of single-branch edges,
\[
x_0\longrightarrow x_1\longrightarrow\cdots
\longrightarrow x_r,
\]
then \eqref{eq:2.1} gives
\[
\Sigma(x_0)\subsetneq\Sigma(x_1)\subsetneq\cdots
\subsetneq\Sigma(x_r).
\]
It therefore remains to distinguish three cases according to the
number of double-branch edges on the cycle.

If the cycle has no double-branch edge, then \eqref{eq:2.1} gives a closed
chain of strict inclusions, a contradiction. Suppose next that the
cycle has exactly one double-branch edge $u\to v$. By \eqref{eq:2.2}, $u$ is
of Type~\textup{(III)} and $v$ is of Type~\textup{(I)}. The two
branch connections corresponding to this double-branch edge reach
$u$ through the two branches contained in $\Sigma(v)$, whereas all
remaining edges on the cycle are of single-branch type. By the
local form of a single-branch edge, the connected branches
determined by these remaining edges return to $u$ through the
remaining branches contained in $C(v)$. Hence, in
$Q\setminus\{v\}$, the branches forming $\Sigma(v)$ remain
connected to the remaining branches forming $C(v)$, contrary to
Condition~\ref{U1}\textup{(a2)}.

Finally, suppose that the cycle has at least two double-branch
edges. Choose, in the directed order of the cycle, two such edges
$u_1\longrightarrow v_1$ and
$u_2\longrightarrow v_2$
so that the segment of the cycle from $v_1$ to $u_2$ contains no
other double-branch edge. Every edge on this segment is therefore
of single-branch type. By \eqref{eq:2.2}, $u_1$ and $u_2$ are of
Type~\textup{(III)}, whereas $v_1$ and $v_2$ are of
Type~\textup{(I)}. Trace, along the single-branch edges from $v_1$
to $u_2$, the corresponding connected branches in
Figure~\ref{non-gentle-sys-1}. If the two paths obtained by
starting from the two symmetric branches have a common vertex
other than their endpoints, then, in $Q\setminus\{v_1\}$, the
branches forming $\Sigma(v_1)$ remain connected to the remaining
branches forming $C(v_1)$, contrary to
Condition~\ref{U1}\textup{(a2)}. If the two paths have no common
vertex other than their endpoints, then they connect the two
Type~\textup{(III)} vertices $u_1$ and $u_2$. By
Condition~\ref{S1}, the four possible orientations at the two ends
are precisely the four configurations displayed in
Figure~\ref{two III vertices} and excluded by
Condition~\ref{U3}. All three cases are impossible. Therefore,
$G_D$ contains no directed cycle; that is, $G_D$ is a finite
directed acyclic graph.

We proceed by induction on the total number $n$ of  non-gentle vertices of Type (I) and Type (III) in $Q$.
If $n=0$, there is no deletion choice to make, and the assertion is
immediate. Let $n\geq 1$, and assume that the assertion holds when
the number of non-gentle vertices of Type~\textup{(I)} and
Type~\textup{(III)} under consideration is smaller than $n$.

Since $G_D$ is a finite directed acyclic graph, it has a sink,
that is, a vertex from which no directed edge starts. Fix such a
sink and denote it by $v^*$. Notice that $v^*$ is an actual vertex
of $Q$, so $v^*\in Q_0$. Making a deletion choice at $v^*$ changes
only the specified 2-relations passing through $v^*$ and
does not change the underlying quiver $Q$.

By the existential quantifier in the definition of a dependency
edge and its negation, the fact that $v^*$ is a sink means that,
regardless of which deletion choices have already been fixed at
the other vertices, switching the choice at $v^*$ does not destroy
the global extendability of the local symmetry at any remaining
non-gentle vertex. The connectivity condition in
Condition~\ref{U1}\textup{(a2)} is unchanged because $Q$ is
unchanged, and the underlying local configurations forbidden by
Condition~\ref{U3} cannot be created by this deletion. Therefore,
after either choice is made at $v^*$, the remaining $n-1$
non-gentle vertices still satisfy
Conditions~\ref{U1}--\ref{U3}.

Let $A_1=\mathbf{k}Q/I_1$ and $A_2=\mathbf{k}Q/I_2$
be the two intermediate string algebras arising from the two
choices at $v^*$. Condition~\ref{U1} or~\ref{U2} provides a local
label-preserving isomorphism interchanging these two choices. The
complementary bound subquivers on the two sides are identical, the
 relations crossing $v^*$ correspond under the local
symmetry, and the label data are preserved. Hence, by the
preceding remark, the local isomorphism glues to a global
label-preserving bound-quiver isomorphism
\begin{equation}\label{eq:2.4}
\Phi:(Q,I_1)\xrightarrow{\sim}(Q,I_2).
\end{equation}

In each of the two intermediate cases, only $n-1$ non-gentle
vertices remain to be treated. We may therefore apply the
induction hypothesis separately to both cases: all admissible
deletion data obtained by completing the remaining deletions in
$A_1$ are equivalent, and all admissible deletion data obtained by
completing the remaining deletions in $A_2$ are likewise
equivalent. Moreover, \eqref{eq:2.4} maps every remaining local
configuration, deletable relation, and label in $(Q,I_1)$ to the
corresponding local configuration, deletable relation, and label
in $(Q,I_2)$. Transporting the remaining deleted relations through
$\Phi$ therefore gives a bijection between the complete deletion
data on the two sides, and corresponding deletion data are
equivalent in the sense of
Definition~\ref{def:deletion_data_equiv}. Hence the two equivalence
classes obtained by induction are in fact the same equivalence
class.

Thus, $\delta\sim\delta'$ for all
$\delta,\delta'\in\mathfrak D(A)$. By
Lemma~\ref{Lem:geo-unique}, all
geometric models of $A$ are equivalent. Therefore, the geometric
model of $A$ is unique.
\end{proof}

\begin{remark}
Several remarks concerning Lemma \ref{three-conditions} are as follows:
\begin{enumerate}
    \item The quiver configuration illustrated in Figure \ref{non-gentle-sys-1} represents the most general case at each non-gentle vertex and its geometric realization is illustrated in Figure \ref{type II}.

    \item In condition \ref{U1}, $Q^{(1)}$ and $Q^{(2)}$, as well as $Q^{(3)}$ and $Q^{(4)}$, may share common vertices; in conditions \ref{U2} and \ref{U3}, $Q^{(1)}$ and $Q^{(2)}$ may share common vertices.

    \item Under Condition \ref{U1}:
    \begin{itemize}
        \item If only arrows $a$ and $c$ are identified, i.e., there exists a loop at vertex $v$, then vertex $v$ fails to satisfy conditions (a1) and (a2);
        \item If arrows $a$ and $c$ are identified, and arrows $b$ and $d$ are identified, i.e., there exist two loops at vertex $v$, then vertex $v$ fails to satisfy condition (a2).
    \end{itemize}

    \item Under Condition \ref{U2}, if arrows $a$ and $c$ (or $b$ and $c$) are identified, i.e., there exists a loop at vertex $v$, then vertex $v$ fails to satisfy Condition \ref{U2}.
\end{enumerate}
\begin{figure}[htbp]
	\begin{center}
    \begin{tikzpicture}[>=stealth, scale=0.8]
	\draw[line width=1pt, red] (0,2) --(0,-2);
	\draw[line width=2pt, red, dash dot] (-0.8,1.5) --(-1.5,2);
	\draw[line width=2pt, red, dash dot] (0.8,1.5) --(1.5,2);
	\draw[line width=2pt, red, dash dot] (-0.8,-1.5) --(-1.5,-2);
	\draw[line width=2pt, red, dash dot] (0.8,-1.5) --(1.5,-2);
	
	\draw[line width=1pt, red] (-2,1) to[out=0, in=-120](0,2)to[out=-60, in=180](2,1);
	\draw[line width=1pt, red] (-2,-1) to[out=0, in=120](0,-2)to[out=60, in=180](2,-1);
	
	\draw[blackarrow](0.4,1.5)to(0,1.5);
	\node [] at (0.2,1.3) {$\mathbf{a}$};	

	\draw[blackarrow](0,1.5)to(-0.4,1.5);
	\node [] at (-0.2,1.3) {$\mathbf{d}$};

	\draw[blackarrow](-0.4,-1.5)to(0,-1.5);
	\node [] at (-0.2,-1.25) {$\mathbf{b}$};	

	\draw[blackarrow](0,-1.5)to(0.4,-1.5);
	\node [] at (0.2,-1.3) {$\mathbf{c}$};
	
	\draw[bend left,bluearrow](0.7,1.3)to(-0.7,1.3);
	\draw[bend left,bluearrow](-0.7,-1.3)to(0.7,-1.3);
	
	\draw[red,thick,fill=red] (0,2) circle (0.07);
	\draw[red,thick,fill=red] (0,-2) circle (0.07);
	
	\node [purple] at (2,2) {$Q^{(1)}$};
	\node [purple] at (-2,-2) {$Q^{(2)}$};
	\node [purple] at (-2,2) {$Q^{(4)}$};
	\node [purple] at (2,-2) {$Q^{(3)}$};
\end{tikzpicture}
	\begin{tikzpicture}[>=stealth, scale=0.8]
	\draw[line width=1pt, red] (0,2) --(0,-2);
	\draw[line width=2pt, red, dash dot] (-0.8,1.5) --(-1.5,2);
	\draw[line width=2pt, red, dash dot] (0.8,1.5) --(1.5,2);
	\draw[line width=2pt, red, dash dot] (-0.8,-1.5) --(-1.5,-2);
	\draw[line width=2pt, red, dash dot] (0.8,-1.5) --(1.5,-2);
	
	\draw[line width=1pt, red] (-2,1) to[out=0, in=-120](0,2)to[out=-60, in=180](2,1);
	\draw[line width=1pt, red] (-2,-1) to[out=0, in=120](0,-2)to[out=60, in=180](2,-1);
	
	\draw[blackarrow](0.4,1.5)to(0,1.5);
	\node [] at (0.2,1.3) {$\mathbf{a}$};	

	\draw[blackarrow](0,1.5)to(-0.4,1.5);
	\node [] at (-0.2,1.3) {$\mathbf{d}$};

	\draw[blackarrow](-0.4,-1.5)to(0,-1.5);
	\node [] at (-0.2,-1.25) {$\mathbf{b}$};	

	\draw[blackarrow](0,-1.5)to(0.4,-1.5);
	\node [] at (0.2,-1.3) {$\mathbf{c}$};
	
	\draw[bend left,bluearrow](0.7,1.3)to(-0.7,1.3);
	
	\draw[red,thick,fill=red] (0,2) circle (0.07);
	\draw[red,thick,fill=red] (0,-2) circle (0.07);
	
	\node [purple] at (2,2) {$Q^{(1)}$};
	\node [purple] at (-2,-2) {$Q^{(2)}$};
	\node [purple] at (-2,2) {$Q^{(4)}$};
	\node [purple] at (2,-2) {$Q^{(3)}$};
\end{tikzpicture}
\begin{tikzpicture}[>=stealth, scale=0.8]
	\draw[line width=1pt, red] (0,2) --(0,-2);
	\draw[line width=2pt, red, dash dot] (0.8,1.5) --(1.5,2);
	\draw[line width=2pt, red, dash dot] (-0.8,-1.5) --(-1.5,-2);
	\draw[line width=2pt, red, dash dot] (0.8,-1.5) --(1.5,-2);
	
	\draw[, line width=1pt, red] (0,2)to[out=-60, in=180](2,1);
	\draw[, line width=1pt, red] (-2,-1) to[out=0, in=120](0,-2)to[out=60, in=180](2,-1);
	
	\draw[blackarrow](0.4,1.5)to(0,1.5);
	\node [] at (0.3,1.2) {$\mathbf{a}$};	

	\draw[blackarrow](-0.4,-1.5)to(0,-1.5);
	\node [] at (-0.2,-1.25) {$\mathbf{b}$};	

	\draw[blackarrow](0,-1.5)to(0.4,-1.5);
	\node [] at (0.2,-1.3) {$\mathbf{c}$};
	
	\draw[bend left, bluearrow](-0.7,-1.3)to(0.7,-1.3);
	
	\draw[red,thick,fill=red] (0,2) circle (0.07);
	\draw[red,thick,fill=red] (0,-2) circle (0.07);
	
	\node [purple] at (2,2) {$Q^{(1)}$};
	\node [purple] at (-2,-2) {$Q^{(2)}$};
	\node [purple] at (2,-2) {$Q^{(3)}$};
\end{tikzpicture}
		
	\end{center}
	\caption{Geometric realization of non-gentle vertices.}
	\label{type II}
\end{figure}
\end{remark}

Conversely, the sufficient conditions given in Lemma \ref{three-conditions} are also necessary for the uniqueness of the geometric model. We prove this by showing that when any one of these conditions is not met, the geometric model obtained will not be unique.

\begin{lemma}\label{reverse of three conditions}
Let \(A=\mathbf{k}Q/I\) be a string algebra containing non-gentle
vertices of Type~\textup{(I)} and/or Type~\textup{(III)}. If the
geometric model of \(A\) is unique, then the non-gentle vertices of
\(A\) satisfy Conditions~\ref{U1}, \ref{U2}, and~\ref{U3} of
Lemma~\ref{three-conditions}.
\end{lemma}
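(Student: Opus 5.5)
We argue by contraposition. Assume that at least one of \ref{U1}, \ref{U2}, \ref{U3} fails, and produce two admissible deletion data $\delta_1,\delta_2\in\mathfrak D(A)$ with $\delta_1\not\sim\delta_2$. Lemma~\ref{Lem:geo-unique} then gives two inequivalent geometric models. In every case we fix the forced deletions: all relations of length at least three, and the unique choice at each Type~(II) vertex. We also fix an arbitrary but common choice at every Type~(I) or Type~(III) vertex other than the one (or two) under consideration. So $\delta_1$ and $\delta_2$ differ only locally.

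\emph{Failure of \ref{U1} or \ref{U2} at a vertex $v$.} Let $\delta_1,\delta_2$ differ exactly at $v$, using the two local choices of Remark~\ref{remark:transform non-gentle}.
\begin{enumerate}[label=\textup{(\alph*)}]
\item Suppose $\delta_1\sim\delta_2$ via $\varphi$. By Remark~\ref{rem:label-perserve}, $\varphi$ restricts on $\Sigma(v)$ to a label-preserving bound-quiver isomorphism between the two local choices at $v$.
\item For Type~(III.a), the choice that deletes $ac$ keeps $bc$ as an R1 relation and creates a label starting in $Q^{(1)}$. The other choice swaps the roles of $Q^{(1)}$ and $Q^{(2)}$. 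Hence such a restriction must carry $(Q^{(1)},I|_{Q^{(1)}})$ onto $(Q^{(2)},I|_{Q^{(2)}})$ together with the labels, which is exactly \ref{U2}.
\item For Type~(I), the same argument forces an interchange of $Q^{(1)}$ with $Q^{(2)}$, or of $Q^{(3)}$ with $Q^{(4)}$, giving (a1).
\item If (a1) holds but (a2) fails, there is an undirected path in $Q\setminus\{v\}$ from $Q^{(1)}\cup Q^{(2)}$ to $Q^{(3)}\cup Q^{(4)}$. The local swap cannot be extended by the identity on $C(v)$ as in Remark~\ref{rem:complementary_bound_subquiver}, since the swapped branches are also branches of $C(v)$. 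Any global isomorphism would therefore also have to swap the other pair. On the surface, this means the angles at the marked point (or puncture) through which the path of arrows $a\cdot d$ or $b\cdot c$ passes would have to be exchanged while fixing the connecting path, which breaks the preservation of ordered labels.
\end{enumerate}
This last subcase is the one I expect to be the main obstacle. I would handle it by an invariant: compare, in the two models, the cyclic fan at $p_a$ with its labels, and show that the label-bearing fans are not homeomorphic relative to the connecting path. Example~\ref{ex:not unique} and the loop cases of the Remark after Lemma~\ref{three-conditions} are the model computations.

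\emph{Failure of \ref{U3}.} Let $v_1,v_2$ be Type~(III) vertices in one of the configurations of Figure~\ref{two III vertices}, with the branches $Q^{(1)},Q^{(2)}$ joining them. Each vertex has two choices, giving four data $\delta_{\epsilon_1\epsilon_2}$. Consider $\delta_{11}$, where both deletions use $Q^{(1)}$, and $\delta_{12}$, where they use different branches. In $\delta_{11}$ the two labels are supported on the same branch, which yields a single permitted path through $Q^{(1)}$ between $v_1$ and $v_2$. In $\delta_{12}$ the R1 relations alternate between the two branches. Equivalently, the number of labels passing through the tile or marked point that carries $Q^{(1)}$ is $2$ in one case and $1$ or $0$ in the other. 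This count is invariant under any label-preserving bound-quiver isomorphism fixing the configuration, so $\delta_{11}\not\sim\delta_{12}$. This holds even when $Q^{(1)}\cong Q^{(2)}$ satisfies \ref{U2}, because any symmetry must act on both vertices at once.

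In all cases $|\mathfrak D(A)/{\sim}|\ge 2$, which contradicts uniqueness by Lemma~\ref{Lem:geo-unique}.
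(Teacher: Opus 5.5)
Your treatment of the failures of \ref{U1}(a1), \ref{U2} and \ref{U3} follows the same pattern as the paper. The gap is in subcase (d), where \ref{U1}(a1) holds but (a2) fails, and it is the step you flag yourself.

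\textbf{The step in (d) is asserted, not proved.} The claim ``any global isomorphism would therefore also have to swap the other pair'' is not derived. That the local swap does not glue with the identity on $C(v)$ rules out one particular extension. It does not rule out every label-preserving bound-quiver isomorphism $(Q,J_{\delta_1},\mathcal L_{\delta_1})\to(Q,J_{\delta_2},\mathcal L_{\delta_2})$.

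Even granting the claim, the conclusion you draw is not the operative one. Swapping both pairs $a\leftrightarrow b$, $c\leftrightarrow d$ sends $\{ac,bd\}$ to itself and $\{ad,bc\}$ to itself, so such a map never interchanges the two local choices at all. What you must exclude is an isomorphism swapping exactly one pair, and that is where the exterior connection has to enter.

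Your fallback, comparing labelled fans at $p_a$ ``relative to the connecting path'', is not carried out. The underlying marked surfaces need not differ: in Example~\ref{ex-sys I} both models are discs with two holes with the same distribution of marked points. So the distinction has to come from a specified combinatorial invariant, and you have not given one.

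\textbf{The missing idea is such an invariant.} The paper uses the permitted-connection graph:
\begin{itemize}
\item its vertices are the four branches;
\item its edges are the fixed exterior connections, normalised after relabelling to $Q^{(1)}\rightsquigarrow Q^{(3)}$ and $Q^{(2)}\rightsquigarrow Q^{(4)}$, together with the two compositions at $v$ that are permitted in $J_\delta$.
\end{itemize}
One choice reproduces the exterior pairing and gives two components. The other crosses it and gives a single component. A bound-quiver isomorphism preserves relations and non-relations, so it induces a graph isomorphism, and the two data are inequivalent. In Example~\ref{ex-sys I} this is the difference between the maximal permitted threads $ace,\ bdf$ and $adf,\ bce$.

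You could instead try to complete your own line, by showing directly that no isomorphism interchanging the choices can swap exactly one pair once the branches are joined outside $v$. That argument would still have to be supplied.

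\textbf{A smaller point in your \ref{U3} case.} The number of labels through $Q^{(1)}$ is not invariant if an isomorphism exchanges $Q^{(1)}$ and $Q^{(2)}$, since such a map sends $\delta_{11}$ to $\delta_{22}$. Use the unordered distribution $\{2,0\}$ versus $\{1,1\}$ instead. This is the analogue of the paper's component sizes $3+1$ versus $2+2$.
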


\begin{proof}
We argue by contraposition. Suppose that at least one of
Conditions~\ref{U1}, \ref{U2}, and~\ref{U3} fails. We shall prove
that there exist two inequivalent admissible deletion data $\delta,\delta'\in\mathfrak D(A)$.
By the preceding classification of admissible local deletions at
non-gentle vertices, each local deletion choice made below can be
combined with fixed deletion choices at all the remaining non-gentle
vertices. Thus, all the deletion data constructed below belong to
\(\mathfrak D(A)\) and determine saturated locally gentle covers
$(Q,J_{\delta},\mathcal L_{\delta})$ and $(Q,J_{\delta'},\mathcal L_{\delta'})$.

We first specify the local invariant used in the proof. Consider the
distinguished full branches appearing in Figure~\ref{non-gentle-sys-1}
or Figure~\ref{two III vertices}. Their \emph{permitted-connection
graph} is the undirected graph whose vertices are these full branches
and whose edges are the local permitted connections arising from
non-relations in the relevant locally gentle ideal, together with the
fixed exterior permitted connections. Two branches will be called
\emph{permitted-connected} if the corresponding vertices lie in the
same connected component of this graph. Orientations are ignored in
this auxiliary graph. Thus, the orientation of a wavy arrow denoting
a permitted connection is purely schematic and may be reversed. In
the configuration of Figure~\ref{two III vertices}, its actual
orientation depends on whether each of the vertices \(v_1\) and
\(v_2\) is of Type~\textup{(III.a)} or Type~\textup{(III.b)}; this
does not affect the underlying undirected permitted connection.

A bound-quiver isomorphism preserves both relations and
non-relations. It therefore induces an isomorphism between the
corresponding permitted-connection graphs and preserves their
partitions into connected components. Moreover, by the
label-preserving convention in
Definition~\ref{def:deletion_data_equiv} and
Remark~\ref{rem:label-perserve}, once the deletion choices and label
data outside the local configuration have been fixed, every global
label-preserving bound-quiver isomorphism must be compatible with
these fixed data and hence induce an isomorphism of the local
permitted-connection graphs.

\smallskip
\noindent
\emph{Case 1: Condition~\ref{U1}\textup{(a1)} or
Condition~\ref{U2} fails.}

There exists a non-gentle vertex \(v\) of Type~\textup{(I)} or
Type~\textup{(III)} at which the local label-preserving symmetry
required in Lemma~\ref{three-conditions} does not exist. Choose
\(\delta,\delta'\in\mathfrak D(A)\) so that they make different local
deletion choices only at \(v\), while their deletion choices at every
other non-gentle vertex coincide.

By the classification of local deletions at vertices of
Types~\textup{(I)} and~\textup{(III)}, any label-preserving
isomorphism between the two labelled local bound quivers determined
by these choices would give precisely the local label-preserving
symmetry required by Condition~\ref{U1}\textup{(a1)} or
Condition~\ref{U2}. This contradicts the present assumption. Hence
the two labelled local bound quivers are not isomorphic.

If \(\delta\sim\delta'\), then, by
Definition~\ref{def:deletion_data_equiv}, there would exist a global
label-preserving bound-quiver isomorphism
\[
  \varphi\colon
  (Q,J_{\delta},\mathcal L_{\delta})
  \xrightarrow{\sim}
  (Q,J_{\delta'},\mathcal L_{\delta'}).
\]
Since \(\delta\) and \(\delta'\) agree at every non-gentle vertex
other than \(v\), the label-preserving convention forces the
restriction of \(\varphi\) to the local configuration at \(v\) to
induce a label-preserving isomorphism between the two local bound
quivers, a contradiction. Therefore, $\delta\not\sim\delta'$.

\smallskip
\noindent
\emph{Case 2: Condition~\ref{U1}\textup{(a2)} fails.}

If Condition~\ref{U1}\textup{(a1)} also fails, the conclusion follows
from Case~1. We may therefore assume that \(v\) satisfies
Condition~\ref{U1}\textup{(a1)} but fails
Condition~\ref{U1}\textup{(a2)}. Condition~\ref{U1}\textup{(a1)}
provides the local label-preserving symmetry of one of the two pairs
of branches in Figure~\ref{non-gentle-sys-1}. Since
Condition~\ref{U1}\textup{(a2)} fails, the two pairs of branches are
not separated in \(Q\setminus\{v\}\).

Checking the local connections allowed by
Figure~\ref{non-gentle-sys-1}, together with the degree restriction
in Condition~\ref{S1}, shows that the given symmetry propagates along
the exterior connections and determines the corresponding pairing of
the other two branches. After relabelling the symmetric branches if
necessary, there are two possible symmetric exterior pairings:
\[
\begin{array}{ll}
\textup{(A)} &
Q^{(1)}\rightsquigarrow Q^{(3)},\qquad
Q^{(2)}\rightsquigarrow Q^{(4)};\\[1.5mm]
\textup{(B)} &
Q^{(1)}\rightsquigarrow Q^{(4)},\qquad
Q^{(2)}\rightsquigarrow Q^{(3)}.
\end{array}
\]
Interchanging the indices \(3\) and \(4\) transforms one case into
the other, so it is enough to consider case~\textup{(A)}. Denote the
two fixed exterior permitted connections by
\[
  P_{31}\colon Q^{(3)}\rightsquigarrow Q^{(1)},
  \qquad
  P_{42}\colon Q^{(4)}\rightsquigarrow Q^{(2)}.
\]

Fix the deletion choices at all the other non-gentle vertices, and
choose
\[
  \delta_{(v)}=\{ca,bd\},
  \qquad
  \delta'_{(v)}=\{cd,ba\}.
\]
Here \(\delta_{(v)}\) and \(\delta'_{(v)}\) denote the 2-relations deleted from \(I\) at \(v\). Consequently,
\[
\begin{aligned}
  &ca,bd\notin J_{\delta},
  &&cd,ba\in J_{\delta},\\
  &cd,ba\notin J_{\delta'},
  &&ca,bd\in J_{\delta'}.
\end{aligned}
\]

For \(J_{\delta}\), the fixed exterior connections and the local
permitted connections at \(v\), arising respectively from the
non-relations \(ca\) and \(bd\), are
\[
  E_{\mathrm{ext}}=\bigl\{\{1,3\},\{2,4\}\bigr\},
  \qquad
  E_{\mathrm{loc}}^{\delta}
  =\bigl\{\{1,3\},\{2,4\}\bigr\}.
\]
Hence the corresponding permitted-connection graph has precisely the
two connected components
$\bigl\{Q^{(1)},Q^{(3)}\bigr\}$,
$\bigl\{Q^{(2)},Q^{(4)}\bigr\}$.
Equivalently, its connections are
\[
  Q^{(1)}\xrightarrow{\,ca\,}Q^{(3)}
  \xrightarrow{\,P_{31}\,}Q^{(1)},
  \qquad
  Q^{(2)}\xrightarrow{\,bd\,}Q^{(4)}
  \xrightarrow{\,P_{42}\,}Q^{(2)}.
\]

For \(J_{\delta'}\), the local permitted connections at \(v\),
arising from the non-relations \(cd\) and \(ba\), are
\[
  E_{\mathrm{loc}}^{\delta'}
  =\bigl\{\{1,4\},\{2,3\}\bigr\}.
\]
Together with the fixed exterior connections
\(E_{\mathrm{ext}}\), they place all four vertices in a single
connected component. Explicitly, the connection is
\[
  Q^{(1)}\xrightarrow{\,cd\,}Q^{(4)}
  \xrightarrow{\,P_{42}\,}Q^{(2)}
  \xrightarrow{\,ba\,}Q^{(3)}
  \xrightarrow{\,P_{31}\,}Q^{(1)}.
\]

Thus, the local permitted-connection graphs determined by the two
deletion choices have two and one connected components,
respectively. On the other hand, \(\delta\) and \(\delta'\) agree
away from \(v\), so all relations and label data outside the local
configuration remain fixed. If \(\delta\sim\delta'\), the resulting
global label-preserving bound-quiver isomorphism would induce an
isomorphism between these two local permitted-connection graphs.
This is impossible because a graph isomorphism preserves the number
of connected components. Hence, $\delta\not\sim\delta'$.

\smallskip
\noindent
\emph{Case 3: Condition~\ref{U3} fails.}

In this case, \((Q,I)\) contains two interacting non-gentle vertices
\(v_1\) and \(v_2\) of Type~\textup{(III)}, in one of the
configurations shown in Figure~\ref{two III vertices}. Fix the
deletion choices at all the remaining non-gentle vertices, and take
\[
  \delta_{(v_1)}=\{ca\},
  \qquad
  \delta'_{(v_1)}=\{ba\},
  \qquad
  \delta_{(v_2)}=\delta'_{(v_2)}=\{ef\}.
\]
It follows that
\[
\begin{aligned}
  &ca,ef\notin J_{\delta},
  &&ba,df\in J_{\delta},\\
  &ba,ef\notin J_{\delta'},
  &&ca,df\in J_{\delta'}.
\end{aligned}
\]

Let \(P_{bd}\) and \(P_{ce}\) denote the two fixed branches in
Figure~\ref{two III vertices} joining \(b\) to \(d\) and \(c\) to
\(e\), respectively, and let \(B_a\) and \(B_f\) denote the two
exterior branches determined by the arrows \(a\) and \(f\). In
\(J_{\delta}\), the two-paths \(ca\) and \(ef\) are permitted, so
the local permitted-connection graph contains the edges
\[
  \{B_a,P_{ce}\},
  \qquad
  \{P_{ce},B_f\}.
\]
Thus, its connected-component partition is
\[
  \bigl\{B_a,P_{ce},B_f\bigr\}
  \mathbin{\sqcup}
  \bigl\{P_{bd}\bigr\};
\]
in particular, its component sizes are \(3+1\). For the representative
arrow orientation displayed in Figure~\ref{two III vertices}, the
corresponding permitted paths are
\[
  B_a\xleftarrow{\,ca\,}P_{ce}
  \xrightarrow{\,ef\,}B_f.
\]
If either \(v_1\) or \(v_2\) is of the other type among
\textup{(III.a)} and \textup{(III.b)}, the corresponding arrow is
reversed. This does not affect the argument, which uses only the
induced undirected connections.

In \(J_{\delta'}\), the two-paths \(ba\) and \(ef\) are permitted,
and the local permitted-connection graph therefore contains the
edges
\[
  \{B_a,P_{bd}\},
  \qquad
  \{P_{ce},B_f\}.
\]
By Conditions~\ref{S1}, \ref{S2}, and~\ref{G1}, these are the only
permitted connections among the four distinguished full branches.
Thus, its connected-component partition is
\[
  \bigl\{B_a,P_{bd}\bigr\}
  \mathbin{\sqcup}
  \bigl\{P_{ce},B_f\bigr\},
\]
whose component sizes are \(2+2\). The two local
permitted-connection graphs are therefore not isomorphic.

Furthermore, \(\delta\) and \(\delta'\) both delete \(ef\) at
\(v_2\), and they make identical deletion choices at every other
non-gentle vertex. Hence the ordered label support corresponding to
\(ef\), as well as all relations and label data outside the local
configuration, remains fixed. A label-preserving bound-quiver
isomorphism must preserve this fixed label support. Since \(df\) is
the other relation retained at \(v_2\), the uniqueness of
path imposed by the string conditions also fixes the
relative positions of \(P_{bd}\), \(P_{ce}\), \(B_a\), and \(B_f\).
Therefore, every global label-preserving bound-quiver isomorphism
would induce an isomorphism between the two local
permitted-connection graphs, which is impossible by the preceding
comparison. Thus, $\delta\not\sim\delta'$.
The other three arrow orientations represented in
Figure~\ref{two III vertices} are treated identically after making
the corresponding substitutions of arrows and relations.

We have shown that whenever at least one of
Conditions~\ref{U1}, \ref{U2}, and~\ref{U3} fails, there exist two
inequivalent admissible deletion data. By
Lemma~\ref{Lem:geo-unique}, these deletion data determine two
inequivalent saturated  labelled geometric models in the
sense of Definition~\ref{def:equivalence_models}. Hence, if the
geometric model of \(A\) is unique, every non-gentle vertex of
Type~\textup{(I)} or Type~\textup{(III)} must satisfy
Conditions~\ref{U1}, \ref{U2}, and~\ref{U3}.
\end{proof}

We can directly obtain the following result and the proof of Theorem \ref{Mtheorem:object}  is complete.

\begin{theorem}\label{main-theorem}
Let $A$ be a string algebra. The geometric model of $A$ is unique if and only if every non-gentle vertex of Type $\mathrm{(I)}$ or Type $\mathrm{(III)}$ (if any) satisfies the three conditions of Lemma \ref{three-conditions}.
\end{theorem}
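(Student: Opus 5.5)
The theorem follows by assembling the lemmas already established in this section. The plan is to treat the degenerate cases first and then split the equivalence into its two implications. Throughout, uniqueness is understood up to the equivalence of Definition~\ref{def:equivalence_models}. By Lemma~\ref{Lem:geo-unique}, this is the same as saying that $\mathfrak D(A)/{\sim}$ is a singleton.

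\emph{Degenerate cases.} First suppose $A$ has no non-gentle vertex of Type~(I) or Type~(III). Then the condition in the statement is vacuous. There are two subcases:
\begin{itemize}
\item[(i)] If $A$ has no non-gentle vertices at all, then $\mathfrak D(A)$ consists of the single datum deleting all generating relations of length at least three. The model is therefore unique.
\item[(ii)] If every non-gentle vertex of $A$ is of Type~(II), Lemma~\ref{lem:non-II} gives uniqueness directly.
\end{itemize}
In both subcases the equivalence holds trivially.

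\emph{Sufficiency.} Now assume $A$ has at least one non-gentle vertex of Type~(I) or~(III). If every such vertex satisfies \ref{U1}, \ref{U2} and \ref{U3}, then Lemma~\ref{three-conditions} applies verbatim and yields uniqueness of the geometric model.

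\emph{Necessity.} Conversely, if the geometric model is unique, Lemma~\ref{reverse of three conditions} shows that every non-gentle vertex of Type~(I) or Type~(III) satisfies the three conditions.

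The two implications together give the theorem, and hence also Theorem~\ref{Mtheorem:object}. The only point to check is a matter of reading the statement correctly. Condition~\ref{U3} concerns pairs of Type~(III) vertices, so ``every non-gentle vertex satisfies the three conditions'' should be interpreted as ``the bound quiver $(Q,I)$ satisfies \ref{U1}--\ref{U3}.'' This is exactly the hypothesis of Lemma~\ref{three-conditions} and the conclusion of Lemma~\ref{reverse of three conditions}, so no additional argument is required beyond aligning these formulations.
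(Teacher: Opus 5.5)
Your proposal is correct and is essentially the paper's own argument: sufficiency comes from Lemmas~\ref{lem:non-II} and~\ref{three-conditions}, and necessity from Lemma~\ref{reverse of three conditions}. Two points in your write-up are small clarifications that the paper leaves implicit. First, you treat separately the case with no Type~(I)/(III) vertices, where the condition is vacuous and Lemma~\ref{reverse of three conditions} does not formally apply. Second, you read \ref{U3} as a global condition on $(Q,I)$.
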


\begin{proof}
The sufficiency follows directly from Lemma \ref{lem:non-II} and Lemma \ref{three-conditions}. Conversely, the necessity is immediately derived from Lemma \ref{reverse of three conditions}. This completes the proof.
\end{proof}

\section{String algebras over geometric models without red punctures}
\label{section:String algebras over geometric models of red punctures}
In this section, we show exactly when the geometric models of a string algebra have no red punctures, and when they always have them.

The red punctures appearing in Definition \ref{def:marked_surface} are in bijection with
infinite cyclic straight walks, see \cite[Remark~4.11(i)]{PPP19}. Within the framework of saturated labelled tiled surface models adopted in this paper, this correspondence is realized as a bijection between red punctures and \(J\)-permitted cycles, where \(J\) is a saturated locally gentle cover of \(A\) in the sense of
Definition~\ref{def:saturated_model}.

 Since the geometric model of a gentle algebra must contain no red punctures, it follows that for every geometric model of a string algebra to be free of red punctures, the algebra obtained after transforming all non-gentle vertices must be a gentle algebra.
\subsection{Gentle-bounded cycles}

 According to \cite{PPP19}, the red punctures in the geometric model of the locally gentle algebra are given by the corresponding cycles in the quiver, see Figure \ref{fig:cycle}. Consequently, only the cases of cycles need to be considered.

 \begin{figure}[htbp]
 	\begin{center}
 		\begin{tikzpicture}[>=stealth, scale=1]
 			\path 
 			(-5,1.5)  coordinate (1)
 			(-4,2) coordinate (2)
 			(-3,1.5) coordinate (3)
 			(-4,0)  coordinate (4)
 			(-6,-1.5) coordinate (5)
 			(-2,-1.5) coordinate (6)
 			(2,0)  coordinate (7)
 			(3.5,2) coordinate (8)
 			(5,0) coordinate (9)
 			(2.5,-2) coordinate (10)
 			(4.5,-2) coordinate (11)
 			;
 			
 			\draw 
 			(1) node[red] {$\bullet$}
 			(2) node[red] {$\bullet$}
 			(3) node[red] {$\bullet$}
 			(4) node[red] {$\bullet$}
 			(5) node[red] {$\bullet$}
 			(6) node[red] {$\bullet$}
 			(7) node[blue] {$v_{1}$}
 			(8) node[blue] {$v_{2}$}
 			(9) node[blue] {$v_{3}$}
 			(10) node[blue] {$v_{n}$}
 			(11) node[blue] {$v_{4}$}
 			;
 			
 			\draw[line width=1pt,red ] (4) to(1);
 			\draw[line width=1pt,red ] (4) to(2);
 			\draw[line width=1pt,red ] (4) to(3);
 			\draw[line width=1pt,red ] (4) to(5);
 			\draw[line width=1pt,red ] (4) to(6);
 			
 			\draw[line width=1pt,black,->] (2.2,0.2) to(3.3,1.8);
 			\draw[line width=1pt,black,->] (3.7,1.8) to(4.8,0.2);
 			\draw[line width=1pt,black,->] (5,-0.2) to(4.5,-1.8);
 			\draw[line width=1pt,black,->] (2.5,-1.8) to(2,-0.2);
            \draw[line width=1pt,black,->,dashed] (4.2,-2) to(2.8,-2);

 			\draw (-4.6,0.5) node[black,font=\tiny] {$\bf{x_{1}}$}; 
 			\draw (-4.25,1.2) node[black,font=\tiny] {$\bf{x_{2}}$}; 
 			\draw (-3.3,0.5) node[black,font=\tiny] {$\bf{x_{3}}$}; 
 			\draw (-2.7,-0.7) node[black,font=\tiny] {$\bf{x_{4}}$};
 			\draw (-5.3,-0.7) node[black,font=\tiny] {$\bf{x_{n}}$};
 			\draw (-4,-1) node[red] {\huge$\cdots$};

 			\draw (-4,-2.5) node {$\mathcal{S} = (\mathcal{S}, \mathsf{P})$}; 
 			\draw (4,-2.5) node {$Q = Q(\mathcal{S}, \mathsf{P})$};

 		\end{tikzpicture}
 	\end{center}
 	\caption{A red puncture in the geometric model arises from a cycle in the associated quiver $Q$.}
 	\label{fig:cycle}
 \end{figure}
 
We will introduce a class of special cycles in the following definition.

\begin{definition}\label{def:gentle-bounded}
    Let $A = \mathbf{k}Q/I$ be a string algebra. A primitive oriented cycle $\mathcal{C} = a_1 a_2 \dots a_n$ in $Q$ is called \textit{gentle-bounded} if there exists at least one index $i$ (with $1 \le i \le n$ and indices taken modulo $n$) such that $a_{i-1}a_i \in I$, and the vertex $v_i = t(a_{i-1}) = s(a_i)$ satisfies one of the following two conditions:
    \begin{enumerate}[label=(GB\arabic*), ref=(GB\arabic*)]
    \item $v_i$ is a gentle vertex; \label{GB1}
    
    \item $v_i$ is a non-gentle vertex of Type (II), and $a_{i-1}a_i$ is not the deleted relation when transforming $v_i$ into a gentle vertex, see Figure \ref{fig:gentle-bounded}. \label{GB2}
\end{enumerate}
\begin{figure}[htbp]
    \centering
    \begin{tikzpicture}[>=stealth, scale=1]
        \path 
        (-1.5,1.5)  coordinate (1)
        (1.5,1.5) coordinate (2)
        (0,0) coordinate (3)
        (-1.5,-1.5)  coordinate (4)
        (1.5,-1.5) coordinate (5)
        ;
        
        \draw 
        (1) node[] {$\bullet$}
        (2) node[] {$\bullet$}
        (3) node[blue] {$v_{i}$}
        (4) node[blue] {$v_{i-1}$}
        (5) node[blue] {$v_{i+1}$}
        ;
        
        \draw[line width=1pt,->] (-0.2,0.2) to(-1.3,1.3);
        \draw[line width=1pt,->] (1.3,1.3) to(0.2,0.2);
        
        \draw[line width=1pt,->] (-1.3,-1.3) to node[above left] {$a_{i-1}$} (-0.2,-0.2);
        \draw[line width=1pt,->] (0.2,-0.2) to node[above right] {$a_{i}$} (1.3,-1.3);

\draw[bend left,line width=1pt, dashed, ->] (1.2,-1.7) to (-1.2,-1.7);

\draw[bend left,line width=1pt, dashed, red] (0.5,-0.5) to (-0.5,-0.5);
\draw[bend left,line width=1pt, dashed, red] (-0.5,0.5) to (0.5,0.5);
\draw[bend left,line width=1pt, dashed, red] (0.5,0.5) to (0.5,-0.5); 
\end{tikzpicture}
    \hspace{2cm}
\begin{tikzpicture}[>=stealth, scale=1]
        \path 
        (-1.5,1.5)  coordinate (1)
        (1.5,1.5) coordinate (2)
        (0,0) coordinate (3)
        (-1.5,-1.5)  coordinate (4)
        (1.5,-1.5) coordinate (5)
        ;   
        \draw 
        (1) node[] {$\bullet$}
        (2) node[] {$\bullet$}
        (3) node[blue] {$v_{i}$}
        (4) node[blue] {$v_{i-1}$}
        (5) node[blue] {$v_{i+1}$}
        ;
        
        \draw[line width=1pt,->] (-0.2,0.2) to(-1.3,1.3);
        \draw[line width=1pt,->] (1.3,1.3) to(0.2,0.2);
        
        \draw[line width=1pt,->] (-1.3,-1.3) to node[above left] {$a_{i-1}$} (-0.2,-0.2);
        \draw[line width=1pt,->] (0.2,-0.2) to node[above right] {$a_{i}$} (1.3,-1.3);

\draw[bend left,line width=1pt, dashed, ->] (1.2,-1.7) to (-1.2,-1.7);

\draw[bend left,line width=1pt, dashed, red] (0.5,-0.5) to (-0.5,-0.5);
\draw[bend left,line width=1pt, dashed, red] (-0.5,0.5) to (0.5,0.5); 
\draw[bend left,line width=1pt, dashed, red] (-0.5,-0.5) to (-0.5,0.5); 
\end{tikzpicture}  
\begin{tikzpicture}[>=stealth, scale=1.5]
        \draw[line width=3pt,gray!50] (-1.5,-0.05) to (1.5,-0.05);
        \draw[line width=1.1pt,black] (-1.5,0) to (1.5,0);

        \path 
        (-1.5,0.2])  coordinate (1)
        (1.5,0.2) coordinate (2)
        (0,0) coordinate (3)
        (-0.8,1.5)  coordinate (4)
        (0.8,1.5) coordinate (5)
        ;
        
        \draw 
        (3) node[] {$\rpoint$}
        (0,1) node[red] {$\dots$};
        \draw (0.8,1) node[black,font=\scriptsize] {$\bf{x_{i-1}}$};
        \draw (0,2.2) node[black,font=\scriptsize] {$\bf{x_{i}}$};
        \draw (-0.8,1) node[black,font=\scriptsize] {$\bf{x_{i+1}}$};

        \draw[line width=1pt, red] (0,0) to[out=170, in=180, looseness=2] (0,2.4) to[out=0, in=10, looseness=2] (0,0);
       \draw[ line width=1pt, red] (3) to(1);
        \draw[ line width=1pt, red] (3) to(2);
       \draw[line width=1pt, red] (3) to(4);
        \draw[line width=1pt, red] (3) to(5);
  
        \draw[blackarrow] (0.2,0.4) to (0.8,0.3);
        \draw[blackarrow] (-0.8,0.3) to (-0.2,0.4);
        
        \draw (0.7,0.5) node[] {\tiny$\mathbf{a_{i-1}}$};
        \draw (-0.7,0.5) node[] {\tiny$\mathbf{a_{i}}$};   
       \draw[bend left,bluearrow] (-0.8,0.1) to(-0.14,0.2);
\end{tikzpicture}
\begin{tikzpicture}[>=stealth, scale=1.5]
        \draw[line width=3pt,gray!50] (-1.5,-0.05) to (1.5,-0.05);
        \draw[line width=1.1pt,black] (-1.5,0) to (1.5,0);

        \path 
        (-1.5,0.2])  coordinate (1)
        (1.5,0.2) coordinate (2)
        (0,0) coordinate (3)
        (-0.8,1.5)  coordinate (4)
        (0.8,1.5) coordinate (5)
        ;
        
        \draw 
        (3) node[] {$\rpoint$}
        (0,1) node[red] {$\dots$};
        \draw (0.8,1) node[black,font=\scriptsize] {$\bf{x_{i-1}}$};
        \draw (0,2.2) node[black,font=\scriptsize] {$\bf{x_{i}}$};
        \draw (-0.8,1) node[black,font=\scriptsize] {$\bf{x_{i+1}}$};

        \draw[line width=1pt, red] (0,0) to[out=170, in=180, looseness=2] (0,2.4) to[out=0, in=10, looseness=2] (0,0);
       \draw[ line width=1pt, red] (3) to(1);
        \draw[ line width=1pt, red] (3) to(2);
       \draw[line width=1pt, red] (3) to(4);
        \draw[line width=1pt, red] (3) to(5);
  
        \draw[blackarrow] (0.2,0.4) to (0.8,0.3);
        \draw[blackarrow] (-0.8,0.3) to (-0.2,0.4);
        
        \draw (0.7,0.5) node[] {\tiny$\mathbf{a_{i-1}}$};
        \draw (-0.7,0.5) node[] {\tiny$\mathbf{a_{i}}$};   
       \draw[bend left,bluearrow] (0.14,0.2) to(0.8,0.1);

\end{tikzpicture}

    \caption{Exactly two situations for a gentle-bounded cycle at a  (II)-non-gentle vertex $v_i$. The top panels depict the quivers, while the bottom panels illustrate the corresponding local geometric models.}
    \label{fig:gentle-bounded}
\end{figure}
\end{definition}

\begin{example}
    Consider the cycle $abc$ in $Q$ of Example \ref{ex:string-gentle algebra}. It is gentle-bounded under the ideal $I_3$ but not under $I_1$ or $I_2$, because the required relation of length two on the cycle is completely absent in $I_2$, and factors through a non-gentle vertex in $I_1$.
\end{example}
\begin{remark}
    It is worth noting that if a cycle $\mathcal{C}$ in $Q$ is not gentle-bounded, it only guarantees that $\mathcal{C}$ corresponds to the complete fan of a red puncture in at least one geometric model of $A$. This does not mean that $\mathcal{C}$ will correspond to the complete fan of a red puncture in every geometric model  of $A$, see Example \ref{ex:cycle-every surface}.
\end{remark}

After defining gentle-bounded cycles, we now look at how they affect the geometric models. A cycle that is not gentle-bounded gives a red puncture in at least one geometric model of $A$. However, if every cycle is gentle-bounded, then all geometric models of $A$ will have no red punctures. We have the following proposition.

\begin{proposition}\label{pro:cycle}
 	Let $A = \mathbf{k}Q/I$ be a string algebra. If  every cycle in $Q$ is gentle-bounded, then every geometric model of $A$ contains no red punctures.
 \end{proposition}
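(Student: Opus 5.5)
The plan is to use the bijection stated at the start of this section: for a saturated labelled tiled surface with underlying saturated locally gentle cover $J$, red punctures correspond bijectively to $J$-permitted cycles, that is, primitive oriented closed walks $a_1\cdots a_n$ with $a_ia_{i+1}\notin J$ for all $i$ (indices mod $n$). By Lemma \ref{Lem:geo-unique}, every geometric model of $A$ arises from some admissible deletion datum $\delta\in\mathfrak D(A)$ with $J=J_\delta$. It therefore suffices to fix an arbitrary $\delta$ and show that no primitive oriented cycle of $Q$ is $J_\delta$-permitted.

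Let $\mathcal C=a_1\cdots a_n$ be a primitive oriented cycle in $Q$. By hypothesis it is gentle-bounded, so there is an index $i$ with $a_{i-1}a_i\in I$ such that $v_i=t(a_{i-1})$ satisfies \ref{GB1} or \ref{GB2}. The key step is to show that $a_{i-1}a_i\in J_\delta$, i.e.\ that $\delta$ never deletes this relation.

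\begin{enumerate}
\item[(i)] \emph{Case \ref{GB1}.} An admissible deletion datum only deletes 2-relations at non-gentle vertices, together with relations of length at least $3$. Since $a_{i-1}a_i$ has length two and passes through the gentle vertex $v_i$, it is untouched by $\delta$. Hence it belongs to $J_\delta$.
\item[(ii)] \emph{Case \ref{GB2}.} By Remark \ref{remark:transform non-gentle}, at a Type (II) vertex the deletion is forced and removes exactly one prescribed relation. By \ref{GB2}, $a_{i-1}a_i$ is not that relation, so it survives in $J_\delta$ for every $\delta$.
\end{enumerate}

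In both cases $a_{i-1}a_i\in J_\delta$, so $\mathcal C$ is not $J_\delta$-permitted. As this holds for every primitive cycle and every $\delta$, the bijection gives that no model has a red puncture.

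Two details need checking. First, "every cycle" in the statement should be read as every primitive oriented closed walk. A permitted closed walk is primitive by definition, so non-primitive walks cause no trouble. Second, one must confirm that the bijection of \cite[Remark~4.11(i)]{PPP19} with infinite cyclic straight walks applies to $J_\delta$ itself, even when $\mathbf kQ/J_\delta$ is infinite dimensional, rather than to some other presentation. Labels (relations of type \ref{rel:R2}) do not create punctures; punctures depend only on the dissection, hence only on $J_\delta$.

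I expect the main obstacle to be justifying that the bijection with $J$-permitted cycles is exact for a saturated cover, particularly when loops or cycles pass through a vertex more than once. For that I would invoke the PPP19 correspondence directly, using the fact that any $J_\delta$-permitted closed walk is a cyclic straight walk.
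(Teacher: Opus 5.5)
Your proof is correct and follows the paper's argument: the same case split on \ref{GB1}/\ref{GB2} shows that the witnessing 2-relation $a_{i-1}a_i$ is never deleted, so it lies in every $J_\delta$ and no primitive cycle is $J_\delta$-permitted. The only difference is the final step: the paper deduces that $B_\delta$ is finite-dimensional (no primitive permitted cycle forces bounded permitted paths) and then uses that a finite-dimensional locally gentle algebra has no red punctures, whereas you apply the correspondence between red punctures and primitive $J$-permitted cycles directly.
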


\begin{proof}
Let $\delta \in \mathfrak{D}(A)$ be an arbitrary admissible deletion datum, and let $B_\delta = \mathbf{k}Q/J_\delta$ be the associated locally gentle algebra. We will show that $B_\delta$ is finite-dimensional by examining the permitted paths in its bound quiver.

Suppose that every primitive oriented cycle in $Q$ is gentle-bounded. Let $\mathcal{C} = a_1 a_2 \cdots a_n$ be an arbitrary primitive oriented cycle in $Q$. By Definition \ref{def:gentle-bounded}, there exists an index $i$ such that $a_{i-1}a_i \in I$, and the vertex $v_i = t(a_{i-1}) = s(a_i)$ satisfies either \ref{GB1} or \ref{GB2}. 
\begin{enumerate}
    \item If $v_i$ satisfies \ref{GB1}, it is a gentle vertex. The relations at gentle vertices are never removed during the construction of $B_\delta$; thus, $a_{i-1}a_i \in J_\delta$.
    \item If $v_i$ satisfies \ref{GB2}, it is a non-gentle vertex of Type (II), and $a_{i-1}a_i$ is strictly distinct from the relation chosen to be deleted at $v_i$. Thus, $a_{i-1}a_i \in J_\delta$.
\end{enumerate}

In either case, every primitive oriented cycle $\mathcal{C}$ contains at least one length-2 subpath that belongs to $J_\delta$. This implies that under any deletion datum $\delta$, there are no primitive permitted cycles in the bound quiver $(Q, J_\delta)$.

Since the quiver $Q$ is finite, and the ideal $J_\delta$ is generated by 2-relations, the total absence of primitive permitted cycles guarantees that there cannot exist permitted directed paths of arbitrary length in $(Q, J_\delta)$. Consequently, the locally gentle algebra $B_\delta = \mathbf{k}Q/J_\delta$ is finite-dimensional.

By the geometric correspondence established in Theorem \ref{string algebra--labelled tiling algebra}, the geometric model associated with a finite-dimensional locally gentle algebra contains no red punctures. Since $\delta$ was chosen arbitrarily, every geometric model of $A$ is free of red punctures.
\end{proof}

\begin{remark}
Note that a gentle-bounded cycle $\mathcal{C}$ cannot correspond to the complete fan of a red puncture, but it may still belong to one. In this case, the arrows of $\mathcal{C}$ only form a part of a complete fan, while the full complete fan of the red puncture must be given by another cycle that is not gentle-bounded, see Figure \ref{fig:cycle-complete fan}. 

\begin{figure}[htbp]
	\begin{center}
\begin{tikzpicture}[scale=0.45]
   \path 
    (140:6) coordinate (b1)
    (90:6)  coordinate (b2)
    (-50:6) coordinate (b3)
    (-140:6)coordinate (b4)
    (0:0) coordinate (b5)
    (25:2.5) coordinate (r5);
    
    \draw[thick,black, fill=white] 
    (r5) circle (0.15cm);
    
     \draw[thick,red, fill=red] 
    (b1) circle (0.15cm) (b2) circle (0.15cm)
    (b3) circle (0.15cm) (b4) circle (0.15cm)
     (b5) circle (0.15cm);

    \draw[line width=1pt,red] (b1) to (b5);
    \draw[line width=1pt,red] (b2) to (b5);
    \draw[line width=1pt,red] (b3) to (b5);
    \draw[line width=1pt,red] (b4) to (b5);
    \draw[line width=1pt,red]
    (b5) to[out=80,in=120] (4,2)
    to[out=-60,in=-30](b5);

    \draw(1,4) node[black] {$\mathbf{x_{i-1}}$};
    \draw(4.5,0) node[black] {$\mathbf{x_{i}}$};
   \draw(1.6,-3.7) node[black] {$\mathbf{x_{i+1}}$};
    \draw (-3,0) node[red] {\Huge\vdots};
 	\draw (-1.5,3) node[red] {\Huge\ldots};
 	\draw (0,-2.5) node[red] {\Huge\ldots};

    \draw (0,-7) node {$\mathcal{S} = (\mathcal{S}, \mathsf{P})$}; 
 	 
\end{tikzpicture}
\hspace{2cm}
\begin{tikzpicture}[>=stealth, scale=1]
        \path 
       
        (0,0) coordinate (3)
        (-1.5,-1.5)  coordinate (4)
        (1.5,-1.5) coordinate (5)
        ;
        
        \draw 
        (3) node[blue] {$v_{i}$}
        (4) node[blue] {$v_{i-1}$}
        (5) node[blue] {$v_{i+1}$}
        ;
 
        \draw[line width=1pt, ->] (-0.2,0.2) to[out=135, in=180] node[above left] {$\epsilon$} (0,1.5) to[out=0, in=45] (0.2,0.2);
   
        \draw[line width=1pt,->] (-1.3,-1.3) to node[above left] {$a_{i-1}$} (-0.2,-0.2);
        \draw[line width=1pt,->] (0.2,-0.2) to node[above right] {$a_{i}$} (1.3,-1.3);

\draw[bend left,line width=1pt, dashed, ->] (1.1,-1.7) to node[below] {$a_{i+1} \ldots a_{i-2}$}(-1.2,-1.7);

\draw[bend left,line width=1pt, dashed, red] (0.5,-0.5) to (-0.55,-0.5);
\draw[bend left,line width=1pt, dashed, red] (-0.4,0.6) to (0.45,0.6);
\draw (0,-3.6) node {$Q = Q(\mathcal{S}, \mathsf{P})$};
\end{tikzpicture}
	\end{center}
	\caption{ The cycle $\mathcal{C}_1 = a_1 \dots a_{i-1} a_i \dots a_n$ is gentle-bounded, which may form a fan on $\mathcal{S}$. In contrast, the cycle $\mathcal{C}_2 = a_1 \dots a_{i-1} \epsilon a_i \dots a_n$ is not gentle-bounded and forms a complete fan on $\mathcal{S}$.} \label{fig:cycle-complete fan}
\end{figure}

\end{remark}

\subsection{Geometric models without red punctures}
 
 In this section, we will characterize the string algebras without red punctures in all their surface models. 
 
 We first consider the case where the quiver is acyclic, which leads to the following lemma.

 \begin{lemma}\label{lem:acyclic}
 	Let $A = \mathbf{k}Q/I$ be a string algebra. If $Q$ is acyclic, then every geometric model of \( A \) contains no red punctures. 
 \end{lemma}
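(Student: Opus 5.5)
The plan is to reduce the statement to the correspondence between red punctures and permitted cycles recalled at the start of this section. Fix an arbitrary geometric model of $A$. By Lemma~\ref{Lem:geo-unique}, it comes from some admissible deletion datum $\delta\in\mathfrak D(A)$. Its underlying locally gentle bound quiver is then $(Q,J_\delta)$, where $J_\delta\subseteq I$ is a saturated locally gentle cover in the sense of Definition~\ref{def:saturated_model}. Under the correspondence recalled from \cite[Remark~4.11(i)]{PPP19}, the red punctures of this model are in bijection with the $J_\delta$-permitted cycles, that is, with primitive oriented closed walks $a_1\cdots a_n$ satisfying $a_ia_{i+1}\notin J_\delta$ for all $i$ (indices modulo $n$). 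So it suffices to show that there are no such walks.

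This step is immediate. A $J_\delta$-permitted cycle is in particular a primitive oriented closed walk in $Q$, and $Q$ is acyclic, so no oriented closed walk of positive length exists at all. Hence the set of $J_\delta$-permitted cycles is empty, and the model has no red punctures. Since $\delta$ was arbitrary, the conclusion holds for every geometric model.

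As a cross-check that avoids the puncture–cycle bijection, one can argue as in the proof of Proposition~\ref{pro:cycle}. Because $Q$ is finite and acyclic, every path in $Q$ has length at most $|Q_0|-1$. Then $R_Q^{|Q_0|}=0\subseteq J_\delta$, so $B_\delta=\mathbf{k}Q/J_\delta$ is finite-dimensional, hence gentle. The geometric model of a gentle algebra has no red punctures, since by construction in \cite{PPP19,BC24} the interior red points arise only from infinite cyclic straight walks.

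The only point needing care is the justification that red punctures arise \emph{only} from permitted cycles, which means invoking the cited correspondence correctly. Vacuously, this lemma is also the special case of Proposition~\ref{pro:cycle} in which there are no cycles to check, so one could simply cite that proposition.
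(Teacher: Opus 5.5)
Your proof is correct and follows the paper's argument: cite the correspondence from \cite{PPP19} between red punctures and (permitted) cycles in $Q$, and observe that an acyclic quiver has none. Your finite-dimensionality cross-check and your remark that the lemma is the vacuous case of Proposition~\ref{pro:cycle} are also valid.
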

\begin{proof}
As established in \cite{PPP19}, every red puncture in the geometric model corresponds to a cycle in the quiver $Q$ (see Figure \ref{fig:cycle}). Consequently, if $Q$ is acyclic, the absence of cycles ensures that the geometric model of $A$ admits no red punctures.
\end{proof}

 Conversely, if all geometric models are free of red punctures, the cycles in the quiver must satisfy a specific condition, as follows.
 
\begin{lemma}\label{lem:no-puncture}
Let $A = \mathbf{k}Q/I$ be a string algebra where $Q$ is not acyclic. If every geometric model of $A$ contains no red punctures, then every primitive oriented cycle in the underlying quiver $Q$ is gentle-bounded.
\end{lemma}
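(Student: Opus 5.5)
We argue by contraposition. Suppose some primitive oriented cycle $\mathcal C=a_1\cdots a_n$ in $Q$ is not gentle-bounded. Our goal is to construct an admissible deletion datum $\delta\in\mathfrak D(A)$ such that $\mathcal C$, or a primitive cycle built from it, is $J_\delta$-permitted. Then the bijection between red punctures and $J_\delta$-permitted cycles recalled at the start of this section (from \cite[Remark~4.11(i)]{PPP19}) produces a red puncture in the geometric model associated with $\delta$ via Lemma~\ref{Lem:geo-unique}.

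The first step is to examine each consecutive pair $a_{i-1}a_i$ on $\mathcal C$, with indices modulo $n$, together with its vertex $v_i$. If $a_{i-1}a_i\notin I$, then $a_{i-1}a_i\notin J_\delta$ for every $\delta$, because $J_\delta\subseteq I$. If $a_{i-1}a_i\in I$, then failure of gentle-boundedness means $v_i$ is non-gentle, and either $v_i$ is of Type~(II) with $a_{i-1}a_i$ the relation deleted there, or $v_i$ is of Type~(I) or (III). In the Type~(II) case the pair is automatically permitted in every $J_\delta$. In the Type~(III.a) case with incoming arrows $a,b$ and outgoing arrow $c$, Remark~\ref{remark:transform non-gentle} lets us delete either $ac$ or $bc$, so we choose to delete exactly $a_{i-1}a_i$. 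In the Type~(I) case, one of the two options $\{ac,bd\}$ or $\{ad,bc\}$ contains $a_{i-1}a_i$, and we choose that one. Relations of length at least three are deleted in any case. The remaining vertices receive arbitrary admissible choices.

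The main obstacle is consistency: a vertex $v$ may be visited several times by $\mathcal C$, and the required local choices might conflict. The first thing to check is what repetition at $v$ forces. Since $\mathcal C$ is primitive and each vertex has in- and out-degree at most two, a vertex can occur at most twice. If $v$ is of Type~(III.a), it has a single outgoing arrow $c$, so the two visits use pairs $ac$ and $bc$. If $v$ is of Type~(I), the two visits use either $\{ac,bd\}$, $\{ad,bc\}$, or a mixed pair such as $\{ac,bc\}$. Compatible pairs are realised by a single choice. For the incompatible cases, for instance $ac$ and $bc$ at a Type~(III.a) vertex, we do not ask $\mathcal C$ itself to be permitted. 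Instead we delete one of the two relations, say $ac$, and split $\mathcal C$ at $v$ into two shorter closed walks, each passing through $v$ once. The walk containing the segment through $ac$ is then a primitive cycle whose pairs all lie in the favourable situation.

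To make the splitting argument terminate, we induct on the length of a non-gentle-bounded primitive cycle, choosing $\mathcal C$ of minimal length among non-gentle-bounded primitive cycles. Each shorter walk produced is again primitive, so minimality should rule out conflicting repeated visits. We then have to check that the relevant sub-walk is not gentle-bounded, which holds because its pairs are pairs of $\mathcal C$ except at $v$, where the pair is the deleted one. This check, together with the Type~(I) mixed case, is the step that needs care. Once a $J_\delta$-permitted primitive cycle exists, $B_\delta$ is infinite-dimensional and its model contains a red puncture, contradicting the hypothesis. Hence every primitive oriented cycle is gentle-bounded.
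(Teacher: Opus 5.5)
Your plan is the paper's: take a non-gentle-bounded primitive cycle $\mathcal C$ of minimal length, force the choices at gentle and Type (II) vertices via the failure of \ref{GB1} and \ref{GB2}, and at each Type (I)/(III) vertex choose the local deletion containing the pair of $\mathcal C$ there. The gap is the compatibility step you flag yourself. It is not proved, and the reason you give for it is false.

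Primitivity does not bound the number of visits to a vertex. Suppose $v$ lies on two $2$-cycles $x_1y_1$ and $x_2y_2$. Then the walk $x_1y_1x_1y_1x_2y_2$ is primitive and passes through $v$ three times. Primitive closed walks may repeat arrows, and every conflict comes from a repeated arrow: two passages through a Type (III.a) vertex, or a mixed pair such as $\{ac,bc\}$ or $\{ac,ad\}$ at a Type (I) vertex. Your claim that each shorter sub-walk is again primitive also rests on this unproved bound. In general you must pass to a primitive root, which has the same cyclic length-two subpaths and so is still non-gentle-bounded.

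The missing step is the reduction the paper makes before building $\delta$: a minimal non-gentle-bounded cycle has no repeated arrow. Suppose $a_i=a_j$ with $j-i$ minimal. Then $W=a_i\cdots a_{j-1}$ is a closed walk whose closing pair is $a_{j-1}a_i=a_{j-1}a_j$. So every cyclic length-two subpath of $W$ is one of $\mathcal C$, and $W$ is primitive by minimality of $j-i$. Hence $W$ is a shorter non-gentle-bounded cycle, a contradiction.

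Your splitting at $v$ is this argument in disguise. It works only because, in the conflicting cases, the two passages share an arrow, so the closing pair of each sub-walk at $v$ is itself a pair of $\mathcal C$. It is not a new pair "at $v$". For non-sharing passages such as $\{ac,bd\}$ the closing pairs $bc$ and $ad$ would be new, but that case needs no splitting.

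Once repeated arrows are excluded, the rest is immediate:
\begin{enumerate}
\item By \ref{S1}, each vertex is visited at most twice.
\item A Type (III) vertex has a unique outgoing (or incoming) arrow, so it is visited at most once.
\item A Type (I) vertex visited twice uses distinct incoming arrows $a,b$ and distinct outgoing arrows $c,d$. The two required deletions are therefore $\{ac,bd\}$ or $\{ad,bc\}$, which is one of the admissible local choices.
\end{enumerate}
No splitting is needed: $\mathcal C$ itself is $J_\delta$-permitted, and the red puncture follows as you describe.
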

\begin{proof}
Suppose, for a contradiction, that every geometric model of \(A\)
contains no red puncture, but that \(Q\) contains a primitive
oriented cycle which is not gentle-bounded. Choose such a cycle of
minimal length and write
$\mathcal{C}=a_1a_2\cdots a_n$.

We first show that \(\mathcal{C}\) contains no repeated arrow.
Otherwise, after cyclically reindexing \(\mathcal{C}\), choose
\(a_i=a_j\), with \(1\leq i<j\leq n\), so that \(j-i\) is minimal
among all positive cyclic distances between two occurrences of the
same arrow. Then
$W=a_i a_{i+1}\cdots a_{j-1}$
is an oriented closed walk, since $t(a_{j-1})=s(a_j)=s(a_i)$. 

Every cyclic length-two subpath of \(W\) occurs in
\(\mathcal{C}\); in particular, its closing subpath satisfies $a_{j-1}a_i=a_{j-1}a_j$.
Hence \(W\) contains no witness for \ref{GB1} or
\ref{GB2}. Moreover, \(W\) is primitive: otherwise,
\(W=U^m\) for some \(m>1\), and the first arrow of \(W\) reappears after the subwalk $U$ with $\ell(U)<\ell(W)$, contrary to the choice of \(W\). Thus \(W\) is a primitive oriented cycle which is not gentle-bounded and satisfies \(\ell(W)<\ell(\mathcal{C})\), contradicting the minimality of \(\mathcal{C}\). Therefore, \(\mathcal{C}\) contains no repeated arrow.

We now construct an admissible deletion datum
$\delta\in\mathfrak{D}(A)$.
First delete all generating relations of length at least three. It therefore remains only to specify the local quadratic deletions at
the non-gentle vertices.

At any Type~\textup{(II)} non-gentle vertex \(v_i\) on
\(\mathcal{C}\), take its unique local deletion. If
\(a_{i-1}a_i\in I\), then \(a_{i-1}a_i\) must be precisely the
relation deleted at \(v_i\); otherwise, Condition~\ref{GB2} would
make \(\mathcal{C}\) gentle-bounded. If
\(a_{i-1}a_i\notin I\), then
\(a_{i-1}a_i\notin J_\delta\) .

At any Type~\textup{(I)} or Type~\textup{(III)} non-gentle vertex
\(v_j\) on \(\mathcal{C}\), the local classification gives $a_{j-1}a_j\in I$, and we choose a valid local deletion containing this relation. At all remaining non-gentle vertices, choose an arbitrary valid local deletion.

It remains to check that these local requirements are compatible.
Since \(\mathcal{C}\) contains no repeated arrow and \ref{S1} bounds both the in-degree and the out-degree by
\(2\), the cycle \(\mathcal{C}\) traverses each vertex at most
twice. A \textup{(III)}-non-gentle vertex has a unique
incoming or outgoing arrow and therefore cannot be traversed
twice. If \(\mathcal{C}\) traverses a \textup{(I)}-non-gentle vertex twice, the two passages use distinct incoming
arrows \(a,b\) and distinct outgoing arrows \(c,d\); hence the two
relations to be deleted are either \{ac,bd\} or 
\{ad,bc\}, which is exactly one of the two valid local deletion choices. Loops are counted simultaneously as incoming and outgoing arrows,
and the same argument applies. Finally, a \textup{(II)}-non-gentle vertex has only one possible deleted relation, so no conflict can arise there.

Since every 2-relation has a unique middle vertex, the choices made at distinct non-gentle vertices are independent. By the preceding classification of admissible local deletions, these
compatible choices, together with the already fixed deletions of
all generating relations of length $\ge 3$, determine a well-defined admissible deletion datum $\delta\in\mathfrak{D}(A)$.

Let $B_\delta=\mathbf{k}Q/J_\delta$ be the associated locally gentle algebra. By construction, no length-2 subpath of \(\mathcal{C}\) passing through a non-gentle
vertex belongs to \(J_\delta\). If \(v_k\) is a gentle vertex on
\(\mathcal{C}\), then $a_{k-1}a_k\notin I$; otherwise, \ref{GB1} would make \(\mathcal{C}\)
gentle-bounded. Since \(J_\delta\subseteq I\), it follows that no
cyclic length-2 subpath of \(\mathcal{C}\) belongs to
\(J_\delta\). Thus \(\mathcal{C}\) is a primitive
\(J_\delta\)-permitted cycle.

By the correspondence between red punctures and cyclic
equivalence classes of primitive \(J_\delta\)-permitted cycles,
the geometric model determined by \(\delta\) contains a red
puncture. This contradicts the assumption that every geometric
model of \(A\) contains no red puncture. Therefore, every primitive
oriented cycle in \(Q\) is gentle-bounded.
\end{proof}

Combining the results established above, we are now ready to prove Theorem \ref{thm:B}.

\begin{theorem}\label{thm:no-puncture}
Let \(A=\mathbf{k}Q/I\) be a string algebra. Then all geometric models of \(A\) contain no red puncture if and only if every primitive oriented cycle in \(Q\), if any, is
gentle-bounded.
\end{theorem}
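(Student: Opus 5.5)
The plan is to assemble Theorem~\ref{thm:no-puncture} from the three preceding results, splitting according to whether $Q$ has oriented cycles. For the ``if'' direction there are two subcases. If $Q$ is acyclic, Lemma~\ref{lem:acyclic} applies directly: there is no primitive oriented cycle, and every geometric model is free of red punctures. If $Q$ contains primitive oriented cycles, all of them gentle-bounded, we invoke Proposition~\ref{pro:cycle}. Here one small point needs checking: that proposition is phrased for ``every cycle'', while its proof only uses primitive ones. Any non-primitive closed walk $q^m$ shares its cyclic length-two subpaths with $q$, so a witness for \ref{GB1} or \ref{GB2} on $q$ serves for $q^m$ as well. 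Hence, for every $\delta\in\mathfrak D(A)$, the bound quiver $(Q,J_\delta)$ has no $J_\delta$-permitted cycle. By the bijection between red punctures and primitive $J_\delta$-permitted cycles recalled at the beginning of this section, the model attached to $\delta$ has no red puncture. By Lemma~\ref{Lem:geo-unique}, every geometric model arises from some $\delta$, so all models are free of red punctures.

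For the ``only if'' direction, again split on acyclicity. If $Q$ is acyclic, the condition ``every primitive oriented cycle is gentle-bounded'' holds vacuously; this is the content of the phrase ``if any''. Otherwise Lemma~\ref{lem:no-puncture} gives exactly the required statement. That lemma constructs, from a minimal non-gentle-bounded primitive cycle $\mathcal C$, a compatible deletion datum $\delta$ making $\mathcal C$ a $J_\delta$-permitted cycle, and hence a red puncture.

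The only step I expect to need care is matching the hypotheses. The relevant points are: primitive versus arbitrary cycles in Proposition~\ref{pro:cycle}; the fact that red punctures correspond to primitive permitted cycles up to cyclic equivalence, so only primitive cycles matter; and the fact that ``all geometric models'' ranges over saturated models only, which Lemma~\ref{Lem:geo-unique} identifies with $\mathfrak D(A)/{\sim}$. Once these are noted, the theorem follows by combining Lemma~\ref{lem:acyclic}, Proposition~\ref{pro:cycle} and Lemma~\ref{lem:no-puncture}, with no new combinatorics required.
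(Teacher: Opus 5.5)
Your proposal is correct and follows the paper's own proof: split on whether $Q$ is acyclic, apply Lemma~\ref{lem:acyclic} in the acyclic case, and otherwise use Proposition~\ref{pro:cycle} for sufficiency and Lemma~\ref{lem:no-puncture} for necessity. Your extra remarks (primitive versus arbitrary cycles in Proposition~\ref{pro:cycle}, and restricting to saturated models via Lemma~\ref{Lem:geo-unique}) are harmless bookkeeping that the paper leaves implicit.
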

\begin{proof}
If \(Q\) is acyclic, the assertion follows from Lemma \ref{lem:acyclic}, since \(Q\) contains no primitive oriented cycle. Suppose that \(Q\) is not acyclic. Then the sufficiency follows from Proposition \ref{pro:cycle}, whereas the necessity follows from Lemma \ref{lem:no-puncture}. This completes the proof.
\end{proof}

\section{String algebras over geometric models with red punctures}\label{section:String algebras over geometric models with red punctures}
In this section, we provide sufficient conditions ensuring every geometric model of a string algebra contains a red puncture.
 
For a string algebra, all its geometric models contain red punctures if and only if the algebra obtained by transforming all non-gentle vertices is an infinite-dimensional locally gentle algebra. According to \cite{PPP19}, only the cases of cycles need to be considered.

\subsection{Essential cycles and Common red punctures}

In this subsection, we first introduce \emph{essential cycles}, which guarantee red punctures appear in all geometric models. We further introduce common red punctures and show they are exactly determined by essential cycles.

\subsubsection{Essential cycles}
To define this structure, we  initially introduce the concept of a stable vertex.

\begin{definition}\label{def:stable_vertex}

Let $A = \mathbf{k}Q/I$ be a string algebra and $\mathcal{C} = v_1 \xrightarrow{a_1} v_2 \xrightarrow{a_2} \cdots \xrightarrow{a_{n-1}} v_n \xrightarrow{a_n} v_1$ be a primitive oriented cycle in $Q$. A vertex $v_i$ is called a \emph{stable vertex} with respect to $\mathcal{C}$ if it satisfies one of the following conditions:
\begin{enumerate}[label=(E\arabic*), ref=(E\arabic*)]
    \item $v_i$ is a gentle vertex, and $a_{i-1}a_i \notin I$.  \label{E1}
    \item $v_i$ is a non-gentle vertex of Type (II) with $a_{i-1}a_i \in I$, having the local configuration shown in the left panel of Figure \ref{fig:red puncture}. \label{E2}
    \item $v_i$ is a non-gentle vertex of Type (II) with $a_{i-1}a_i \notin I$, having the local configuration shown in the right panel of Figure \ref{fig:red puncture}. \label{E3}
\end{enumerate}
\begin{figure}[htbp]
    \centering

\begin{tikzpicture}[>=stealth, scale=0.8] 
        \path 
        (-1.5,1.5)  coordinate (1)
        (1.5,1.5) coordinate (2)
        (0,0) coordinate (3)
        (-1.5,-1.5)  coordinate (4)
        (1.5,-1.5) coordinate (5)
        (0,-2.5) coordinate (6)
        (0,-1.25) coordinate (7)
        ;
        
        \draw 
        (1) node[] {$\bullet$}
        (2) node[] {$\bullet$}
        (3) node[blue] {$v_{i}$}
        (4) node[blue] {$v_{i-1}$}
        (5) node[blue] {$v_{i+1}$}
        (7) node[purple] {$\mathcal{C}$}
        ;
        
         \draw[line width=1pt,->] (-0.2,0.2) to node[above right] {$\mathbf{a}$} (-1.3,1.3);
\draw[line width=1pt,->] (1.3,1.3) to node[above left] {$\mathbf{b}$} (0.2,0.2);

\draw[line width=1pt,->] (-1.3,-1.3) to node[above left] {$\mathbf{c}$} (-0.2,-0.2);
\draw[line width=1pt,->] (0.2,-0.2) to node[above right] {$\mathbf{d}$} (1.3,-1.3);
        \draw[bend left,line width=1pt, dashed, ->] (1.2,-1.7) to (-1.2,-1.7);

\draw[bend left,line width=1pt, dashed, red] (0.5,-0.5) to (-0.5,-0.5);
\draw[bend left,line width=1pt, dashed, red] (0.5,0.5) to (0.5,-0.5); 
\draw[bend left,line width=1pt, dashed, red] (-0.5,-0.5) to (-0.5,0.5); 
\draw (0,-2.7) node {\ref{E2}};
\end{tikzpicture} 
\hspace{2cm}
\begin{tikzpicture}[>=stealth, scale=0.8] 
        \path 
        (-1.5,1.5)  coordinate (1)
        (1.5,1.5) coordinate (2)
        (0,0) coordinate (3)
        (-1.5,-1.5)  coordinate (4)
        (1.5,-1.5) coordinate (5)
        (0,-2.5) coordinate (6)
        (0,-1.25) coordinate (7)
        ;
        
        \draw 
        (1) node[] {$\bullet$}
        (2) node[] {$\bullet$}
        (3) node[blue] {$v_{i}$}
        (4) node[blue] {$v_{i-1}$}
        (5) node[blue] {$v_{i+1}$}
        (7) node[purple] {$\mathcal{C}$}
        ;
        
         \draw[line width=1pt,->] (-0.2,0.2) to node[above right] {$\mathbf{a}$} (-1.3,1.3);
\draw[line width=1pt,->] (1.3,1.3) to node[above left] {$\mathbf{b}$} (0.2,0.2);

\draw[line width=1pt,->] (-1.3,-1.3) to node[above left] {$\mathbf{c}$} (-0.2,-0.2);
\draw[line width=1pt,->] (0.2,-0.2) to node[above right] {$\mathbf{d}$} (1.3,-1.3);
        \draw[bend left,line width=1pt, dashed, ->] (1.2,-1.7) to (-1.2,-1.7);

\draw[bend left,line width=1pt, dashed, red] (-0.5,0.5) to (0.5,0.5);
\draw[bend left,line width=1pt, dashed, red] (0.5,0.5) to (0.5,-0.5); 
\draw[bend left,line width=1pt, dashed, red] (-0.5,-0.5) to (-0.5,0.5); 
\draw (0,-2.7) node {\ref{E3}};
\end{tikzpicture} 

\begin{tikzpicture}[scale=0.3]
   \path 
    (140:6) coordinate (b1)
    (115:6)  coordinate (b2)
    (-50:6) coordinate (b3)
    (-140:6)coordinate (b4)
    (0:0) coordinate (b5)
    (25:6) coordinate (b6);

     \draw[thick,red, fill=red] 
    (b2) circle (0.15cm)
    (b3) circle (0.15cm) (b4) circle (0.15cm)
     (b5) circle (0.15cm) (b6) circle (0.15cm);

    \draw[line width=1pt,red] (b2) to (b5);
    \draw[line width=1pt,red] (b3) to (b5);
    \draw[line width=1pt,red] (b4) to (b5);
    \draw[line width=1pt,red] (b6) to (b5);
    \draw[bend left, line width=1pt,red] (b6) to (3,5);
    \draw[bend right, line width=1pt,red] (b6) to (6,-1);
    \draw[bend left,bluearrow](-0.5,1.3)to(1,-1.2);

    \draw(-0.5,4) node[black] {\scriptsize$\mathbf{x_{i-1}}$};
    \draw(3.5,0.9) node[black] {\scriptsize$\mathbf{x_{i}}$};
   \draw(1.6,-3.7) node[black] {\scriptsize$\mathbf{x_{i+1}}$};
    \draw (-2,1) node[red] {\Huge\vdots};
 	\draw (0,-2.5) node[red] {\Huge\ldots};

\end{tikzpicture}
\hspace{2cm}
\begin{tikzpicture}[scale=0.3]
   \path 
    (140:6) coordinate (b1)
    (115:6)  coordinate (b2)
    (-50:6) coordinate (b3)
    (-140:6)coordinate (b4)
    (0:0) coordinate (b5)
    (25:6) coordinate (b6);

     \draw[thick,red, fill=red] 
    (b2) circle (0.15cm)
    (b3) circle (0.15cm) (b4) circle (0.15cm)
     (b5) circle (0.15cm) (b6) circle (0.15cm);

    \draw[line width=1pt,red] (b2) to (b5);
    \draw[line width=1pt,red] (b3) to (b5);
    \draw[line width=1pt,red] (b4) to (b5);
    \draw[line width=1pt,red] (b6) to (b5);
    \draw[bend left, line width=1pt,red] (b6) to (3,5);
    \draw[bend right, line width=1pt,red] (b6) to (6,-1);
    \draw[bend left, bluearrow](5.1,1)to(4.1,3.1);

    \draw(-0.5,4) node[black] {\scriptsize$\mathbf{x_{i-1}}$};
    \draw(3.5,0.9) node[black] {\scriptsize$\mathbf{x_{i}}$};
   \draw(1.6,-3.7) node[black] {\scriptsize$\mathbf{x_{i+1}}$};
    \draw (-2,1) node[red] {\Huge\vdots};
 	\draw (0,-2.5) node[red] {\Huge\ldots};

\end{tikzpicture}

\caption{This figure shows the local quiver and geometric models for a stable (II)-non-gentle vertex $v_i$ on a cycle $\mathcal{C}$. Reversing all arrows at such vertices yields configurations satisfying the same conditions.}
\label{fig:red puncture}
\end{figure}
\end{definition}

\begin{remark}
Under condition \ref{E2} in Definition \ref{def:stable_vertex}, for a (II)-non-gentle vertex $v_i$ on $\mathcal{C}$, the 2-relation $a_{i-1}a_i$ is precisely the one chosen to be deleted to transform $v_i$ into a gentle vertex. This uniquely deleted relation corresponds to a label (Type \ref{rel:R2}) in the geometric model, as illustrated in Figure \ref{fig:red puncture}.
\end{remark}

Once the stable vertices are established, we can define the essential cycle  composed entirely of such vertices.

\begin{definition} \label{def:essential_cycle}
Let $A=\mathbf{k}Q/I$ be a string algebra. A primitive oriented cycle  $\mathcal{C} = v_1 \xrightarrow{a_1} v_2 \xrightarrow{a_2} \cdots \xrightarrow{a_{n-1}} v_n \xrightarrow{a_n} v_1$  in $Q$ is called an \emph{essential cycle} if every vertex in \(\mathcal{C}\) is a stable vertex. 
\end{definition}

Next, we analyze the geometric realization of the essential cycle, showing that its existence yields a red puncture in every geometric model of $A$. Specifically, we have the following proposition:

\begin{proposition}\label{prop: essential cycle}
    Let $A=\mathbf{k}Q/I$ be a string algebra. If the quiver $Q$ contains an essential cycle $\mathcal{C}$, then every geometric model of $A$ contains a red puncture. 
\end{proposition}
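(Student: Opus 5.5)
The plan is to show that the essential cycle $\mathcal{C}=a_1\cdots a_n$ is a primitive $J_\delta$-permitted cycle for \emph{every} admissible deletion datum $\delta\in\mathfrak D(A)$. The correspondence recalled at the start of Section~\ref{section:String algebras over geometric models of red punctures}, between red punctures and cyclic equivalence classes of primitive $J$-permitted cycles for a saturated locally gentle cover $J$, then gives a red puncture in the model of each $\delta$. Lemma~\ref{Lem:geo-unique} says every geometric model of $A$ arises from some $\delta$, so this proves the statement. The argument is the mirror image of Proposition~\ref{pro:cycle}. There, one vertex forced a relation into every $J_\delta$. Here, stability at every vertex must force \emph{no} cyclic $2$-subpath $a_{i-1}a_i$ of $\mathcal{C}$ into any $J_\delta$.

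Fix an arbitrary $\delta$ and a vertex $v_i$ of $\mathcal{C}$; we check $a_{i-1}a_i\notin J_\delta$ according to which of the three stability conditions holds.
\begin{itemize}
\item If \ref{E1} holds, then $a_{i-1}a_i\notin I$. Since $J_\delta\subseteq I$, also $a_{i-1}a_i\notin J_\delta$.
\item If \ref{E2} holds, then $v_i$ is of Type (II) and $a_{i-1}a_i\in I$. By Remark~\ref{remark:transform non-gentle}(1), the unique deletion at a Type (II) vertex is the single relation $ad$ whose complement pair is also a relation. The configuration in the left panel of Figure~\ref{fig:red puncture}, together with the remark following Definition~\ref{def:stable_vertex}, identifies $a_{i-1}a_i$ as exactly that relation. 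It is therefore deleted by every $\delta$, since Type (II) choices are forced.
\item If \ref{E3} holds, then $a_{i-1}a_i\notin I$, so again it cannot lie in $J_\delta$.
\end{itemize}
Type (II) deletions are identical for all $\delta$, and gentle vertices carry no choice at all. Hence the argument never depends on the choices made at Type (I) or (III) vertices. That is exactly why stability excludes those types.

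Because an essential cycle is primitive by definition, $\mathcal{C}$ is a primitive $J_\delta$-permitted cycle. This requires only closure modulo $n$, including the closing pair $a_na_1$ at $v_1$. Consequently $B_\delta=\mathbf{k}Q/J_\delta$ is infinite dimensional, since the powers $\mathcal{C}^m$ are nonzero, and its model has a red puncture whose complete fan is given by $\mathcal{C}$.

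The main obstacle is making the \ref{E2} case rigorous, i.e.\ confirming from the local configuration that $a_{i-1}a_i$ is precisely the forced deleted relation and not one of the three retained ones. I would settle this by writing the four $2$-paths at $v_i$ with $c=a_{i-1}$ and $d=a_i$ as in the figure, and checking which three lie in $I$. The only retained-pair configuration compatible with locally gentle \ref{G1} leaves $cd$ as the one to delete. The reversed-arrow variant is handled symmetrically. A secondary point is that $\mathcal{C}$ may pass through a vertex twice. Since each pass is checked independently by the rule above, this causes no conflict.
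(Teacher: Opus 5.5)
Your proposal is correct and follows the paper's proof essentially step for step. You fix an arbitrary saturated cover $J_\delta$, use \ref{E1}--\ref{E3} to show that each cyclic $2$-subpath $a_{i-1}a_i$ of $\mathcal{C}$ lies outside $J_\delta$ (with \ref{E2} being the forced Type (II) deletion), and conclude that $\mathcal{C}$ is a primitive $J_\delta$-permitted cycle, hence a red puncture.

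One small slip: at a Type (II) vertex, the deleted relation (e.g.\ $ad$ when $ac,ad,bd\in I$) is the one whose complementary pair ($bc$) is the \emph{non}-relation, not a relation. Your proposed \ref{G1}/\ref{S2} check at $v_i$ gives exactly this and confirms that $a_{i-1}a_i$ is deleted.
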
 

\begin{proof}
Let $\mathcal{C} = v_1 \xrightarrow{a_1} v_2 \xrightarrow{a_2} \cdots \xrightarrow{a_{n-1}} v_n \xrightarrow{a_n} v_1$ be an essential cycle in $Q$. Let $B=\mathbf{k}Q/J$ be any locally gentle algebra associated with $A$. We will show that no length-2 subpath of $\mathcal{C}$ belongs to $J$. By Definition \ref{def:essential_cycle}, every vertex on $\mathcal{C}$ is a stable vertex. Specifically:
\begin{itemize}
    \item At any gentle vertex $v_i$ on $\mathcal{C}$, condition \ref{E1} ensures $a_{i-1}a_i \notin I$, which naturally yields $a_{i-1}a_i \notin J$.
    \item At any non-gentle vertex $v_i$ on $\mathcal{C}$ satisfying condition \ref{E2} ($a_{i-1}a_i \in I$), the uniquely deleted relation chosen to transform $v_i$ into a gentle vertex lies precisely along $\mathcal{C}$. Thus, $a_{i-1}a_i$ is removed from $I$, ensuring $a_{i-1}a_i \notin J$ (corresponding to the label $r_1$ in Figure \ref{fig:geo-red puncture}).
    \item At any non-gentle vertex $v_i$ on $\mathcal{C}$ satisfying condition \ref{E3} ($a_{i-1}a_i \notin I$), the path naturally avoids the ideal, directly ensuring $a_{i-1}a_i \notin J$.
\end{itemize}

Since $J$ is generated strictly by relations of length 2, the fact that $a_{i-1}a_i \notin J$ for all $i$ guarantees that $\mathcal{C}$ is a primitive permitted cycle in the bound quiver $(Q, J)$. Consequently, $\mathcal{C}^m \neq 0$ in $B$ for all $m \ge 1$. Furthermore, any relations in $I$ of length $k \ge 3$ along $\mathcal{C}$ are removed during the construction of $B$ and merely manifest as labels (Type \ref{rel:R2}, see $r_2$ in Figure \ref{fig:geo-red puncture}). 

Thus, $B$ admits paths of arbitrary length and is infinite-dimensional. By the geometric correspondence, its geometric model must contain a red puncture. Since $J$ was chosen arbitrarily, every geometric model of $A$ contains a red puncture.
\begin{figure}[htbp]
    \begin{center}
          \begin{tikzpicture}[>=stealth, scale=0.8]
            \draw[line width=1pt, red] (0,2) --(0,-2);
            \draw[line width=1pt, red] (0,2) --(2,2);
            \draw[line width=1pt, red] (0,2) --(2,3.5);
            \draw[line width=1pt, red] (0,2) --(-0.5,4.1);
             \draw[line width=1pt, red] (0,2) --(0.5,4.1);
            \draw[line width=1pt, red] (0,2) --(-2,3);
            
            \draw (1,1.8) node[red] {\Huge\vdots};
             \draw (-1,1.8) node[red] {\Huge\vdots};
             \draw (1.5,2.6) node[red] {\Huge\vdots};

             \draw (-0.2,1.2) node[blue] {$r_1$};
             \draw (0,3) node[blue] {$r_2$};
             
             \draw[bluearrow] (-0.5, 2.25) arc[start angle=168, end angle=-5, radius=0.7];

            \draw[line width=1pt, red] (-2,1) to[out=0, in=-120](0,2)to[out=-60, in=180](2,1);
            \draw[, line width=1pt, red] (-2,-1) to[out=0, in=120](0,-2)to[out=60, in=180](2,-1);
            \draw[blackarrow](0.4,1.5)to(0,1.5);
            \node [] at (0.5,0.8) {\scriptsize$\mathbf{a_{i-1}}$};	

            \draw[blackarrow](0,1.5)to(-0.4,1.5);
            \node [] at (-0.5,0.8) {\scriptsize$\mathbf{a_{i}}$};

            \node [] at (2.4,0.8) {\scriptsize$\mathbf{x_{i-1}}$};
            \node [] at (-0.3,-0.45) {\scriptsize$\mathbf{x_{i}}$};
            \node [] at (-2.5,0.8) {\scriptsize$\mathbf{x_{i+1}}$};

            \draw[bend left,bluearrow](0.7,1.3)to(-0.7,1.2);
            
            \draw[red,thick,fill=red] (0,2) circle (0.07);
            \draw[red,thick,fill=red] (0,-2) circle (0.07);
            
        \end{tikzpicture}
        
    \caption{Local geometry of essential cycle \(\mathcal{C}\). \(r_2\): k-relation (\(k\ge3\)) on gentle vertices of \(\mathcal{C}\); \(r_1\): unique deleted 2-relation at Type (II) vertex \(v_i\). Both yield labels.}
    \label{fig:geo-red puncture}
    \end{center}
 \end{figure}

\end{proof}

\subsubsection{Common red punctures}
This gives rise to the notion of a \emph {common red puncture}, whose full characterization via essential cycles is provided below.
\begin{definition}\label{def:common-red-puncture}
Let \(A=\mathbf{k}Q/I\) be a string algebra. For each admissible deletion datum
\(\delta\in\mathfrak D(A)\), denote by \(J_\delta\) the corresponding
saturated locally gentle cover of $A$. Let \(\mathcal C\) be a primitive
oriented cycle in \(Q\). If \(\mathcal C\) is a
\(J_\delta\)-permitted cycle for every
\(\delta\in\mathfrak D(A)\), then its cyclic equivalence class $[\mathcal C]_{\mathrm{cyc}}$ is said to determine a \emph{common red puncture} of \(A\).
\end{definition}

\begin{theorem}\label{thm:common-red-essential}
Let \(A=\mathbf{k}Q/I\) be a string algebra. Then all geometric models of \(A\) have a common red puncture
if and only if \(Q\) contains an essential cycle.
\end{theorem}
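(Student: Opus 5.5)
Both directions go through Definition \ref{def:common-red-puncture} and the bijection between red punctures of the model attached to $\delta\in\mathfrak D(A)$ and cyclic classes of primitive $J_\delta$-permitted cycles. Thus a common red puncture exists if and only if there is a primitive oriented cycle $\mathcal C$ that is $J_\delta$-permitted for every $\delta$.

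\emph{Sufficiency.} This is essentially the content of Proposition \ref{prop: essential cycle}. Let $\mathcal C=a_1\cdots a_n$ be an essential cycle. The argument there shows that, for an arbitrary $\delta$, no $a_{i-1}a_i$ lies in $J_\delta$, checked vertex by vertex via \ref{E1}--\ref{E3}. Hence $\mathcal C$ is $J_\delta$-permitted for every $\delta$, and its class $[\mathcal C]_{\mathrm{cyc}}$ is a common red puncture. I will redo that check in the language of deletion data, noting that a Type (II) vertex has a unique local deletion. Under \ref{E2}, this deletion is exactly $a_{i-1}a_i$. Under \ref{E3}, $a_{i-1}a_i\notin I\supseteq J_\delta$.

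\emph{Necessity.} Suppose $\mathcal C$ is $J_\delta$-permitted for all $\delta$. I will show that every vertex $v_i$ of $\mathcal C$ is stable, by cases on the type of $v_i$.
\begin{enumerate}
\item If $v_i$ is gentle, the 2-relations through $v_i$ are never deleted, so $a_{i-1}a_i\in I$ would give $a_{i-1}a_i\in J_\delta$. Hence $a_{i-1}a_i\notin I$, which is \ref{E1}.
\item If $v_i$ is of Type (II), the deletion is forced. If $a_{i-1}a_i\in I$, it must be the deleted relation; this is \ref{E2}. Otherwise we are in \ref{E3}. What has to be checked is that these are exactly the local configurations of Figure \ref{fig:red puncture}, up to reversing arrows.
\item If $v_i$ is of Type (I) or (III), every 2-path through $v_i$ lies in $I$. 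By Remark \ref{remark:transform non-gentle}, there are two local choices, and the relation $a_{i-1}a_i$ is deleted in only one of them: in Type (III.a), $\{ac\}$ versus $\{bc\}$; in Type (I), $\{ac,bd\}$ versus $\{ad,bc\}$.
\end{enumerate}
In case (3), I choose $\delta$ to take the other local choice at $v_i$ and arbitrary choices elsewhere. This is allowed because choices at distinct non-gentle vertices are independent, each 2-relation having a unique middle vertex, as in the proof of Lemma \ref{lem:no-puncture}. Then $a_{i-1}a_i\in J_\delta$, a contradiction.

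The main obstacle is case (3) when $\mathcal C$ passes through a Type (I) vertex twice. The choice $\{ad,bc\}$ may kill the first passage while $\{ac,bd\}$ kills the second, but one of the two choices keeps both passages, so the simple contradiction fails. I plan to handle this by first reducing to a cycle without repeated arrows, as in Lemma \ref{lem:no-puncture}: if an arrow repeats, the shorter closed subwalk $W$ is also permitted for every $\delta$. I then argue directly with the two passages through $v_i$ with incoming arrows $a,b$ and outgoing arrows $c,d$.
\begin{itemize}
\item If the passages are $ac$ and $bd$, the choice $\{ad,bc\}$ keeps $ac,bd\in J_\delta$.
\item If the passages are $ad$ and $bc$, the choice $\{ac,bd\}$ keeps $ad,bc\in J_\delta$.
\end{itemize}
So some $\delta$ kills both passages, and the contradiction stands. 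A Type (III) vertex is traversed at most once, as noted in Lemma \ref{lem:no-puncture}.

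Loops are treated as an incoming plus an outgoing arrow, per Remark \ref{remark:transform non-gentle}(2). Once all vertices of $\mathcal C$ are shown to be stable, $\mathcal C$ is an essential cycle.
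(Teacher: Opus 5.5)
Your argument is correct and is essentially the paper's proof. Sufficiency is read off from Proposition~\ref{prop: essential cycle}, and necessity is the same vertex-by-vertex case analysis, with Type~(I)/(III) vertices excluded by choosing, via Remark~\ref{remark:transform non-gentle}, the local deletion that keeps $a_{i-1}a_i$ in $J_\delta$.

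The double-passage ``obstacle'' is not real, however. $\mathcal C$ already fails to be $J_\delta$-permitted as soon as one cyclic length-two subpath lies in $J_\delta$, so that simple choice works however often $\mathcal C$ visits $v_i$. Your sentence claiming the simple contradiction fails is therefore mistaken. The reduction to cycles without repeated arrows is also unnecessary here; it is needed in Lemma~\ref{lem:no-puncture}, where all passages must be made permitted simultaneously.
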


\begin{proof}
The sufficiency follows directly from the proof of
Proposition~\ref{prop: essential cycle}: the same essential cycle
\(\mathcal C\) is \(J_\delta\)-permitted for every
\(\delta\in\mathfrak D(A)\).

Conversely, suppose that
\([\mathcal C]_{\mathrm{cyc}}\) determines a common red puncture, and
write
\[
    \mathcal C=
    v_1\xrightarrow{a_1}v_2\xrightarrow{a_2}\cdots
    \xrightarrow{a_{n-1}}v_n\xrightarrow{a_n}v_1,
\]
We proceed by analyzing the subpath \(a_{i-1}a_i\) along the cycle.

\noindent\textbf{Case 1.} \(a_{i-1}a_i\) does not belong to $I$.
  Since \(J_\delta \subseteq I\) holds for every admissible deletion datum \(\delta\), we have \(a_{i-1}a_i \notin J_\delta\).
  \begin{itemize}
      \item If \(v_i\) is a gentle vertex, then \(v_i\) satisfies \ref{E1}.
      \item If \(v_i\) is a non-gentle vertex of Type~\textup{(II)}, then \(v_i\) satisfies \ref{E3}.
  \end{itemize}

\noindent\textbf{Case 2.} \(a_{i-1}a_i\) belongs to $I$. 
  \begin{itemize}
    \item  \(v_i\) cannot be a gentle vertex. Assume for contradiction that \(v_i\) is gentle. Then \(a_{i-1}a_i \in J_\delta\), contradicting the premise that \(\mathcal{C}\) is \(J_\delta\)-permitted for all admissible deletion data \(\delta\). Therefore \(v_i\) is non-gentle.
    \item Since \(\mathcal{C}\) is \(J_\delta\)-permitted for all \(\delta\),  \(a_{i-1}a_i\) cannot belong to any \(J_\delta\).
     Consequently, \(a_{i-1}a_i\) must be the unique 2-relation deleted at \(v_i\) during the transformation, so \(v_i\) satisfies \ref{E2}.
\end{itemize}

Finally, \(v_i\) cannot be a  non-gentle vertex of Type~\textup{(I)} or
Type~\textup{(III)}. By Remark \ref{remark:transform non-gentle}, we can always choose a local deletion at \(v_i\) for which
$a_{i-1}a_i\in J_\delta$, a contradiction. The configurations involving loops described in Remark~\ref{remark:transform non-gentle} are also covered by this local classification.

Therefore, every vertex of \(\mathcal C\) is stable, and \(\mathcal C\) is an essential cycle.
\end{proof}

\subsection{Shuttle cycles and Coupled cycles}
In general, the primitive \(J_\delta\)-permitted cycle giving rise to such a puncture may depend on the admissible deletion datum \(\delta\). Different choices of \(\delta\) lead to distinct primitive oriented cycles, all of which are \(J_\delta\)-permitted and produce red punctures. We therefore introduce two types of cycles: \emph{shuttle cycles} and \emph{multi-coupled cycles}.

\subsubsection{Shuttle cycles}
We now introduce the definition of shuttle cycles.

\begin{definition}\label{def:shuttle_cycle}
Let $A = \mathbf{k}Q/I$ be a string algebra. A subquiver $\mathcal{Q}$ of $Q$ is called a \emph{shuttle cycle} (see Figure \ref{fig: shuttle cycles}), if:
\begin{itemize}
    \item It consists of a sequence of distinct vertices $V = \{v_1, v_2, \dots, v_n\}$ ($n \ge 3$) and a set of antiparallel arrows 
    \[
    E = \{a_i: v_i \to v_{i+1}, \; b_i: v_{i+1} \to v_i \mid 1 \le i \le n-1\};
    \]
    \item The vertex $v_1$ is a stable vertex with respect to the primitive oriented cycle formed by $a_1$ and $b_1$, and $v_n$ is a stable vertex with respect to the primitive oriented cycle formed by $a_{n-1}$ and $b_{n-1}$;
    \item Every intermediate vertex $v_i$ ($1 < i < n$) is either a gentle vertex, a Type (I) non-gentle vertex, or a Type (II) non-gentle vertex, with at least one such vertex being non-gentle.
\end{itemize}
\begin{figure}[htbp]
    \centering

\begin{tikzcd}
\mathcal{Q}: & v_1 \arrow[rr, "a_1", shift left=2] &  & v_2 \arrow[ll, "b_1", shift left] \arrow[rr, "a_2", shift left=2] &  & v_3 \arrow[ll, "b_2", shift left] \arrow[rrr, dashed, shift left=2] &  &  & v_{n-1} \arrow[lll, dashed, shift left] \arrow[rr, "a_{n-1}", shift left=2] &  & v_n \arrow[ll, "b_{n-1}", shift left]
\end{tikzcd}
\vspace{1.2cm}
\begin{tikzpicture}[scale=0.4]
   \path 
    (0,3)+(60:5) coordinate (b1)
    (0,3)+(120:5)  coordinate (b2)
    (0,3)+(0:6) coordinate (b3)
    (0,3)+(180:6)coordinate (b4)
    (0,3)+(-60:5) coordinate (b5)
    (0,3)+(-120:5) coordinate (b6)
    ;

     \draw[thick,red, fill=red] 
    (b1) circle (0.15cm) (b2) circle (0.15cm)
    (b3) circle (0.15cm) (b4) circle (0.15cm)
     ;

    \draw[line width=1pt,red] (b2) to (b1);
    \draw[,line width=1pt,red,dashed] (b3) to (b1);
    \draw[,line width=1pt,red,dashed] (b4) to (b2);
    \draw[bend left,line width=1pt,red] (b4) to (b6);
    \draw[bend right,line width=1pt,red] (b3) to (b5);
    
    \draw(0,6.5) node[black] {$\mathbf{x_{i}}$};
    \draw(4.3,1) node[black] {$\mathbf{x_{1}}$};
    \draw(-4.3,1) node[black] {$\mathbf{x_n}$};

\end{tikzpicture}
\hspace{2cm}
\begin{tikzpicture}[scale=0.4]
   \path 
    (0:0) coordinate (b1)
    (60:4)  coordinate (u1)
    (120:4) coordinate (u2)
    (-60:3)coordinate (u3)
    (-120:3) coordinate (u4)    
    
     ;
      
     \draw[thick,red, fill=red] 
    (b1) circle (0.15cm);

          \draw[,line width=1pt,red] (b1) to (u1);
          \draw[bend right,line width=1pt,red] (b1) to (u3);
          \draw[bend left,line width=1pt,red,dashed] (u1) to (u3);
        \draw[line width=1.5pt] (2, 3.4641) to[out=70, in=110, looseness=2.5] (3.0, 3.7641); 
        \draw[line width=1.5pt] (1.7, 3.8641) to[out=-60, in=-66, looseness=2] (2, 3.4641) to[out=-70, in=48, looseness=2] (3.0, 3.7641) to[out=48, in=50, looseness=2] (3.3,4.0641);
         \node[black] at (0,4) {\Huge\ldots};
         \draw[,line width=1pt,red] (b1) to (u2);
          \draw[bend left,line width=1pt,red] (b1) to (u4);
          \draw[bend right,line width=1pt,red,dashed] (u2) to (u4);
         
       \draw[line width=1.5pt] (-3, 3.1641) to[out=70, in=110, looseness=2.5] (-2, 3.4641); 
       \draw[line width=1.5pt] (-3.3, 3.5641) to[out=-60, in=-66, looseness=2] (-3, 3.1641) to[out=-70, in=48, looseness=2] (-2, 3.4641) to[out=48, in=50, looseness=2] (-1.7,3.7641);

     \coordinate (m1) at (5.5,5.2) ;
     \draw[bend right,line width=1pt,red] (b1) to (m1);
     \draw[bend left,line width=1pt,red] (b1) to (-5.5,5.2);
     \node[red] at (0.1,2.2) {\Huge\ldots};

     \draw[line width=1.2pt, blue, dashed, ->=bluearrow] (-0.5,0.25) arc[start angle=160, end angle=-100, radius=0.5] arc[start angle=-110, end angle=-165, radius=1] ;

    \draw(-5,2.4) node[black] {$\mathbf{x_{1}}$};
    \draw(-1.5,-3) node[black] {$\mathbf{x_{2}}$};
    \draw(1.5,-3) node[black] {$\mathbf{x_{n-1}}$};
    \draw(5,2.4) node[black] {$\mathbf{x_{n}}$};
      
\end{tikzpicture}
\vspace{-1cm}
\caption{The shuttle cycle and its geometric realization.}
\label{fig: shuttle cycles}
\end{figure}
\end{definition}

\begin{remark}\label{remark:shuttle_n_ge_3}
We strictly require $n \ge 3$. The structural analogues for $n=1$ and $n=2$ naturally correspond to other established configurations: 
\begin{itemize}
    \item When $n=1$, the analogous subquiver is a single vertex with two loops, as discussed in Example \ref{ex:two_loops}.
    \item When $n=2$, the subquiver consists of exactly two vertices $v_1$ and $v_2$. By the shuttle cycle definition, both must be stable vertices with respect to the 2-cycle formed by $a_1$ and $b_1$. Thus, the entire subquiver is precisely an essential cycle.
\end{itemize}
\end{remark}

With the definition of shuttle cycles established, we now show that their presence guarantees a red puncture in every geometric model.

\begin{proposition}\label{prop:shuttle_cycle}
Let $A = \mathbf{k}Q/I$ be a string algebra. If the quiver $Q$ contains a shuttle cycle $\mathcal{Q}$, then every geometric model of $A$ contains a red puncture.
\end{proposition}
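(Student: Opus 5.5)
We fix an arbitrary admissible deletion datum $\delta\in\mathfrak D(A)$ with saturated locally gentle cover $J_\delta$. The goal is to exhibit, inside the shuttle cycle $\mathcal Q$, a primitive oriented cycle that is $J_\delta$-permitted. This suffices: by the bijection between red punctures and cyclic classes of primitive $J_\delta$-permitted cycles recalled at the start of Section~\ref{section:String algebras over geometric models of red punctures}, the geometric model attached to $\delta$ (via Lemma~\ref{Lem:geo-unique}) then contains a red puncture. As $\delta$ is arbitrary, every model does. Unlike Proposition~\ref{prop: essential cycle}, the permitted cycle will be allowed to depend on $\delta$.

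The candidate cycles are closed walks in $\mathcal Q$ that run right along the $a$-arrows from $v_1$ to some $v_k$, turn back along the $b$-arrows, and close up at $v_1$. Other candidates turn at intermediate points. The turning points are controlled as follows.

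\emph{Endpoints.} At $v_1$, stability with respect to the 2-cycle $a_1b_1$ gives $b_1a_1\notin J_\delta$; symmetrically, at $v_n$ we get $a_{n-1}b_{n-1}\notin J_\delta$. The argument is the same as in the proof of Proposition~\ref{prop: essential cycle}, case by case over \ref{E1}--\ref{E3}.

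\emph{Intermediate vertices.} An intermediate vertex $v_i$ has exactly the four arrows $a_{i-1},b_i$ (incoming) and $a_i,b_{i-1}$ (outgoing). Since it is gentle in $J_\delta$, exactly two of the four compositions $a_{i-1}a_i$, $a_{i-1}b_{i-1}$, $b_ia_i$, $b_ib_{i-1}$ are permitted, one for each incoming arrow, using distinct outgoing arrows. So there are only two local patterns at $v_i$:
\begin{itemize}
\item \emph{straight}: $a_{i-1}a_i,\ b_ib_{i-1}\notin J_\delta$;
\item \emph{reflecting}: $a_{i-1}b_{i-1},\ b_ia_i\notin J_\delta$.
\end{itemize}
These claims follow from \ref{S1}, \ref{S2} and local gentleness. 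For a Type (I) vertex, either deletion choice gives one of the two patterns. For a gentle or Type (II) vertex, the pattern is forced by $I$ (respectively by the unique deletion).

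\emph{Construction.} Let $k>1$ be the smallest index such that $v_k$ is reflecting, or $k=n$ if there is none. Then $v_2,\dots,v_{k-1}$ are straight, so
\[
\mathcal C_\delta=a_1a_2\cdots a_{k-1}\,b_{k-1}\cdots b_1
\]
is $J_\delta$-permitted at every vertex:
\begin{itemize}
\item along the $a$-run and the $b$-run, by straightness of $v_2,\dots,v_{k-1}$;
\item at the turn $v_k$, by the reflecting pattern, or by the stability of $v_n$ when $k=n$;
\item at the closing vertex $v_1$, by the stability of $v_1$.
\end{itemize}
The walk is primitive because it visits $v_k$ exactly once and the vertices are distinct. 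This gives the red puncture.

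\emph{Main obstacle.} The delicate step is the local claim that an intermediate vertex is always straight or reflecting. Two things must be checked. First, the degenerate "mixed" patterns, such as $a_{i-1}a_i$ and $b_ia_i$ both permitted, must be excluded by \ref{S2}. Second, in a Type (II) vertex the retained permitted pair after deleting the unique relation must actually be one of the two patterns. I would check this by listing which three of the four compositions lie in $I$ and comparing with Remark~\ref{remark:transform non-gentle}. Finally, I would verify that the endpoint stability conditions refer to precisely the compositions $b_1a_1$ and $a_{n-1}b_{n-1}$, with the relevant orientation conventions.
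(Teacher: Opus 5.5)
Your proof is correct and is essentially the paper's argument: both rest on the fact that, in the locally gentle bound quiver $(Q,J_\delta)$, each arrow of $\mathcal Q$ has a unique permitted successor inside $\mathcal Q$. At the intermediate vertices this comes from \ref{S2} and \ref{G1}; at the two ends it comes from stability, which gives $b_1a_1,\ a_{n-1}b_{n-1}\notin J_\delta$. The paper simply iterates this to an infinite permitted path and concludes from infinite-dimensionality, whereas you make the resulting cycle explicit and then invoke the bijection between red punctures and primitive $J_\delta$-permitted cycles directly. Starting from $a_1$, the paper's path is exactly $\mathcal C_\delta$ repeated; your straight/reflecting dichotomy additionally uses the second half of \ref{S2} to guarantee the return along the $b$-arrows.
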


\begin{proof}
Let $B = \mathbf{k}Q/J$ be an arbitrary locally gentle algebra associated with $A$. We will construct an infinite non-zero path in $B$ strictly supported on $\mathcal{Q}$.

For any  vertex $v_i$ ($1 < i < n$) in $\mathcal{Q}$, there are exactly two incoming and two outgoing arrows within $\mathcal{Q}$. Regardless of whether \(v_i\) is gentle or non-gentle (Type I or II), the admissible deletion datum ensures that $(Q,J)$ satisfies \ref{G1}  and \ref{S2}. Consequently, for any incoming arrow at $v_i$, there exists a unique outgoing arrow in $\mathcal{Q}$ such that the resulting length-$2$ path does not belong to $J$. The endpoints $v_1$ and $v_n$ are stable vertices with respect to the primitive oriented cycles formed by $\{a_1,b_1\}$ and $\{a_{n-1},b_{n-1}\}$, respectively, which yields $b_1 a_1 \notin J$ and $a_{n-1} b_{n-1} \notin J$. 

Consequently, for any given arrow in $\mathcal{Q}$, there exists a uniquely determined subsequent arrow in $\mathcal{Q}$ such that their composition does not belong to $J$. Starting with an arbitrary arrow $p_1 \in \mathcal{Q}$, we can inductively append these unique valid paths to form an infinite sequence of arrows $p = p_1 p_2 p_3 \cdots$. 

Because $B$ is a locally gentle algebra, its ideal $J$ is generated strictly by relations of length 2. By our inductive construction, no length-2 subpath of $p$ belongs to $J$, meaning $p$ is an infinite non-zero path in $B$. This implies $B$ is infinite-dimensional, ensuring that its geometric model contains a red puncture. Since $B$ was chosen arbitrarily, every geometric model of $A$ inevitably contains a red puncture.
\end{proof}

\subsubsection{Multi-Coupled cycles}
We introduce multi-coupled cycles, which consist of multiple overlapping coupled cycles.

\begin{definition}\label{def:couple_cycle}
Let $A = \mathbf{k}Q/I$ be a string algebra. A subquiver $\mathcal{C}_{cou}$ of $Q$ is called a \emph{coupled cycle} if it consists of two primitive oriented cycles $C_1$ and $C_2$ traversing a set of shared vertices $\{v_1, \dots, v_k\}$ in one of the following two situations (see Figure \ref{fig:coupled cycle}), where $n, m \ge k \ge 1$:

\begin{enumerate}[label=(C\arabic*), ref=(C\arabic*)]
    \item \label{C1}  Both cycles traverse $v_1, \dots, v_k$ in the same direction:
    \begin{align*}
        \mathcal{C}_1 &= v_1 \xrightarrow{a_1} v_2 \xrightarrow{a_2} \dots \xrightarrow{a_{k-1}} v_k \xrightarrow{a_k} v_{k+1} \xrightarrow{a_{k+1}} \dots \xrightarrow{a_{n-1}} v_n \xrightarrow{a_n} v_1, \\
       \mathcal{C}_2 &= v_1 \xrightarrow{b_1} v_2 \xrightarrow{b_2} \dots \xrightarrow{b_{k-1}} v_k \xrightarrow{b_k} v'_{k+1} \xrightarrow{b_{k+1}} \dots \xrightarrow{b_{m-1}} v'_m \xrightarrow{b_m} v_1.
    \end{align*}

    \item \label{C2} Both cycles traverse $v_1, \dots, v_k$ in opposite directions:
    \begin{align*}
        \mathcal{C}_1 &= v_1 \xrightarrow{a_1} v_2 \xrightarrow{a_2} \dots \xrightarrow{a_{k-1}} v_k \xrightarrow{a_k} v_{k+1} \xrightarrow{a_{k+1}} \dots \xrightarrow{a_{n-1}} v_n \xrightarrow{a_n} v_1, \\
        \mathcal{C}_2 &= v_1 \xleftarrow{b_1} v_2 \xleftarrow{b_2} \dots \xleftarrow{b_{k-1}} v_k \xleftarrow{b_k} v'_{k+1} \xleftarrow{b_{k+1}} \dots \xleftarrow{b_{m-1}} v'_m \xleftarrow{b_m} v_1.
    \end{align*}
\end{enumerate}
Here, the vertex sets $\{v_1, \dots, v_k\}$, $\{v_{k+1}, \dots, v_n\}$, and $\{v'_{k+1}, \dots, v'_m\}$ are pairwise disjoint, and the explicitly labeled parallel or anti-parallel arrows within the shared path are strictly distinct ($a_i \neq b_i$ for all $1 \le i \le k-1$). Furthermore, $\mathcal{C}_{cou}$ must satisfy the following conditions:
\begin{itemize}
    \item At least one vertex among the shared vertices $v_1, \dots, v_k$ is a non-gentle vertex of Type (I) or Type (II).
    \item The non-shared vertices $\{v_{k+1}, \dots, v_n\}$ (if $n > k$) are stable vertices with respect to \ref{C1}, and $\{v'_{k+1}, \dots, v'_m\}$ (if $m > k$) are stable vertices with respect to \ref{C2}. If $n=k$ (resp., $m=k$), $a_k$ (resp., $b_k$) is an arrow directed from $v_k$ to $v_1$.
\end{itemize}

\begin{figure}[htbp]
    \centering
\begin{tikzcd}[column sep=2em, row sep=2em]
& & v'_j \arrow[lld, dashed] & & & & & v'_j \arrow[lld, dashed, shift right] & & \\
v_{1} \arrow[r, shift left] \arrow[r, shift right] & v_2 \arrow[rr, dashed, shift left] \arrow[rr, dashed, shift right] & & v_{k-1} \arrow[r, shift left] \arrow[r, shift right] & v_{k} \arrow[llu, dashed] \arrow[lld, dashed] & v_{1} \arrow[r, shift left] \arrow[rrd, dashed, shift right] & v_2 \arrow[rr, dashed, shift left] \arrow[l, shift left] & & v_{k-1} \arrow[r, shift left] \arrow[ll, dashed, shift left] & v_{k} \arrow[llu, dashed, shift right=2] \arrow[l, shift left] \\
& & v_l \arrow[llu, dashed] & & & & & v_l \arrow[rru, dashed, shift right] & &
\end{tikzcd}
    \caption{Quiver configurations for the two cases of a coupled cycle.}
    \label{fig:coupled cycle}
\end{figure}
\end{definition}

\begin{remark}\label{rem:couple_special_cases}
A coupled cycle exhibits rich combinatorial properties and may intrinsically encompass other structures:
\begin{itemize}
    \item When \(n = m = k\), the two cycles completely overlap without branching into non-shared stable paths; its geometric model is shown in Figure \ref{fig:geo-sing algebra}. 
    \item It may contain a shuttle cycle as a subquiver. Specifically, as illustrated in the right diagram of Figure \ref{fig:coupled cycle}, when $k\ge 4$, the bound full subquiver of $Q$ formed by vertices $v_2,\dots,v_{k-1}$ inherently constitutes a shuttle cycle.
   
\end{itemize}
\end{remark}

\begin{figure}[htbp]
    \begin{center}
     \begin{tikzpicture}[scale=0.35]
            \begin{scope}[xshift=0,yshift=10cm]
                \draw[line width=1pt,fill=white] (0,0) circle (7);
                
                \path (-130:3) coordinate (b1) (-90:5) coordinate (b2)
                (-80:7) coordinate (m1)
                (-90:7) coordinate (m2)
                (-110:7) coordinate (m3)
                (-135:7) coordinate (m4)
                ;
                \draw[thick,red, fill=red] (b1) circle (0.15) (b2) circle (0.15) ;
                \draw[thick,black, fill=white] (0,2.5) circle (0.15) ;
                                            
               \draw[line width=1pt,red] (b1) to (-4.2, 0.5);
               \draw[bend right,line width=1pt,red,dashed] (-4.2, 0.5) to (m1);
                
                \draw[line width=1.5pt] (-5.2, 0.2) to[out=70, in=110, looseness=2.5] (-4.2, 0.5); 
                \draw[line width=1.5pt] (-5.5,0.6) to[out=-60, in=-66, looseness=2] (-5.2, 0.2) to[out=-70, in=48, looseness=2] (-4.2, 0.5) to[out=48, in=50, looseness=2] (-3.9,0.8);
                \node[black] at (-2.65,0.5) {\Huge\ldots};

              \draw[line width=1pt,red] (b1) to (-0.5, 0.5);
                 \draw[,line width=1pt,red,dashed] (-0.5, 0.5) to [out=100,in=80,looseness=2] (-5.7,0.5) to [out=-100,in=150] (m2);
                \draw[line width=1.5pt] (-1.5, 0.2) to[out=70, in=110, looseness=2.5] (-0.5, 0.5); 
                \draw[line width=1.5pt] (-1.8,0.6) to[out=-60, in=-66, looseness=2] (-1.5, 0.2) to[out=-70, in=48, looseness=2] (-0.5, 0.5) to[out=48, in=50, looseness=2] (-0.2,0.8);
                
               \draw[bend right,line width=1pt,red] (b1) to (2, 0.5);
               \draw[,line width=1pt,red,dashed] (2, 0.5) to [out=100,in=80,looseness=2] (-6,0.5) to [out=-100,in=130] (m3);
                \draw[line width=1.5pt] (1.0, 0.2) to[out=70, in=110, looseness=2.5] (2.0, 0.5); 
                \draw[line width=1.5pt] (0.7,0.6) to[out=-60, in=-66, looseness=2] (1.0, 0.2) to[out=-70, in=48, looseness=2] (2.0, 0.5) to[out=48, in=50, looseness=2] (2.3,0.8);
                \node[black] at (3.3,0.5) {\Huge\ldots};
                
               \draw[bend right,line width=1pt,red] (b1) to (4.5,0.2);
               \draw[,line width=1pt,red,dashed] (4.5,0.2) to [out=100,in=80,looseness=2] (-6.3,0.5) to [out=-100,in=120] (m4);
                \draw[line width=1.5pt] (4.5, 0.2) to[out=70, in=110, looseness=2.5] (5.5, 0.5); 
                \draw[line width=1.5pt] (4.2,0.6) to[out=-60, in=-66, looseness=2] (4.5, 0.2) to[out=-70, in=48, looseness=2] (5.5, 0.5) to[out=48, in=50, looseness=2] (5.8,0.8);

               \draw[bend left,line width=1pt,red] (b2) to (m1);
               \draw[bend left,line width=1pt,red] (b2) to (m2);
               \draw[bend left,line width=1pt,red] (b2) to (m3);
               \draw[bend left,line width=1pt,red] (b2) to (m4);

                \draw[line width=1.2pt, blue, dashed, ->=bluearrow] (-1.4,-2.2) arc[start angle=-10, end angle=-200, radius=0.6]arc[start angle=-200, end angle=-360, radius=0.7];            

                \draw[line width=1.2pt, blue, dashed, ->=bluearrow] (-0.25,-5.4) arc[start angle=-120, end angle=-340, radius=0.5] arc[start angle=-340, end angle=-475, radius=0.7];

               \node[red] at (-4.7,-1.6) {\Huge$\kern-0.1em.\kern-0.1em.\kern-0.1em.$};
                \node[red] at (3,2) {\Huge$\kern-0.1em.\kern-0.1em.\kern-0.1em.$};
                
                \node[black] at (-1.8,-0.8) {\scriptsize$\mathbf{x_{n}}$};
                \node[black] at (1.8,-1) {\scriptsize$\mathbf{x_{1}}$};
                \node[black] at (-3.5,-1) {\scriptsize$\mathbf{x_{i}}$};
                \node[black] at (4.3,-1.5) {\scriptsize$\mathbf{x_{i-1}}$};
            \end{scope}
            
            \begin{scope}[xshift=17cm,yshift=10cm]
                \draw[line width=1pt,fill=white] (0,0) circle (7);
                \path (-90:4) coordinate (b1);
                \draw[thick,red, fill=red] (b1) circle (0.15) ;
                \draw[thick,black, fill=white] (0,2.5) circle (0.15) ;
                
                \draw[line width=1pt,red] (b1) to[out=-170,in=-90] (-6.5,1.65) to[out=90,in=110] (6,2) to[out=-70,in=20] (b1) ;

                \draw[line width=1.5pt] (-5, 0.2) to[out=70, in=110, looseness=2.5] (-4, 0.5); 
                \draw[line width=1.5pt] (-5.3,0.6) to[out=-60, in=-66, looseness=2] (-5, 0.2) to[out=-70, in=48, looseness=2] (-4, 0.5) to[out=48, in=50, looseness=2] (-3.7,0.8);
                \draw[line width=1pt, red] (b1) to(-4, 0.5);
               \path  (-45:7) coordinate (u1) 
               ;
                \draw[bend right=70,line width=1pt, red,dashed] (-4, 0.5) to (u1) ;
                \draw[,line width=1pt, red] (u1) to (b1);
                \node[black] at (-2.45,0.5) {\Huge\ldots};

                \draw[line width=1.5pt] (-1.3, 0.2) to[out=70, in=110, looseness=2.5] (-0.3, 0.5); 
                \draw[line width=1.5pt] (-1.6,0.6) to[out=-60, in=-66, looseness=2] (-1.3, 0.2) to[out=-70, in=48, looseness=2] (-0.3, 0.5) to[out=48, in=50, looseness=2] (0,0.8);
                \draw[line width=1pt, red]  (-0.3, 0.5) to (b1);
                 \draw[,line width=1pt, red,dashed] (-0.3, 0.5)  to [out=90,in=80,looseness=2] (-5.5,0.5);
               \path (-90:7) coordinate (u2) ;
                \draw[line width=1pt, red,dashed] (-5.5, 0.5) to [out=-100,in=150] (u2);
                \draw[bend left=20,line width=1pt, red] (b1) to(u2);
                
                \draw[line width=1.5pt] (1.0, 0.2) to[out=70, in=110, looseness=2.5] (2.0, 0.5); 
                \draw[line width=1.5pt] (0.7,0.6) to[out=-60, in=-66, looseness=2] (1.0, 0.2) to[out=-70, in=48, looseness=2] (2.0, 0.5) to[out=48, in=50, looseness=2] (2.3,0.8);
                \node[black] at (3.3,0.5) {\Huge\ldots};
                \draw[line width=1pt, red] (b1) to(1, 0.2);
                 \draw[,line width=1pt, red,dashed] (1,0.2) to [out=90,in=80,looseness=2] (-5.7,0.5);
                  \path   (-120:7) coordinate (u3) ;
               \draw[,line width=1pt, red,dashed] (-5.7,0.5) to [out=-100,in=130] (u3);
                \draw[bend left,line width=1pt, red] (b1) to (u3);

                \draw[line width=1.5pt] (4.5, 0.2) to[out=70, in=110, looseness=2.5] (5.5, 0.5); 
                \draw[line width=1.5pt] (4.2,0.6) to[out=-60, in=-66, looseness=2] (4.5, 0.2) to[out=-70, in=48, looseness=2] (5.5, 0.5) to[out=48, in=50, looseness=2] (5.8,0.8);
                \draw[line width=1pt, red] (b1) to(4.5, 0.2);
                 \draw[,line width=1pt, red,dashed] (4.5,0.2) to [out=90,in=80,looseness=1.55] (-6.1,0.5);
                \path  (-150:7) coordinate (u4) ;
               \draw[,line width=1pt, red,dashed] (-6.1,0.5) to [out=-100,in=100](u4);
               \draw[bend right,line width=1pt,red](u4) to (b1);

               \node[red] at (-0.9,-2.2) {\Huge$\kern-0.1em.\kern-0.1em.\kern-0.1em.$};
                \node[red] at (1.3,-2.2) {\Huge$\kern-0.1em.\kern-0.1em.\kern-0.1em.$};

                \draw[line width=1.2pt, blue, dashed, ->=bluearrow] (0.15,-3.55) arc[start angle=60, end angle=-110, radius=0.5] arc[start angle=-110, end angle=-200, radius=0.5] arc[start angle=-200, end angle=-265, radius=0.9];            
               
                 \node[black] at (0,5.66) {\scriptsize$\mathbf{x_{i}}$};
                \node[black] at (-0.7,-1) {\scriptsize$\mathbf{x_{n}}$};
                \node[black] at (1.2,-1) {\scriptsize$\mathbf{x_{1}}$};
                \node[black] at (-2.85,-1.8) {\scriptsize$\mathbf{x_{i+1}}$};
                \node[black] at (3.8,-1.5) {\scriptsize$\mathbf{x_{i-1}}$};
            \end{scope}
        \end{tikzpicture}  

        \vspace{0.1cm}
        
        \begin{tikzpicture}[scale=0.35]
            \begin{scope}[xshift=0,yshift=10cm]

                \draw[line width=1pt,fill=white] (0,0) circle (7);
                \path 
                    (90:6) coordinate (b1) 
                    (50:5.5)  coordinate (b2)  
                    (130:5.5) coordinate (b3) 
                    (170:4) coordinate (b4)
                    (10:4)  coordinate (b5)
                    (-30:3.5)  coordinate (b6)
                    (-150:3.5) coordinate (b7)  
                    (90:3) coordinate (r1)
                    (-90:4)  coordinate (r2)
                    (-30:7)    coordinate (u1)
                    (-150:7)  coordinate (u2)
                ;
                \draw[thick,red, fill=red] 
                    (b1) circle (0.15) (b2) circle (0.15) (b3) circle (0.15)
                    (b4) circle (0.15) (b5) circle (0.15) (b6) circle (0.15) (b7) circle (0.15);
                \fill[gray!10] (u2) to[bend left=10] (u1) to[bend left] cycle;  
                \draw[thick,black, fill=white] (r1) circle (0.15) (r2) circle (0.15);
              
                \draw[line width=1pt,red] (b1) to (b2);
                \draw[line width=1pt,red] (b1) to (b3);
                \draw[dashed,line width=1pt,red] (b3) to (b4);
                \draw[dashed,line width=1pt,red] (b2) to (b5);
                \draw[line width=1pt,red] (b4) to (b7);
                \draw[line width=1pt,red] (b6) to (b7);
                \draw[line width=1pt,red] (b6) to (b5);

                \draw[line width=0.8pt,black,bend right] (u2) to (u1);
                \draw[line width=0.8pt,black,dashed,bend left=10] (u2) to (u1);
                
                \draw[bluearrow] (0.3,5.85) arc[start angle=-10, end angle=-200, radius=0.35] arc[start angle=-200, end angle=-375, radius=0.45]  ;
                \draw[bluearrow] (-0.5,5.8)  arc[start angle=-160, end angle=-400, radius=0.65] arc[start angle=-400, end angle=-508, radius=0.75];
                \draw[bluearrow] (2.75,-1.75) arc[start angle=-180, end angle=-400, radius=0.35] arc[start angle=-400, end angle=-535, radius=0.45];
                 \draw[bluearrow] (3.2,-1.2)  arc[start angle=70, end angle=-190, radius=0.65] arc[start angle=-190, end angle=-290, radius=0.75];
                 \draw[bluearrow] (-2.75,-1.75) arc[start angle=0, end angle=-260, radius=0.35] arc[start angle=-260, end angle=-350, radius=0.45] ;
                 \draw[bluearrow] (-3.2,-1.2)  arc[start angle=110, end angle=-150, radius=0.6] arc[start angle=-150, end angle=-240, radius=0.7] ;

                \node[black] at (-2,5.8) {\scriptsize$\mathbf{x_{1}}$};
                \node[black] at (2,5.8)  {\scriptsize$\mathbf{x_{n}}$};
                \node[black] at (-4.5,-0.7) {\scriptsize$\mathbf{x_{i-1}}$};
                \node[black] at (4.5,-0.7){\scriptsize$\mathbf{x_{i+1}}$};
                \node[black] at (0,-2.3)  {\scriptsize$\mathbf{x_{i}}$};
            \end{scope}
            
            \begin{scope}[xshift=17cm,yshift=10cm]
                \draw[line width=1pt,fill=white] (0,0) circle (7);
                \path (-90:6) coordinate (b1);
                \draw[thick,red, fill=red] (b1) circle (0.15) ;
                \draw[thick,black, fill=white] (0,2) circle (0.15) ;
                
                \draw[line width=1pt,red] (b1) to[out=160,in=-120] (-5.5,3) to[out=60,in=110] (6,2) to[out=-70,in=20] (b1);

                \node[red] at (-1.2,-2) {\Huge\ldots};
                 \node[red] at (2.4,-2) {\Huge\ldots};
               
                \draw[line width=1.5pt] (-5.2, 0.2) to[out=70, in=110, looseness=2.5] (-4.2, 0.5); 
                \draw[line width=1.5pt] (-5.5,0.6) to[out=-60, in=-66, looseness=2] (-5.2, 0.2) to[out=-70, in=48, looseness=2] (-4.2, 0.5) to[out=48, in=50, looseness=2] (-3.9,0.8);
                \draw[line width=1pt, red] (b1) to(-4.2, 0.5);
                \path 
                    (-120:7) coordinate (u1) ;
                \draw[line width=1pt, red,dashed] (-4.2, 0.5) to[out=-130,in=120] (u1);
                \draw[line width=1pt, red] (u1) to[out=-30,in=20] (b1);
                \node[black] at (-2.65,0.5) {\Huge\ldots};

                \draw[line width=1.5pt] (-1.5, 0.2) to[out=70, in=110, looseness=2.5] (-0.5, 0.5); 
                \draw[line width=1.5pt] (-1.8,0.6) to[out=-60, in=-66, looseness=2] (-1.5, 0.2) to[out=-70, in=48, looseness=2] (-0.5, 0.5) to[out=48, in=50, looseness=2] (-0.2,0.8);
                \draw[line width=1pt, red] (b1) to(-0.5, 0.5);
                \path 
                    (-80:7) coordinate (u2) ;
                \draw[line width=1pt, red,dashed] (-0.5, 0.5) to[out=-60,in=60] (u2);
                \draw[bend right,line width=1pt, red] (b1) to(u2);
                
                \draw[line width=1.5pt] (1.0, 0.2) to[out=70, in=110, looseness=2.5] (2.0, 0.5); 
                \draw[line width=1.5pt] (0.7,0.6) to[out=-60, in=-66, looseness=2] (1.0, 0.2) to[out=-70, in=48, looseness=2] (2.0, 0.5) to[out=48, in=50, looseness=2] (2.3,0.8);
                \node[black] at (3.3,0.5) {\Huge\ldots};
                \draw[line width=1pt, red] (b1) to(2, 0.5);
                \path 
                    (-50:7) coordinate (u3) ;
                \draw[line width=1pt, red,dashed] (2, 0.5) to[out=-60,in=60] (u3);
                \draw[bend right,line width=1pt, red] (b1) to(u3);

                \draw[line width=1.5pt] (4.5, 0.2) to[out=70, in=110, looseness=2.5] (5.5, 0.5); 
                \draw[line width=1.5pt] (4.2,0.6) to[out=-60, in=-66, looseness=2] (4.5, 0.2) to[out=-70, in=48, looseness=2] (5.5, 0.5) to[out=48, in=50, looseness=2] (5.8,0.8);
                \draw[line width=1pt, red] (b1) to(5.5, 0.5);
                \path 
                    (-20:7) coordinate (u4) ;
                \draw[line width=1pt, red,dashed] (5.5, 0.5) to[out=-60,in=60] (u4);
                \draw[bend right,line width=1pt, red] (b1) to(u4);

                \draw[line width=1.2pt, blue, dashed, ->=bluearrow] (0.18,-5.55) arc[start angle=60, end angle=-110, radius=0.5] arc[start angle=-110, end angle=-200, radius=0.5] arc[start angle=-200, end angle=-270, radius=0.9];

                 \node[black] at (0,6) {\scriptsize$\mathbf{x_{i}}$};
                \node[black] at (-1,-1) {\scriptsize$\mathbf{x_{n}}$};
                \node[black] at (2.3,-1) {\scriptsize$\mathbf{x_{1}}$};
                \node[black] at (-4,-1) {\scriptsize$\mathbf{x_{i+1}}$};
                \node[black] at (5.2,-1) {\scriptsize$\mathbf{x_{i-1}}$};
            \end{scope}                                                
        \end{tikzpicture} 
        \caption{The (partial) geometric models of fully overlapping coupled cycles ($n=m=k$). The upper and lower pairs of diagrams correspond, respectively, to Cases \ref{C1} and \ref{C2} in Definition~\ref{def:couple_cycle}.}

        \label{fig:geo-sing algebra}
    \end{center}
\end{figure}

We now generalize this notion to assemblies of multiple coupled cycles through the following construction.

\noindent \textbf{Construction:}
Let \(\Lambda\) be a nonempty finite index set. For each \(\lambda\in\Lambda\), take a pair of primitive oriented cycles \(\mathcal{C}_{\lambda,1}, \mathcal{C}_{\lambda,2}\) with the overlap pattern prescribed in Definition~\ref{def:couple_cycle}, and denote their common vertex set by \(P_\lambda\). This pair satisfies all conditions of Definition~\ref{def:couple_cycle}, except that its non-shared vertices are not required to be stable vertices. We call the triple \(\mathfrak C_\lambda = \bigl(P_\lambda;\mathcal{C}_{\lambda,1},\mathcal{C}_{\lambda,2}\bigr)\) a \emph{coupled skeleton}.

Let \(Q_{\mathrm{mc}}\) denote the subquiver of \(Q\) spanned by all vertices and arrows occurring in \(\mathcal C_{\lambda,i}\) for \(\lambda\in\Lambda\) and \(i\in\{1,2\}\). Setting \(I_{\mathrm{mc}}:=I\cap \mathbf{k}Q_{\mathrm{mc}}\), we obtain the bound subquiver \((Q_{\mathrm{mc}},I_{\mathrm{mc}})\), which contains all coupled skeletons and inherits its relations from \(I\).

Assume that the sets \(P_\lambda\) are pairwise disjoint. In the
subquiver \(Q_{\mathrm{mc}}\), identify all vertices of each
\(P_\lambda\) with a single marked vertex \(q_\lambda\),
simultaneously for all \(\lambda\in\Lambda\). This operation is
called \emph{formal contraction}. Under this contraction, an
endpoint lying in \(P_\lambda\) is replaced by the corresponding
marked vertex \(q_\lambda\), while the remaining part of the quiver
is left unchanged. Denote the resulting formal contracted diagram by $\overline Q_{\mathrm{mc}}$. In particular, if both endpoints of an arrow \(\alpha\) belong to
the same set \(P_\lambda\), then
$\overline s(\alpha)=\overline t(\alpha)=q_\lambda$, so that \(\alpha\) is represented by a loop at \(q_\lambda\) in
\(\overline Q_{\mathrm{mc}}\). For example, in configuration
\ref{C1} or \ref{C2} of Figure \ref{fig:coupled cycle}, the arrow
directly joining \(v_k\) and \(v_1\) is represented by such a loop
after contraction.

The diagram \(\overline Q_{\mathrm{mc}}\) is introduced only to
record the combinatorial intertwining of the cycles. It is not
regarded as a bound quiver and does not represent either a quotient
of \(A\) or a contraction of the bound quiver
\((Q_{\mathrm{mc}},I_{\mathrm{mc}})\).

For each \(\lambda\in\Lambda\) and \(i\in\{1,2\}\), the image of
\(\mathcal C_{\lambda,i}\) under the formal contraction is an
oriented closed walk in \(\overline Q_{\mathrm{mc}}\), consisting of
the same arrows in the same cyclic order. We denote it by
\(\overline{\mathcal C}_{\lambda,i}\) and call it the
\emph{induced closed walk} of \(\mathcal C_{\lambda,i}\).
If every vertex of \(\mathcal C_{\lambda,i}\) outside $\bigcup_{\mu\in\Lambda}P_\mu$ is stable with respect to \(\mathcal C_{\lambda,i}\), then
\(\overline{\mathcal C}_{\lambda,i}\) is called a
\emph{relatively essential induced cycle}. In other words, to check relative essentiality, we only consider the original vertices that are not contracted. The common vertices represented by marked vertices are temporarily ignored. If, after all marked vertices are expanded, the corresponding vertices in the common regions are also stable, then the original cycle \(\mathcal C_{\lambda,i}\) is an essential cycle. For each $\lambda\in\Lambda$, set $E_\lambda:=\left\{[\overline{\mathcal C}_{\lambda,1}]_{\mathrm{cyc}},[\overline{\mathcal C}_{\lambda,2}]_{\mathrm{cyc}}\right\}$.

\begin{definition}\label{def:multi-coupled-cycle}
 The bound subquiver $(Q_{\mathrm{mc}},I_{\mathrm{mc}})$ is
called a \emph{multi-coupled cycle} if the following conditions hold:

\begin{enumerate}[label=(\text{MC}\arabic*), ref=(\text{MC}\arabic*)]
    \item \label{MC1} For every $\lambda\in\Lambda$, both induced
    cycles $\overline{\mathcal C}_{\lambda,1}$ and
    $\overline{\mathcal C}_{\lambda,2}$ are relatively essential
    induced cycles.

    \item \label{MC2} For any $\lambda,\mu\in\Lambda$, there exists
    a finite sequence $\lambda=\lambda_0,\lambda_1,\ldots,
      \lambda_s=\mu$ such that $E_{\lambda_j}\cap E_{\lambda_{j+1}}\neq\varnothing$.
    In other words, any two coupled skeletons can be joined by a finite chain in which each skeleton shares an induced cycle with the next
    one.
\end{enumerate}

\end{definition}

\begin{remark}\label{rem:multi_coupled}
For a multi-coupled cycle, the following two points should be noted.
\begin{itemize}
    \item If \(|\Lambda|=1\), a single coupled skeleton is exactly a coupled cycle. If \(|\Lambda|\geq2\), \((Q_{\mathrm{mc}},I_{\mathrm{mc}})\) is referred to as a \emph {non-trivial multi-coupled cycle}.
    \item  The vertex \(q_\lambda\) is merely a formal marker representing the contracted common region \(P_\lambda\). It is not a vertex of the original quiver \(Q\), and no gentle or non-gentle type is assigned to it. Moreover, no relations or permitted compositions are defined at \(q_\lambda\). The formal contraction is used solely to suppress the internal configuration of \(P_\lambda\) and thereby display more clearly how the induced cycles are intertwined. All statements concerning relations, permitted compositions, and locally gentle covers are understood in the original bound subquiver \((Q_{\mathrm{mc}},I_{\mathrm{mc}})\).
\end{itemize}

\end{remark}

The formal contraction suppresses the internal structure of each common
region \(P_\lambda\). The following lemma recovers its \(J\)-permitted
behaviour in the original bound quiver: after \(q_\lambda\) is expanded
back to \(P_\lambda\), a permitted path either leaves
\(P_\lambda\) or yields a primitive permitted cycle within it.

An arrow \(\xi\) with \(s(\xi)\notin P_\lambda\) and \(t(\xi)\in P_\lambda\) is termed a \emph{boundary incoming arrow} of \(P_\lambda\), while an arrow \(\zeta\) satisfying \(s(\zeta)\in P_\lambda\) and \(t(\zeta)\notin P_\lambda\) is called a \emph{boundary outgoing arrow}. 
\begin{lemma}\label{lem:cou_ske_con}
Let \(\mathfrak C_\lambda = \bigl(P_\lambda;\mathcal C_{\lambda,1},\mathcal C_{\lambda,2}\bigr)\) with \(P_\lambda = \{v_1,\dots,v_k\}\) be a coupled skeleton, and let \(J\subseteq I\) be a saturated locally gentle cover of \(A\). Take any arrow \(\gamma\in (Q_{\mathrm{mc}})_1\) satisfying \(t(\gamma)\in P_\lambda\), and examine its \(J\)-admissible path once the formal vertex \(q_\lambda\) is unfolded back to \(P_\lambda\). Exactly one of the following two cases holds:
\begin{enumerate}[label=\textup{(\arabic*)}]
  \item The path exits \(P_\lambda\) after finitely many arrows; the prefix up to the first outgoing arrow from \(P_\lambda\) forms a finite \(J\)-admissible path within the coupled skeleton.
  \item The path remains inside \(P_\lambda\) indefinitely, so \(P_\lambda\) contains a primitive \(J\)-admissible cycle.
\end{enumerate}
In particular, if \(\gamma\) is a boundary incoming arrow, then the path from case (1) starts at \(\gamma\) and exits \(P_\lambda\) via a boundary outgoing arrow.
\end{lemma}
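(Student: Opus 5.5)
We will argue by following the $J$-admissible continuation of $\gamma$ step by step inside the finite set of arrows of the coupled skeleton. The basic input is that $B=\mathbf{k}Q/J$ is locally gentle, so \ref{S2} holds for $J$: for each arrow $\alpha$ there is at most one arrow $\beta$ with $\alpha\beta\notin J$. Hence, starting from $\gamma$, the $J$-admissible path $\gamma=p_0p_1p_2\cdots$ is uniquely determined for as long as it can be continued. We take $p_{j+1}$ to be the unique successor of $p_j$, if one exists.

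First we check that, while the path stays in $P_\lambda$, the successor exists and lies in $(Q_{\mathrm{mc}})_1$. Let $v\in P_\lambda$ be the current vertex. In configurations \ref{C1} and \ref{C2}, $v$ has two incoming and two outgoing arrows of $\mathcal C_{\lambda,1}\cup\mathcal C_{\lambda,2}$ (loops counted twice, as in Remark~\ref{remark:transform non-gentle}). After the deletion datum is applied, $v$ becomes a gentle vertex for $J$, so \ref{G1} and \ref{S2} together give a bijection between incoming and outgoing arrows at $v$ through non-relations. This is the same mechanism used in the proof of Proposition~\ref{prop:shuttle_cycle}. So each incoming arrow at $v$ has exactly one permitted outgoing continuation, and that continuation is an arrow of one of the two skeleton cycles.

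Next comes the dichotomy. Consider the sequence of arrows $p_1,p_2,\dots$ with source in $P_\lambda$. If some $p_j$ has target outside $P_\lambda$, let $j$ be minimal. Then $p_0\cdots p_j$ is a finite $J$-admissible path, since no length-$2$ subpath is in $J$ and $J$ is quadratic. It lies in the skeleton and ends with the first outgoing arrow, which gives case (1). Otherwise all $p_j$ lie among the finitely many arrows with both endpoints in $P_\lambda$. By pigeonhole some arrow repeats, say $p_i=p_j$ with $i<j$ and $j-i$ minimal. Determinism of the successor then makes the sequence periodic from $p_i$ onward, so $p_i\cdots p_{j-1}$ is a closed walk whose cyclic length-$2$ subpaths are all $J$-permitted. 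Minimality of $j-i$ gives primitivity, exactly as in the first step of the proof of Lemma~\ref{lem:no-puncture}. This gives case (2). The two cases are mutually exclusive because the path is unique. For a boundary incoming $\gamma$, the path starts at $\gamma$, and in case (1) its final arrow has source in $P_\lambda$ and target outside, so it is a boundary outgoing arrow by definition.

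The main obstacle is the first step: showing that at every shared vertex, including ones where $a_i\ne b_i$ are parallel or antiparallel or where a loop arises from $v_k\to v_1$, both incoming skeleton arrows really have a permitted continuation and none of them ends the path. At a Type (II) vertex, for example, one must verify that the surviving non-relation joins skeleton arrows and not an arrow outside $Q_{\mathrm{mc}}$. I would handle this using \ref{S1}: the two skeleton cycles already exhaust the in-degree and out-degree at each shared vertex. I would then check the finitely many local patterns of Figure~\ref{fig:coupled cycle} one at a time.
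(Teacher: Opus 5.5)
Your proposal is correct and follows essentially the same route as the paper. At each shared vertex, \ref{S1} makes the two skeleton out-arrows exhaust the out-degree, so \ref{G1} and \ref{S2} give a unique $J$-permitted successor. A pigeonhole argument on the finitely many arrows inside $P_\lambda$ then yields a primitive permitted cycle. The only differences are cosmetic: you get primitivity from a minimal-gap repetition rather than by stripping proper powers, and your uniform argument already covers the loop cases ($k=1$ with one or two loops) that the paper treats in a separate paragraph.
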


\begin{proof}
All permitted compositions are considered within the original bound quiver \((Q,J)\). Let \(\eta\in(Q_{\mathrm{mc}})_1\) satisfy \(t(\eta)=v_j\in P_\lambda\), and let \(\beta_1,\beta_2\) denote the two arrows of the coupled skeleton with source at \(v_j\). By the local structure of a coupled skeleton together with Condition~\ref{S1}, these are precisely all arrows of \(Q\) starting at \(v_j\). Since \((Q,J)\) is locally gentle, Conditions~\ref{G1} and~\ref{S2} imply, respectively, that at most one among \(\eta\beta_1,\eta\beta_2\) lies in \(J\), and that at most one of them lies outside \(J\). Consequently, exactly one of these two compositions avoids \(J\). It follows that every arrow whose target lies in \(P_\lambda\) admits a unique \(J\)-permitted successor within the coupled skeleton.

Starting from \(\gamma\), successively append these uniquely determined successors. If the resulting path first exits \(P_\lambda\) after finitely many steps, then case (1) holds. Otherwise, the path stays within the finite subquiver of \(Q_{\mathrm{mc}}\) contained in the common region, so some arrow appears twice. The segment between two consecutive occurrences of this arrow forms a \(J\)-permitted oriented closed walk. Whenever this walk is a positive power of a shorter oriented closed walk, the shorter walk is also \(J\)-permitted. Iterating this reduction produces a primitive \(J\)-permitted cycle. This establishes case (2). The final claim follows directly from the definitions of the boundary arrows.

It remains only to treat the loop cases within the original quiver. If \(k>1\), every loop at the formal vertex \(q_\lambda\) comes from an arrow of \(Q\) whose two endpoints lie in \(P_\lambda\), and is thus already covered by the preceding argument. Now suppose \(P_\lambda=\{v\}\). If \(v\) admits exactly one loop \(\varepsilon\), then the \(J\)-permitted path either exits \(v\) along the other branch of the skeleton or stays at \(v\) and repeatedly traverses \(\varepsilon\). In the latter case \(\varepsilon^2\notin J\), so \(\varepsilon\) itself is a primitive \(J\)-permitted cycle. If \(v\) carries two distinct loops \(a\) and \(b\), Example~\ref{ex:two_loops} shows that at least one among \(a\), \(b\), and \(ab\) is a primitive \(J\)-permitted cycle.
\end{proof}

The following result ensures that the presence of a multi-coupled cycle invariably guarantees a red puncture.

\begin{proposition}\label{prop:multi_coupled}
Let \(A=\mathbf{k}Q/I\) be a string algebra. If \(Q\) contains a
multi-coupled cycle \((Q_{\mathrm{mc}},I_{\mathrm{mc}})\), then every
geometric model of \(A\) contains a red puncture.
\end{proposition}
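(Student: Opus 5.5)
The plan is to fix an arbitrary saturated locally gentle cover $J=J_\delta$, $\delta\in\mathfrak D(A)$, and produce a primitive $J$-permitted cycle supported on $Q_{\mathrm{mc}}$. By the bijection between red punctures and cyclic classes of primitive $J$-permitted cycles, as in the proof of Proposition~\ref{prop:shuttle_cycle}, this gives a red puncture in the model of $\delta$; since $\delta$ is arbitrary, the statement follows. As there, the idea is to build an infinite $J$-permitted path inside the finite quiver $Q_{\mathrm{mc}}$ and extract a primitive permitted cycle from a repeated arrow.

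\textbf{Step 1: every arrow of $Q_{\mathrm{mc}}$ has a $J$-permitted successor in $Q_{\mathrm{mc}}$.} Let $\gamma$ be an arrow of $Q_{\mathrm{mc}}$. There are two cases for its target.
\begin{itemize}
\item If $t(\gamma)\in P_\lambda$ for some $\lambda$, apply Lemma~\ref{lem:cou_ske_con}. Either $P_\lambda$ already contains a primitive $J$-permitted cycle, and we are done, or the permitted continuation of $\gamma$ leaves $P_\lambda$ through a boundary outgoing arrow. That arrow lies on one of the cycles $\mathcal C_{\lambda,1},\mathcal C_{\lambda,2}$, and hence in $Q_{\mathrm{mc}}$.
\item If $t(\gamma)\notin\bigcup_\mu P_\mu$, then by \ref{MC1} the vertex $t(\gamma)$ is stable with respect to every induced cycle $\overline{\mathcal C}_{\mu,i}$ passing through it. The argument in the proof of Proposition~\ref{prop: essential cycle} (cases \ref{E1}--\ref{E3}) then shows that the next arrow of that cycle, $\gamma'$, satisfies $\gamma\gamma'\notin J$.
\end{itemize}
The delicate point in the second case is that $\gamma$ must actually belong to a cycle through $t(\gamma)$ for which stability holds. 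Consider a non-shared vertex $w$ lying on several induced cycles, say $\overline{\mathcal C}_{\mu,i}$ and $\overline{\mathcal C}_{\nu,j}$. Each cycle is stable at $w$, and \ref{S1} and \ref{S2} together with the $J$-permittedness of each cycle's own consecutive pair force the incoming arrow to be followed by the corresponding cycle arrow. I will check this with a short case analysis on the in- and out-degree of $w$ inside $Q_{\mathrm{mc}}$.

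\textbf{Step 2: iterate and extract a cycle.} Starting from any arrow of $Q_{\mathrm{mc}}$, append the successors from Step~1 repeatedly. This yields an infinite sequence $p_1p_2\cdots$ of arrows of the finite quiver $Q_{\mathrm{mc}}$ with $p_jp_{j+1}\notin J$ for all $j$. Some arrow must repeat, and the segment between two occurrences is a $J$-permitted closed walk. Reducing a proper power to its root, as in the proof of Lemma~\ref{lem:cou_ske_con}, gives a primitive $J$-permitted cycle. Since $J$ is generated in length two, this is exactly a $J$-permitted cycle in the sense of the paper, and the puncture correspondence finishes the argument.

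\textbf{The main obstacle} is making the successor choice in Step~1 well defined and uniform across all deletion data, at the transition points between different skeletons. At a boundary of $P_\lambda$, the exit arrow given by Lemma~\ref{lem:cou_ske_con} depends on the choices $\delta$ makes at the non-gentle vertices of $P_\lambda$. It could therefore exit onto an arc of a cycle $\mathcal C_{\lambda,i}$ that belongs to a different skeleton $\mathfrak C_\mu$. This is where \ref{MC2} enters. Because skeletons share induced cycles, every outgoing arc from $P_\lambda$ continues along some relatively essential induced cycle of some skeleton, and all of its non-contracted vertices are stable. The walk therefore always reaches another region $P_\mu$ while staying inside $Q_{\mathrm{mc}}$. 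I would first handle $|\Lambda|=1$, which is a single coupled cycle; there the two exits from $P_\lambda$ return to $P_\lambda$ along stable arcs. I would then induct along the chain of skeletons provided by \ref{MC2}.
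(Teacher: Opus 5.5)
Your Steps 1 and 2 are essentially the paper's proof: fix $J$, and extend a $J$-permitted path inside the finite arrow set of $Q_{\mathrm{mc}}$, using \ref{MC1} at vertices outside $\bigcup_\mu P_\mu$ and Lemma~\ref{lem:cou_ske_con} inside each $P_\lambda$; then extract a primitive $J$-permitted cycle from the first repeated arrow. The ``delicate point'' and the ``main obstacle'' you describe do not arise, because every arrow of $Q_{\mathrm{mc}}$ lies on some $\mathcal C_{\mu,i}$, so the next arrow of that same cycle is already a permitted successor, and because $J$ is fixed no uniformity across deletion data is needed; the paper's proof never uses \ref{MC2}, and your planned induction along chains of skeletons can simply be dropped.
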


\begin{proof}
Let \(J\subseteq I\) be an arbitrary saturated locally gentle cover of \(A\). All \(J\)-permitted compositions are considered within the original bound quiver \((Q,J)\); the formal contraction retains only the combinatorial configuration of the coupled skeletons.

Let \(\mathscr A\subseteq Q_1\) be the finite set of original arrows occurring in the coupled skeletons defining the multi‑coupled cycle, and set \(P=\bigcup_{\lambda\in\Lambda}P_\lambda\). Choose an arrow \(p_1\in\mathscr A\). We construct a \(J\)-permitted path recursively. Suppose that \(p_1p_2\cdots p_n\) has been constructed, with every \(p_i\in\mathscr A\), and put \(\alpha=p_n\) and \(v=t(\alpha)\). If \(v\notin P\), then \(\alpha\) occurs in some \(\mathcal C_{\lambda,i}\). Let \(\beta\) be the arrow immediately following \(\alpha\) on this cycle. By Condition~\ref{MC1}, the vertex \(v\) is stable with respect to \(\mathcal C_{\lambda,i}\). Hence \(\alpha\beta\notin J\), and the path extends by the arrow \(\beta\in\mathscr A\). Suppose that \(v\in P_\lambda\). Such a \(\lambda\) is unique because the common regions are pairwise disjoint. By Lemma~\ref{lem:cou_ske_con}, either \(P_\lambda\) contains a primitive \(J\)-permitted cycle, in which case there is nothing further to prove, or \(\alpha\) extends to a finite \(J\)-permitted path through \(P_\lambda\) whose final arrow leaves \(P_\lambda\). In the latter case, all arrows of this path belong to the coupled skeleton \(\mathfrak C_\lambda\), and hence to \(\mathscr A\). The construction may therefore be repeated from its final arrow.

Unless a primitive \(J\)-permitted cycle has already been obtained inside a common region, this procedure produces a right‑infinite sequence of composable arrows $p_1p_2p_3\cdots$ in \(Q\), such that
$p_i\in\mathscr A$, $p_ip_{i+1}\notin J$, $ \forall i\geq1$. Since \(\mathscr A\) is finite, choose \(s\) minimal such that \(p_s=p_r\) for some \(r<s\). Then $c=p_rp_{r+1}\cdots p_{s-1}$ is an oriented closed walk. Every cyclic length‑two subpath of \(c\) lies outside \(J\), since in particular $p_{s-1}p_r=p_{s-1}p_s\notin J$. Thus \(c\) is \(J\)-permitted. By the minimality of \(s\), its arrows are pairwise distinct; hence \(c\) is not a proper power of a shorter oriented closed walk and is therefore primitive.

We have proved that \((Q,J)\) contains a primitive \(J\)-permitted cycle for every saturated locally gentle cover \(J\) of \(A\). By Theorem~\ref{string algebra--labelled tiling algebra}, the geometric model determined by \(J\) contains a red puncture. Since \(J\) was arbitrary, every geometric model of \(A\) contains a red puncture.
\end{proof}

\subsection{Geometric models with red punctures}

We now present sufficient conditions which guarantee red punctures appear in all geometric models of a string algebra.

\begin{theorem}\label{thm: all-red puncture}
    Let \(A=\mathbf{k}Q/I\) be a string algebra. If \(Q\) contains an
essential cycle, a shuttle cycle, or a multi-coupled cycle, then every geometric model of \(A\) contains a red
puncture.
\end{theorem}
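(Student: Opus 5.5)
The theorem is a union of three sufficient conditions, and each has already been treated separately. I therefore plan to prove it by a three-way case split on which substructure $Q$ contains, invoking the matching proposition in each case. The common framework is the one used throughout Section~\ref{section:String algebras over geometric models with red punctures}. By Lemma~\ref{Lem:geo-unique}, every geometric model of $A$ arises, up to equivalence, from some admissible deletion datum $\delta\in\mathfrak D(A)$, that is, from a saturated locally gentle cover $J_\delta\subseteq I$. By the correspondence recalled from \cite{PPP19}, red punctures of that model are in bijection with cyclic equivalence classes of primitive $J_\delta$-permitted cycles. So in every case it suffices to show that $(Q,J_\delta)$ has a primitive $J_\delta$-permitted cycle for every $\delta$.

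If $Q$ contains an essential cycle $\mathcal C$, I will apply Proposition~\ref{prop: essential cycle}. There, stability of every vertex of $\mathcal C$ under \ref{E1}--\ref{E3} forces $a_{i-1}a_i\notin J_\delta$ for all $i$. Hence $\mathcal C$ itself is $J_\delta$-permitted for every $\delta$, which also recovers the sufficiency direction of Theorem~\ref{thm:common-red-essential}.

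If $Q$ contains a shuttle cycle, I will apply Proposition~\ref{prop:shuttle_cycle}. Conditions \ref{S2} and \ref{G1} at the interior vertices, together with stability at the endpoints $v_1$ and $v_n$, give each arrow of $\mathcal Q$ a unique $J$-permitted successor inside $\mathcal Q$. The resulting infinite permitted path must then repeat an arrow.

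If $Q$ contains a multi-coupled cycle, I will apply Proposition~\ref{prop:multi_coupled}. There, Lemma~\ref{lem:cou_ske_con} controls the behaviour inside each common region $P_\lambda$, and \ref{MC1} controls the behaviour outside it.

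The one point I expect to need care is extracting a \emph{primitive} permitted cycle from an infinite permitted walk. This step is only implicit in the shuttle case. I plan to use the same argument as in the proof of Proposition~\ref{prop:multi_coupled}: take the first repetition $p_s=p_r$. The closed walk $p_r\cdots p_{s-1}$ then has pairwise distinct arrows, so it is primitive. Its closing composition $p_{s-1}p_r=p_{s-1}p_s$ is not in $J$, so it is $J$-permitted. With this observation in place, each case yields a red puncture in the model attached to every $\delta$, and the theorem follows.
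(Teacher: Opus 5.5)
Your proposal is correct and takes the same approach as the paper, which also proves the theorem by invoking Propositions~\ref{prop: essential cycle}, \ref{prop:shuttle_cycle}, and~\ref{prop:multi_coupled} for the three cases. Your extra step in the shuttle case, taking the first repetition $p_s=p_r$ to obtain a primitive $J$-permitted cycle, is a harmless tightening of that proposition's appeal to infinite-dimensionality.
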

\begin{proof}
The three cases follow respectively from
Propositions~\ref{prop: essential cycle},
\ref{prop:shuttle_cycle}, and~\ref{prop:multi_coupled}.
\end{proof}
This naturally raises the following question.

\begin{question}
    Is the sufficient condition given in Theorem \ref{thm: all-red puncture} also necessary?
\end{question}
Although this question has not been fully resolved, the following lemma provides supporting evidence: if every saturated locally gentle cover contains a primitive permitted cycle, then such a cycle can always be chosen to avoid  non-gentle vertices of Type~\textup{(III)}.

\begin{lemma}\label{lem:type-III-elimination}
Let \(A=\mathbf{k}Q/I\) be a string algebra. If every admissible deletion datum \(\delta\in\mathfrak D(A)\) gives a bound quiver \((Q,J_\delta)\) possessing a primitive \(J_\delta\)-permitted cycle, then for any \(\delta\in\mathfrak D(A)\), there exists a primitive \(J_\delta\)-permitted cycle avoiding all  non-gentle vertices of Type~\textup{(III)}.
\end{lemma}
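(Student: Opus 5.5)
Fix \(\delta\in\mathfrak D(A)\). By hypothesis, \((Q,J_\delta)\) has a primitive \(J_\delta\)-permitted cycle \(\mathcal C\). If \(\mathcal C\) avoids every Type~(III) vertex, we are done. Otherwise, let \(v\) be a Type~(III.a) vertex on \(\mathcal C\); Type~(III.b) is dual. Then \(v\) has incoming arrows \(a,b\) and a single outgoing arrow \(c\), and exactly one of \(ac,bc\) lies in \(J_\delta\). Say \(\delta\) deletes \(ac\), so \(ac\notin J_\delta\) and \(bc\in J_\delta\). Since \(c\) is the only arrow leaving \(v\), every passage of \(\mathcal C\) through \(v\) has the form \(\cdots a\,c\cdots\). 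Hence \(b\) does not occur in \(\mathcal C\) at all, since any occurrence would be followed by \(c\).

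The approach is a switching argument. Let \(\delta'\) be the datum that agrees with \(\delta\) everywhere except at \(v\), where it deletes \(bc\) instead of \(ac\). By the proof of Lemma~\ref{lem:no-puncture}, choices at distinct non-gentle vertices are independent, so \(\delta'\in\mathfrak D(A)\). Moreover \(J_\delta\) and \(J_{\delta'}\) differ only in the two relations \(ac,bc\). By hypothesis, \((Q,J_{\delta'})\) has a primitive permitted cycle \(\mathcal C'\).

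If \(\mathcal C'\) does not pass through \(v\), then all of its length-two subpaths avoid \(\{ac,bc\}\). It is therefore also \(J_\delta\)-permitted and misses \(v\). If it does pass through \(v\), it uses \(b\,c\) and never uses \(a\). In that case, follow the \(J_\delta\)-permitted successor function on arrows instead. In \((Q,J_\delta)\), each arrow has at most one permitted successor (\ref{S2}). The finite set of arrows in \(\mathcal C\cup\mathcal C'\) is closed under this function except at the arrow \(b\), whose \(J_\delta\)-successor \(c\) is not permitted. Running the successor orbit from an arrow of \(\mathcal C'\) not incident to \(v\) gives one of two outcomes. Either it closes up before reaching \(b\), giving a \(J_\delta\)-permitted primitive cycle that avoids \(v\), obtained by the minimal-repetition argument used in Proposition~\ref{prop:multi_coupled}. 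Or it reaches \(b\) and stops; this case must be excluded or handled separately.

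To make this work for all Type~(III) vertices at once, I would induct on the number of Type~(III) vertices met by some chosen permitted cycle. Using the dependency structure, I would pick a vertex at which to switch, replacing \(\mathcal C\) by a cycle that meets strictly fewer Type~(III) vertices. This is a potential-function argument: minimize, over all primitive \(J_\delta\)-permitted cycles, the number of Type~(III) vertices traversed, and derive a contradiction if that minimum is positive.

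The main obstacle is the last case. Switching at \(v\) may change which cycles are permitted, and the new cycle \(\mathcal C'\) may in turn pass through other Type~(III) vertices where \(\delta\) and \(\delta'\) agree. So the count need not drop, and the successor orbit may terminate at \(b\). My first attempt is a degree argument at \(v\). Since \(v\) has in-degree \(2\) and out-degree \(1\), any permitted cycle through \(v\) must funnel through \(c\). I would then apply the switch simultaneously at all Type~(III) vertices on the cycle, choosing the datum \(\delta''\) that forbids every arrow actually used by \(\mathcal C\) into those vertices. A primitive \(J_{\delta''}\)-permitted cycle must then either avoid these vertices or enter them through the other arrow. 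Comparing the entering arrows with the permitted successor structure at gentle vertices, which is the same for \(\delta\) and \(\delta''\), I would aim to show that a cycle of the second kind cannot exist. The difficulty is that this needs a global argument rather than a purely local one.
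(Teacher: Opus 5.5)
Your switching step is the right mechanism: a cycle that avoids $v$ uses no 2-path with middle vertex $v$, so it is permitted for both data. But the proposal has two genuine gaps, and you flag both yourself.

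\emph{First gap: the case where $\mathcal C'$ passes through $v$.} You leave this case open, and the successor-orbit argument you sketch for it cannot succeed. The $J_\delta$- and $J_{\delta'}$-successor functions agree on every arrow whose target is not $v$. Hence the $J_\delta$-orbit of any arrow of $\mathcal C'$ simply retraces $\mathcal C'$ and always runs into $b$; the outcome ``closes up before reaching $b$'' never occurs. The missing idea is that this case is impossible, and the proof is purely local. Read $\mathcal C$ and $\mathcal C'$ forward from an occurrence of $c$. By \ref{S2}, each is the permitted-successor orbit of $c$ in its own ideal. Since the two ideals differ only in 2-paths with middle vertex $v$, the two orbits coincide up to the first arrow whose target is $v$. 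For $\mathcal C$ that arrow must be $a$, because $bc\in J_\delta$. For $\mathcal C'$ it must be $b$, because $ac\in J_{\delta'}$. This is a contradiction. If $c$ is a loop, any permitted cycle through $v$ must contain $c^2$, which is permitted for exactly one of the two choices. Hence, for $\delta,\delta'$ differing only at a Type~(III) vertex $v$, at most one of $(Q,J_\delta)$ and $(Q,J_{\delta'})$ has a permitted cycle through $v$. This local observation is the key lemma of the paper's proof.

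\emph{Second gap: the global step.} Minimizing, for a fixed $\delta$, the number of Type~(III) vertices on a cycle, or switching simultaneously to $\delta''$, stalls for exactly the reason you give. The way around it is to quantify over all deletion data inside the induction. List the Type~(III) vertices as $v_1,\dots,v_m$. Prove by induction on $k$ that for every $\delta\in\mathfrak D(A)$ there is a primitive $J_\delta$-permitted cycle avoiding $v_1,\dots,v_k$; the case $k=0$ is the hypothesis. Suppose this fails for $k+1$ at some $\delta_0$. The case $k$ applied to $\delta_0$ then gives a permitted cycle through $v_{k+1}$. Let $\delta_1$ be $\delta_0$ switched at $v_{k+1}$. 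By the local lemma, $(Q,J_{\delta_1})$ has no permitted cycle through $v_{k+1}$. So the case $k$ applied to $\delta_1$, not to $\delta_0$, yields a primitive $J_{\delta_1}$-permitted cycle avoiding $v_1,\dots,v_{k+1}$. It uses no 2-path with middle vertex $v_{k+1}$, so it is also $J_{\delta_0}$-permitted, a contradiction. Nothing has to decrease along a single cycle. Each step adds one avoided vertex, and the previously avoided vertices are handled by applying the inductive hypothesis to the switched datum.
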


\begin{proof}
We first establish a local observation. Let \(v\) be a non-gentle vertex of Type~\textup{(III)}, and let \(\delta,\delta'\in\mathfrak D(A)\) satisfy \(\delta(u)=\delta'(u)\) for all \(u\neq v\) while \(\delta(v)\neq\delta'(v)\). We claim that \((Q,J_\delta)\) and \((Q,J_{\delta'})\) cannot both contain a permitted cycle passing through \(v\).

We only consider vertices of Type~\textup {(III.a)}, the case of Type~\textup {(III.b)} being similar. First assume \(v\) has no loop. Let \(a,b\) be the incoming arrows of \(v\) and \(c\) its unique outgoing arrow, so \(ac,bc\in I\). Swap \(a,b\) if needed, then the two local deletion rules are \(\delta(v)=\{ac\}\) and \(\delta'(v)=\{bc\}\). We get \(ac\notin J_\delta, bc\in J_\delta, ac\in J_{\delta'}, bc\notin J_{\delta'}\).
Suppose \((Q,J_\delta)\) and \((Q,J_{\delta'})\) both contain permitted cycles \(\mathcal C,\mathcal C'\) passing through \(v\). Only \(c\) can leave \(v\), so \(\mathcal C\) uses \(ac\) and \(\mathcal C'\) uses \(bc\) to cross \(v\). Track both cycles starting from \(c\) until returning to \(v\). No path of length two arising before returning to \(v\) has \(v\) as its middle vertex, so each such path either lies in both \(J_\delta\) and \(J_{\delta'}\) or lies outside both ideals. By \ref{S2}, every arrow has exactly one valid successor. The two paths must match fully back to \(v\), which contradicts their final incoming arrows being \(a\) and \(b\) separately. This logic still holds for other Type \textup{(III)} vertices, as \(\delta,\delta'\) perform the same deletions at those vertices. This reasoning applies to cycles sharing arrows or passing through many non-gentle vertices of Type \textup{(III)}.
Now let the unique outgoing arrow \(c\) of \(v\) be a loop. Any permitted cycle through \(v\) must contain \(c^2\), since traversing \(c\) leaves no other outgoing arrow available. The two local deletion choices toggle whether \(c^2\) belongs to the ideal, so at most one of $(Q,J_\delta)$ and $(Q,J_{\delta'})$ can possess a permitted cycle passing through \(v\). 

Let \(v_1,\ldots,v_m\) denote all Type~\textup{(III)} non-gentle vertices in \(Q\). We assume \(m\geq1\) and proceed by induction on \(k\) to show: for every \(1\leq k\leq m\) and every \(\delta\in\mathfrak D(A)\), \((Q,J_\delta)\) admits a primitive permitted cycle avoiding \(v_1,\ldots,v_k\).
For the base case \(k=1\), argue by contradiction. Suppose there exists \(\delta_0\in\mathfrak D(A)\) such that every primitive \(J_{\delta_0}\)-permitted cycle passes through \(v_1\). Construct \(\delta_1\) by only switching the local deletion choice at \(v_1\). By the local observation, \((Q,J_{\delta_1})\) has no permitted cycle through \(v_1\). By assumption, \((Q,J_{\delta_1})\) admits some primitive permitted cycle \(\mathcal C'\), which must avoid \(v_1\). Since \(\delta_0\) and \(\delta_1\) differ only at \(v_1\), every length‑2 subpath of \(\mathcal C'\) belongs to \(J_{\delta_0}\) precisely when it belongs to \(J_{\delta_1}\), so \(\mathcal C'\) is also \(J_{\delta_0}\)-permitted. Primitivity depends only on the underlying closed walk, not the ideal, giving a primitive \(J_{\delta_0}\)-permitted cycle avoiding \(v_1\), a contradiction.

Next fix \(1\leq k<m\) and assume the claim holds for k. Suppose the claim fails for \(k+1\). Then there exists \(\delta_0\in\mathfrak D(A)\) such that every primitive \(J_{\delta_0}\)-permitted cycle avoiding \(v_1,\dots,v_k\) passes through \(v_{k+1}\), and the induction hypothesis guarantees such cycles exist. Define \(\delta_1\) by changing only the deletion rule at \(v_{k+1}\). By the local observation, \((Q,J_{\delta_1})\) admits no permitted cycle through \(v_{k+1}\). Applying the inductive hypothesis to \(\delta_1\) yields a primitive \(J_{\delta_1}\)-permitted cycle \(\mathcal C'\) avoiding \(v_1,\dots,v_k\), and hence also \(v_{k+1}\). Since \(\delta_0\) and \(\delta_1\) differ only at \(v_{k+1}\), \(\mathcal C'\) is primitive and \(J_{\delta_0}\)-permitted, contradicting the assumption on \(\delta_0\).
The induction step closes, and setting \(k=m\) yields the desired conclusion.
\end{proof}

\section{Examples}
In this section, we will give some examples to explain our results.
\begin{example}\label{ex:label set}
Let $A = \mathbf{k}Q/I$ with $Q$  the quiver
\begin{tikzcd}[row sep=small, column sep=small]
1 \arrow[r, "d"]  & 5 \arrow[r, "b"] & 7 \arrow[r, "a"] & 8                 \\
2 \arrow[ru, "e"] & 4 \arrow[r, "f"] & 6 \arrow[u, "c"] & 3 \arrow[l, "g"']
\end{tikzcd}
 and admissible ideal $I = <db, fc, gc, ba, ca>$. After transforming the non-gentle vertices $\{6, 7\}  \subseteq  Q_{0}$, we obtain the corresponding four gentle algebras \( B_i = \mathbf{k}Q/J_i, i=1,2,3,4 \), where \( J_1 = < db, gc, ba >\), \( J_2 = < db, fc, ba > \), \( J_3 = < db, gc, ca > \), \( J_4 = < db, fc, ca > \). They are easily seen to be pairwise isomorphic. However, since the non-gentle vertex $7$ fails to satisfy \ref{U2}, the geometric models of algebra $A$ are not unique. Furthermore, as the quiver $Q$ contains no cycles, none of its geometric models admit a red puncture, see Figure \ref{fig:label set}.

\begin{figure}[htbp]
	\begin{center}
		
\begin{tikzpicture}[scale=0.28]
    \draw[line width=1.5pt,fill=white] (0,0) circle (6cm);
    \path 
    (90:6) coordinate (b1) (45:6) coordinate (b2) (0:6) coordinate (b3)
    (-45:6) coordinate (b4) (-90:6) coordinate (b5) (-135:6) coordinate (b6)
    (180:6) coordinate (b7) (140:6) coordinate (b8) (120:6) coordinate (b9)
    (70:6) coordinate (r1) (25:6) coordinate (r2) (-25:6) coordinate (r3)
    (-70:6) coordinate (r4) (-110:6) coordinate (r5) (-160:6) coordinate (r6)
    (160:6) coordinate (r7) (130:6) coordinate (r8) (105:6) coordinate (r9);
    \draw[line width=1pt,red] (b1) to (b5);
    \draw[bend right,line width=1pt,red] (b1) to (b2);
    \draw[bend right,line width=1pt,red] (b1) to (b3);
    \draw[bend right,line width=1pt,red] (b3) to (b4);
    \draw[bend right,line width=1pt,red] (b9) to (b1);
    \draw[bend right,line width=1pt,red] (b5) to (b6);
    \draw[bend right,line width=1pt,red] (b5) to (b7);
    \draw[bend right,line width=1pt,red] (b7) to (b8);
    \draw[bend left,bluearrow] (0.4,4.8) to (-1,5.3);
    \draw[bend left,bluearrow] (1,5) to (0,3.5);
    \draw[thick,red, fill=red] 
    (b1) circle (0.15cm) (b2) circle (0.15cm) (b3) circle (0.15cm)
    (b4) circle (0.15cm) (b5) circle (0.15cm) (b6) circle (0.15cm)
    (b7) circle (0.15cm) (b8) circle (0.15cm) (b9) circle (0.15cm);	
    \draw[thick,black, fill=white] 
    (r1) circle (0.15cm) (r2) circle (0.15cm) (r3) circle (0.15cm)
    (r4) circle (0.15cm) (r5) circle (0.15cm) (r6) circle (0.15cm)
    (r7) circle (0.15cm) (r8) circle (0.15cm) (r9) circle (0.15cm);
    
    \draw(-3.7,2) node[black] {$\mathbf{x_{1}}$};
    \draw(-2,-4) node[black] {$\mathbf{x_{2}}$};
    \draw(3.3,3.7) node[black] {$\mathbf{x_{4}}$};
    \draw(3.7,-1.6) node[black] {$\mathbf{x_{3}}$};
    \draw(-2,-1) node[black] {$\mathbf{x_{5}}$};
    \draw(3,2) node[black] {$\mathbf{x_{6}}$};
    \draw(0.8,0) node[black] {$\mathbf{x_{7}}$};
    \draw(-1.8,4.5) node[black] {$\mathbf{x_{8}}$};
    
    \draw(0,-7.5) node[black] {$B_{1}=\mathbf{k}Q/J_{1}$};
\end{tikzpicture}
\begin{tikzpicture}[scale=0.28]
    \draw[line width=1.5pt,fill=white] (0,0) circle (6cm);
    \path 
    (90:6) coordinate (b1) (45:6) coordinate (b2) (0:6) coordinate (b3)
    (-45:6) coordinate (b4) (-90:6) coordinate (b5) (-135:6) coordinate (b6)
    (180:6) coordinate (b7) (140:6) coordinate (b8) (120:6) coordinate (b9)
    (70:6) coordinate (r1) (25:6) coordinate (r2) (-25:6) coordinate (r3)
    (-70:6) coordinate (r4) (-110:6) coordinate (r5) (-160:6) coordinate (r6)
    (160:6) coordinate (r7) (130:6) coordinate (r8) (105:6) coordinate (r9);
    \draw[line width=1pt,red] (b1) to (b5);
    \draw[bend right,line width=1pt,red] (b1) to (b2);
    \draw[bend right,line width=1pt,red] (b1) to (b3);
    \draw[bend right,line width=1pt,red] (b3) to (b4);
    \draw[bend right,line width=1pt,red] (b9) to (b1);
    \draw[bend right,line width=1pt,red] (b5) to (b6);
    \draw[bend right,line width=1pt,red] (b5) to (b7);
    \draw[bend right,line width=1pt,red] (b7) to (b8);
    \draw[bend left , bluearrow] (0.4,4.8) to (-1,5.3);
    \draw[bend left,bluearrow] (1,5) to (0,3.5);
    \draw[thick,red, fill=red] 
    (b1) circle (0.15cm) (b2) circle (0.15cm) (b3) circle (0.15cm)
    (b4) circle (0.15cm) (b5) circle (0.15cm) (b6) circle (0.15cm)
    (b7) circle (0.15cm) (b8) circle (0.15cm) (b9) circle (0.15cm);	
    \draw[thick,black, fill=white] 
    (r1) circle (0.15cm) (r2) circle (0.15cm) (r3) circle (0.15cm)
    (r4) circle (0.15cm) (r5) circle (0.15cm) (r6) circle (0.15cm)
    (r7) circle (0.15cm) (r8) circle (0.15cm) (r9) circle (0.15cm);
    
    \draw(-3.7,2) node[black] {$\mathbf{x_{1}}$};
    \draw(-2,-4) node[black] {$\mathbf{x_{2}}$};
    \draw(3.3,3.7) node[black] {$\mathbf{x_{3}}$};
    \draw(3.7,-1.6) node[black] {$\mathbf{x_{4}}$};
    \draw(-2,-1) node[black] {$\mathbf{x_{5}}$};
    \draw(3,2) node[black] {$\mathbf{x_{6}}$};
    \draw(0.8,0) node[black] {$\mathbf{x_{7}}$};
    \draw(-1.8,4.5) node[black] {$\mathbf{x_{8}}$};
    
    \draw(0,-7.5) node[black] {$B_{2}=\mathbf{k}Q/J_{2}$};
\end{tikzpicture}
\begin{tikzpicture}[scale=0.28]
    \draw[line width=1.5pt,fill=white] (0,0) circle (6cm);
    \path 
    (90:6) coordinate (b1) (45:6) coordinate (b2) (0:6) coordinate (b3)
    (-45:6) coordinate (b4) (-90:6) coordinate (b5) (-135:6) coordinate (b6)
    (180:6) coordinate (b7) (140:6) coordinate (b8) (120:6) coordinate (b9)
    (70:6) coordinate (r1) (25:6) coordinate (r2) (-25:6) coordinate (r3)
    (-70:6) coordinate (r4) (-110:6) coordinate (r5) (-160:6) coordinate (r6)
    (160:6) coordinate (r7) (130:6) coordinate (r8) (105:6) coordinate (r9);
    \draw[line width=1pt,red] (b1) to (b5);
    \draw[bend right,line width=1pt,red] (b1) to (b2);
    \draw[bend right,line width=1pt,red] (b1) to (b3);
    \draw[bend right,line width=1pt,red] (b3) to (b4);
    \draw[bend right,line width=1pt,red] (b9) to (b1);
    \draw[bend right,line width=1pt,red] (b5) to (b6);
    \draw[bend right,line width=1pt,red] (b5) to (b7);
    \draw[bend right,line width=1pt,red] (b7) to (b8);
    \draw[bend left,bluearrow] (0.4,4.8) to (-1,5.3);
    \draw[bend left,bluearrow] (-1,-5) to (0,-3.5);
    \draw[thick,red, fill=red] 
    (b1) circle (0.15cm) (b2) circle (0.15cm) (b3) circle (0.15cm)
    (b4) circle (0.15cm) (b5) circle (0.15cm) (b6) circle (0.15cm)
    (b7) circle (0.15cm) (b8) circle (0.15cm) (b9) circle (0.15cm);	
    \draw[thick,black, fill=white] 
    (r1) circle (0.15cm) (r2) circle (0.15cm) (r3) circle (0.15cm)
    (r4) circle (0.15cm) (r5) circle (0.15cm) (r6) circle (0.15cm)
    (r7) circle (0.15cm) (r8) circle (0.15cm) (r9) circle (0.15cm);
    
    \draw(-3.7,2) node[black] {$\mathbf{x_{3}}$};
    \draw(-2,-4) node[black] {$\mathbf{x_{4}}$};
    \draw(3.3,3.7) node[black] {$\mathbf{x_{2}}$};
    \draw(3.7,-1.6) node[black] {$\mathbf{x_{1}}$};
    \draw(-2,-1) node[black] {$\mathbf{x_{6}}$};
    \draw(3,2) node[black] {$\mathbf{x_{5}}$};
    \draw(0.8,0) node[black] {$\mathbf{x_{7}}$};
    \draw(-1.8,4.5) node[black] {$\mathbf{x_{8}}$};
    
    \draw(0,-7.5) node[black] {$B_{3}=\mathbf{k}Q/J_{3}$};
\end{tikzpicture}
\begin{tikzpicture}[scale=0.28]
    \draw[line width=1.5pt,fill=white] (0,0) circle (6cm);
    \path 
    (90:6) coordinate (b1) (45:6) coordinate (b2) (0:6) coordinate (b3)
    (-45:6) coordinate (b4) (-90:6) coordinate (b5) (-135:6) coordinate (b6)
    (180:6) coordinate (b7) (140:6) coordinate (b8) (120:6) coordinate (b9)
    (70:6) coordinate (r1) (25:6) coordinate (r2) (-25:6) coordinate (r3)
    (-70:6) coordinate (r4) (-110:6) coordinate (r5) (-160:6) coordinate (r6)
    (160:6) coordinate (r7) (130:6) coordinate (r8) (105:6) coordinate (r9);
    \draw[line width=1pt,red] (b1) to (b5);
    \draw[bend right,line width=1pt,red] (b1) to (b2);
    \draw[bend right,line width=1pt,red] (b1) to (b3);
    \draw[bend right,line width=1pt,red] (b3) to (b4);
    \draw[bend right,line width=1pt,red] (b9) to (b1);
    \draw[bend right,line width=1pt,red] (b5) to (b6);
    \draw[bend right,line width=1pt,red] (b5) to (b7);
    \draw[bend right,line width=1pt,red] (b7) to (b8);
    \draw[bend left,bluearrow] (0.4,4.8) to (-1,5.3);
    \draw[bend left,bluearrow] (-1,-5) to (0,-3.5);
    \draw[thick,red, fill=red] 
    (b1) circle (0.15cm) (b2) circle (0.15cm) (b3) circle (0.15cm)
    (b4) circle (0.15cm) (b5) circle (0.15cm) (b6) circle (0.15cm)
    (b7) circle (0.15cm) (b8) circle (0.15cm) (b9) circle (0.15cm);	
    \draw[thick,black, fill=white] 
    (r1) circle (0.15cm) (r2) circle (0.15cm) (r3) circle (0.15cm)
    (r4) circle (0.15cm) (r5) circle (0.15cm) (r6) circle (0.15cm)
    (r7) circle (0.15cm) (r8) circle (0.15cm) (r9) circle (0.15cm);
    
    \draw(-3.7,2) node[black] {$\mathbf{x_{4}}$};
    \draw(-2,-4) node[black] {$\mathbf{x_{3}}$};
    \draw(3.3,3.7) node[black] {$\mathbf{x_{2}}$};
    \draw(3.7,-1.6) node[black] {$\mathbf{x_{1}}$};
    \draw(-2,-1) node[black] {$\mathbf{x_{6}}$};
    \draw(3,2) node[black] {$\mathbf{x_{5}}$};
    \draw(0.8,0) node[black] {$\mathbf{x_{7}}$};
    \draw(-1.8,4.5) node[black] {$\mathbf{x_{8}}$};
    
    \draw(0,-7.5) node[black] {$B_{4}=\mathbf{k}Q/J_{4}$};
\end{tikzpicture}	
	\end{center}
	
	\caption{Example \ref{ex:label set}: The geometric models of \(B_1,B_2\) are equivalent, as are those of \(B_3,B_4\). Despite identical surface dissections, the two families aren't equivalent due to different label positions.}
	\label{fig:label set}
\end{figure}

\end{example}

\begin{example}\label{ex-sys I}
Let $A = \mathbf{k}Q/I$ with $Q$  the quiver
\begin{tikzcd}[row sep=small, column sep=small]
1 \arrow[rd, "a"'] &                                    & 3 \arrow[ll, "e"] \\
                   & 2 \arrow[ru, "c"'] \arrow[rd, "d"] &                    \\
4 \arrow[ru, "b"]  &                                    & 5 \arrow[ll, "f"']
\end{tikzcd}
 and admissible ideal $I = <ad, ac, bc, bd, ea, fb>$ . We transform non-gentle vertex $2 \in Q_{0}$. We get two gentle algebras $B_{1} = \mathbf{k}Q/J_{1}$ and $B_{2} = \mathbf{k}Q/J_{2}$, where $J_{1}= < ac, bd, ea, fb>$ and $J_{2} = <ad, bc,ea, fb>$. Since the vertex $2$ fails to satisfy \ref{U1}  (a$_2$), $B_1$ and $B_2$ are non-isomorphic. This means the geometric models of algebra $A$ are not unique. Furthermore, every cycle in the quiver $Q$ contains a relation of Type \ref{rel:R1}, so none of the geometric models of algebra $A$ admit a red puncture, see Figure \ref{fig:connect}.

\begin{figure}[htbp]
	\begin{center}

		\begin{tikzpicture}[scale=0.3]
			\begin{scope}[shift={(-8, 0)}]
				\draw[line width=1.5pt,fill=white] (0,0) circle (6cm);
				\draw[line width=1.5pt,fill=gray!50] (0,2.5) circle (0.8cm);
				\draw[line width=1.5pt,fill=gray!50] (0,-2.5) circle (0.8cm);
				
				\path 
				(0:6) coordinate (b1_1)
				(90:1.7) coordinate (b2_1)
				(180:6) coordinate (b3_1)
				(-90:1.7) coordinate (b4_1)
				
				(90:6) coordinate (r1_1)
				(90:3.3) coordinate (r2_1)
				(-90:3.3) coordinate (r3_1)
				(-90:6) coordinate (r4_1);
				
				\draw[line width=1pt,red] (b4_1) to (b3_1);
				\draw[line width=1pt,red] (b3_1) to (b1_1) ;
				\draw[line width=1pt,red] (b1_1) to (b2_1);
				\draw[,line width=1pt,red] (b3_1) to[out=80,in=180](0,4) to[out=0,in=110](b1_1);
				\draw[,line width=1pt,red] (b3_1) to[out=-80,in=180](0,-4) to[out=0,in=-110](b1_1);
				\draw[bend left,bluearrow] (-5.5,1.2) to (-4.5,-0.4);
				\draw[bend left,bluearrow] (5.5,-1.2) to (4.5,0.4);
				
				\draw[thick, red,  fill=red] 
				(b1_1) circle (0.2cm)
				(b2_1) circle (0.2cm)
				(b3_1) circle (0.2cm)
				(b4_1) circle (0.2cm);
				
				\draw[thick,black, fill=white] 
				(r1_1) circle (0.2cm)
				(r2_1) circle (0.2cm)
				(r3_1) circle (0.2cm)
				(r4_1) circle (0.2cm);
				
				\draw(0,4.6) node[black] {$\mathbf{x_{1}}$};
				\draw(-1,0.5) node[black] {$\mathbf{x_{2}}$};
				\draw(3.5,1.2) node[black] {$\mathbf{x_{3}}$};
				\draw(-3.3,-1.4) node[black] {$\mathbf{x_{5}}$};
				\draw(0,-4.9) node[black] {$\mathbf{x_{4}}$};
				\draw(0,-7) node[black] {$B_{1}=\mathbf{k}Q/J_{1}$};
			\end{scope}
			
			\begin{scope}[shift={(8, 0)}]
				\draw[line width=1.5pt,fill=white] (0,0) circle (6cm);
				\draw[line width=1.5pt,fill=gray!50] (0,2.5) circle (0.8cm);
				\draw[line width=1.5pt,fill=gray!50] (0,-2.5) circle (0.8cm);
				
				\path 
				(0:6) coordinate (b1_2)
				(90:1.7) coordinate (b2_2)
				(180:6) coordinate (b3_2)
				(-90:1.7) coordinate (b4_2)
				
				(90:6) coordinate (r1_2)
				(90:3.3) coordinate (r2_2)
				(-90:3.3) coordinate (r3_2)
				(-90:6) coordinate (r4_2);
				
				\draw[line width=1pt,red] (b4_2) to (b1_2);
				\draw[line width=1pt,red] (b2_2) to (b4_2) ;
				\draw[line width=1pt,red] (b2_2) to (b3_2);
				\draw[line width=1pt,red] (b2_2) to[out=-10,in=-90](1.2,2.5) to[out=90,in=0] (0,4) to[out=180,in=90] (-1.2,2.5) to[out=-90,in=-170](b2_2);
				\draw[line width=1pt,red ] (b4_2) to[out=10,in=90](1.2,-2.5) to[out=-90,in=0] (0,-4) to[out=180,in=-90] (-1.2,-2.5) to[out=90,in=170](b4_2);
				\draw[bend left,bluearrow] (-0.7,-1.7) to (0.8,-1.3);
				\draw[bend left,bluearrow] (0.7,1.7) to (-0.8,1.3);
				
				\draw[thick, red, fill=red] 
				(b1_2) circle (0.2cm)
				(b2_2) circle (0.2cm)
				(b3_2) circle (0.2cm)
				(b4_2) circle (0.2cm);
				
				\draw[thick,black, fill=white] 
				(r1_2) circle (0.2cm)
				(r2_2) circle (0.2cm)
				(r3_2) circle (0.2cm)
				(r4_2) circle (0.2cm);
				
				\draw(0,-4.7) node[black] {$\mathbf{x_{1}}$};
				\draw(-0.8,0) node[black] {$\mathbf{x_{2}}$};
				\draw(2.7,-0.2) node[black] {$\mathbf{x_{3}}$};
				\draw(-2.8,1.5) node[black] {$\mathbf{x_{5}}$};
				\draw(0,4.5) node[black] {$\mathbf{x_{4}}$};
				\draw(0,-7) node[black] {$B_{2}=\mathbf{k}Q/J_{2}$};
			\end{scope}
		\end{tikzpicture}
		
		\caption{Example \ref{ex-sys I}: The geometric models of $B_{1}$ and $B_{2}$ aren't equivalent to each other. }
		\label{fig:connect}
	\end{center}
\end{figure}
\end{example}

\begin{example}\label{ex:condition(3)}
Let $A = \mathbf{k}Q/I$ with $Q$  the quiver
\begin{tikzcd}[row sep=small, column sep=small]
  &                   & 3 \arrow[ld, "b"'] \arrow[rd, "d"] &                  &   \\
1 & 2 \arrow[l, "a"'] &                                    & 5 \arrow[r, "f"] & 6 \\
  &                   & 4 \arrow[lu, "c"] \arrow[ru, "e"'] &                  &  
\end{tikzcd}
and admissible ideal $I = <ba, ca, ef, df>$. We transform non-gentle vertices $\{2,5\} \subseteq Q_{0}$. We get the four gentle algebras $B_{i} = \mathbf{k}Q/J_{i}$, $i=1,2,3,4$, where $J_{1}= < ba, df>$, $J_{2} = <ca,ef>$, $J_{3}= <ba, ef>$ and $J_{4} = <ca,df>$. It is clear that \(B_{1} \cong B_{2}\) and \(B_{3} \cong B_{4}\). Since the vertices $2$ and $5$ fail to satisfy \ref{U3}, $B_2$ is not isomorphic to $B_3$. This implies that the geometric models of $A$ are not unique. In addition, the quiver $Q$ contains no cycles, so no geometric model of $A$ admits a red puncture, see Figure \ref{fig:condition3}.

\begin{figure}[htbp]
	\begin{center}
\begin{tikzpicture}[scale=0.28]
	\draw[line width=1.5pt,fill=white] (0,0) circle (6cm);
	\draw[line width=1.5pt,fill=gray!50] (0,0) circle (1cm);
	
	\path 
	(90:6) coordinate (b1)
	(180:6) coordinate (b2)
	(-90:6) coordinate (b3)
	(90:1) coordinate (b4)
	(180:1) coordinate (b5)
	(-90:1) coordinate (b6)
	
	(135:6) coordinate (r1)
	(225:6) coordinate (r2)
	(0:6) coordinate (r3)
	(135:1) coordinate (r4)
	(225:1) coordinate (r5)
	(0:1) coordinate (r6);
	
	\draw[line width=1pt,red] (b1) to (b2);
	\draw[line width=1pt,red] (b1) to (b4);
	\draw[line width=1pt,red] (b3) to (b6);
	\draw[red,line width=1pt] (b1) to[out=-45,in=90](3,0) to[out=-90,in=-45](b6);
	\draw[red,line width=1pt] (b3) to[out=135,in=-90](-3,0) to[out=90,in=135](b4);
	\draw[red,line width=1pt] (b3) to[out=135,in=-90](-3,0) to[out=90,in=135](b4);
	\draw[red,line width=1pt] (b5) to[out=180,in=130] (-1.5,-1.5) to[out=-50,in=-120] (b6);
	
	\draw[bend left,bluearrow] (1,5) to (-1,5);
	\draw[bend left,bluearrow] (0.9,-1.5) to (-0.5,-1.5);
	
	\draw[thick,red, fill=red] 
	(b1) circle (0.15cm)
	(b2) circle (0.15cm)
	(b3) circle (0.15cm)
	(b4) circle (0.15cm)
	(b5) circle (0.15cm)
	(b6) circle (0.15cm);
	
	\draw[thick,black, fill=white] 
	(r1) circle (0.15cm)
	(r2) circle (0.15cm)
	(r3) circle (0.15cm)
	(r4) circle (0.15cm)
	(r5) circle (0.15cm)
	(r6) circle (0.15cm);
	
	\draw(-3.65,3.4) node[black] {$\mathbf{x_{1}}$};
	\draw(-0.8,3) node[black] {$\mathbf{x_{2}}$};
	\draw(3.8,1.1) node[black] {$\mathbf{x_{4}}$};
	\draw(-3.7,-1.1) node[black] {$\mathbf{x_{3}}$};
	\draw(0.9,-3.5) node[black] {$\mathbf{x_{5}}$};
	\draw(-1.2,-2.5) node[black] {$\mathbf{x_{6}}$};
	
	\draw(0,-7.5) node[black] {$B_{1}=\mathbf{k}Q/J_{1}$};
\end{tikzpicture}
\begin{tikzpicture}[scale=0.28]
	\draw[line width=1.5pt,fill=white] (0,0) circle (6cm);
	\draw[line width=1.5pt,fill=gray!50] (0,0) circle (1cm);
	
	\path 
	(90:6) coordinate (b1)
	(180:6) coordinate (b2)
	(-90:6) coordinate (b3)
	(90:1) coordinate (b4)
	(180:1) coordinate (b5)
	(-90:1) coordinate (b6)
	
	(135:6) coordinate (r1)
	(225:6) coordinate (r2)
	(0:6) coordinate (r3)
	(135:1) coordinate (r4)
	(225:1) coordinate (r5)
	(0:1) coordinate (r6);
	
	\draw[line width=1pt,red] (b1) to (b2);
	\draw[line width=1pt,red] (b1) to (b4);
	\draw[line width=1pt,red] (b3) to (b6);
	\draw[red,line width=1pt] (b1) to[out=-45,in=90](3,0) to[out=-90,in=-45](b6);
	\draw[red,line width=1pt] (b3) to[out=135,in=-90](-3,0) to[out=90,in=135](b4);
	\draw[red,line width=1pt] (b3) to[out=135,in=-90](-3,0) to[out=90,in=135](b4);
	\draw[red,line width=1pt] (b5) to[out=180,in=130] (-1.5,-1.5) to[out=-50,in=-120] (b6);
	
	\draw[bend left,bluearrow] (1,5) to (-1,5);
	\draw[bend left,bluearrow] (0.9,-1.5) to (-0.5,-1.5);
	
	\draw[thick, red, fill=red] 
	(b1) circle (0.15cm)
	(b2) circle (0.15cm)
	(b3) circle (0.15cm)
	(b4) circle (0.15cm)
	(b5) circle (0.15cm)
	(b6) circle (0.15cm);
	
	\draw[thick,black, fill=white] 
	(r1) circle (0.15cm)
	(r2) circle (0.15cm)
	(r3) circle (0.15cm)
	(r4) circle (0.15cm)
	(r5) circle (0.15cm)
	(r6) circle (0.15cm);
	
	\draw(-3.65,3.4) node[black] {$\mathbf{x_{1}}$};
	\draw(-0.8,3) node[black] {$\mathbf{x_{2}}$};
	\draw(3.8,1.1) node[black] {$\mathbf{x_{3}}$};
	\draw(-3.7,-1.1) node[black] {$\mathbf{x_{4}}$};
	\draw(0.9,-3.5) node[black] {$\mathbf{x_{5}}$};
	\draw(-1.2,-2.5) node[black] {$\mathbf{x_{6}}$};
	
	\draw(0,-7.5) node[black] {$B_{2}=\mathbf{k}Q/J_{2}$};
\end{tikzpicture}
\begin{tikzpicture}[scale=0.28]
	\draw[line width=1.5pt,fill=white] (0,0) circle (6cm);
	\draw[line width=1.5pt,fill=gray!50] (0,0) circle (1cm);
	
	\path 
	(90:6) coordinate (b1)
	(180:6) coordinate (b2)
	(-90:6) coordinate (b3)
	(90:1) coordinate (b4)
	(0:6) coordinate (b5)
	(-90:1) coordinate (b6)
	
	(135:6) coordinate (r1)
	(225:6) coordinate (r2)
	(-45:6) coordinate (r3)
	(0:1) coordinate (r4)
	(180:1) coordinate (r5)
	(45:6) coordinate (r6);
	
	\draw[line width=1pt,red] (b1) to (b2);
	\draw[line width=1pt,red] (b1) to (b4);
	\draw[line width=1pt,red] (b3) to (b6);
	\draw[red,line width=1pt] (b1) to[out=-45,in=90](3,0) to[out=-90,in=-45](b6);
	\draw[red,line width=1pt] (b3) to[out=135,in=-90](-3,0) to[out=90,in=135](b4);
	\draw[red,line width=1pt] (b3) to[out=135,in=-90](-3,0) to[out=90,in=135](b4);
	\draw[red,line width=1pt] (b5) to (b3);
	
	\draw[bend left,bluearrow] (1,5) to (-1,5);
	\draw[bend left,bluearrow] (-1,-5) to (1,-5);
	
	\draw[thick, red, fill=red] 
	(b1) circle (0.15cm)
	(b2) circle (0.15cm)
	(b3) circle (0.15cm)
	(b4) circle (0.15cm)
	(b5) circle (0.15cm)
	(b6) circle (0.15cm);
	
	\draw[thick,black, fill=white] 
	(r1) circle (0.15cm)
	(r2) circle (0.15cm)
	(r3) circle (0.15cm)
	(r4) circle (0.15cm)
	(r5) circle (0.15cm)
	(r6) circle (0.15cm);
	
	\draw(-3.65,3.4) node[black] {$\mathbf{x_{1}}$};
	\draw(-0.8,3) node[black] {$\mathbf{x_{2}}$};
	\draw(3.8,1.1) node[black] {$\mathbf{x_{4}}$};
	\draw(-3.7,-1.1) node[black] {$\mathbf{x_{3}}$};
	\draw(0.9,-3.5) node[black] {$\mathbf{x_{5}}$};
	\draw(3.8,-3.5) node[black] {$\mathbf{x_{6}}$};
	
	\draw(0,-7.5) node[black] {$B_{3}=\mathbf{k}Q/J_{3}$};
\end{tikzpicture}
\begin{tikzpicture}[scale=0.28]
	\draw[line width=1.5pt,fill=white] (0,0) circle (6cm);
	\draw[line width=1.5pt,fill=gray!50] (0,0) circle (1cm);
	
	\path 
	(90:6) coordinate (b1)
	(180:6) coordinate (b2)
	(-90:6) coordinate (b3)
	(90:1) coordinate (b4)
	(0:6) coordinate (b5)
	(-90:1) coordinate (b6)
	
	(135:6) coordinate (r1)
	(225:6) coordinate (r2)
	(-45:6) coordinate (r3)
	(0:1) coordinate (r4)
	(180:1) coordinate (r5)
	(45:6) coordinate (r6);
	
	\draw[line width=1pt,red] (b1) to (b2);
	\draw[line width=1pt,red] (b1) to (b4);
	\draw[line width=1pt,red] (b3) to (b6);
	\draw[red,line width=1pt] (b1) to[out=-45,in=90](3,0) to[out=-90,in=-45](b6);
	\draw[red,line width=1pt] (b3) to[out=135,in=-90](-3,0) to[out=90,in=135](b4);
	\draw[red,line width=1pt] (b3) to[out=135,in=-90](-3,0) to[out=90,in=135](b4);
	\draw[red,line width=1pt] (b5) to (b3);
	
	\draw[bend left,bluearrow] (1,5) to (-1,5);
	\draw[bend left,bluearrow] (-1,-5) to (1,-5);
	
	\draw[thick,red, fill=red] 
	(b1) circle (0.15cm)
	(b2) circle (0.15cm)
	(b3) circle (0.15cm)
	(b4) circle (0.15cm)
	(b5) circle (0.15cm)
	(b6) circle (0.15cm);
	
	\draw[thick,black, fill=white] 
	(r1) circle (0.15cm)
	(r2) circle (0.15cm)
	(r3) circle (0.15cm)
	(r4) circle (0.15cm)
	(r5) circle (0.15cm)
	(r6) circle (0.15cm);
	
	\draw(-3.65,3.4) node[black] {$\mathbf{x_{1}}$};
	\draw(-0.8,3) node[black] {$\mathbf{x_{2}}$};
	\draw(3.8,1.1) node[black] {$\mathbf{x_{3}}$};
	\draw(-3.7,-1.1) node[black] {$\mathbf{x_{4}}$};
	\draw(0.9,-3.5) node[black] {$\mathbf{x_{5}}$};
	\draw(3.8,-3.5) node[black] {$\mathbf{x_{6}}$};
	
	\draw(0,-7.5) node[black] {$B_{4}=\mathbf{k}Q/J_{4}$};
\end{tikzpicture}
	\end{center}
	\caption{Example \ref{ex:condition(3)}: The left pair of geometric models are equivalent, as are the right pair, but the left and right families aren't equivalent.}
	\label{fig:condition3}
\end{figure}

\end{example}

\begin{example}\label{ex:cycle-every surface}
    Let $A = \mathbf{k}Q/I$ with $Q$  the quiver
\begin{tikzcd}
1 \arrow[rr, "c" description] &  & 2 \arrow[ll, "a"', bend right] \arrow[ll, "b", bend left]
\end{tikzcd} and admissible ideal $I = <ac,bc,ca,cb>$. We transform non-gentle vertices $\{1,2\}$. We get the four (locally) gentle algebras $B_{i} = \mathbf{k}Q/J_{i}$, $i=1,2,3,4$, where $J_{1}= < ac, ca>$, $J_{2} = <bc, cb>$, $J_{3}= <bc, ca>$ and $J_{4} = <ac, cb> $. Clearly, $B_{1} \cong B_{2}$ are infinite-dimensional, whereas $B_{3} \cong B_{4}$ are finite-dimensional. Then the geometric models of $A$ are not unique. In addition, there exists an infinite path ${}^\infty (bc)^\infty$ (resp. ${}^\infty (ac)^\infty$) in the quiver corresponding to the algebra $B_1$ (resp. $B_2$), so its associated geometric model contains a red puncture. See Figure \ref{fig:2-point,3-arrow} for the concrete geometric models.

\begin{figure}[htbp]
	\begin{center}
		\begin{tikzpicture}[scale=0.3]
			\begin{scope}[xshift=-20,yshift=0cm]
                \draw[line width=1.5pt,fill=white] (0,0) circle (6cm);
				
				\path 
				(0:4) coordinate (b1)
				(180:6) coordinate (b2)
                (0:6) coordinate (r1)
                (0:0) coordinate (r2)
				;

                \draw[thick,red, fill=red] (b1) circle (0.15cm) (b2) circle (0.15cm);
                \draw[thick,black, fill=white] 
	              (r1) circle (0.15cm) (r2) circle (0.15cm);
                
				\draw[line width=1pt,red, bend left=65] (b2) to(b1);
				\draw[line width=1pt,red, bend right=65] (b2) to(b1);

                \draw[bluearrow] (3.85,-0.3) arc[start angle=-120, end angle=-350, radius=0.35] arc[start angle=-350, end angle=-470, radius=0.45] ;
                 \draw[bluearrow] (3.75,0.4) arc[start angle=120, end angle=-100, radius=0.6] arc[start angle=-100, end angle=-230, radius=0.8] ;

                 \draw(0.5,-7.5) node[black] {$B_{1}=\mathbf{k}Q/J_{1}(B_{2}=\mathbf{k}Q/J_{2})$};               
				\draw(0,3.5) node[black] {$\mathbf{x_{2}(x_{1})}$};
			    \draw(0,-3.5) node[black] {$\mathbf{x_{1}(x_{2})}$};
			\end{scope}
			
\begin{scope}[xshift=20cm,yshift=0cm]
\draw[line width=1pt,fill=white] (0,0) circle (6cm);
\draw[line width=1.5pt,fill=gray!50] (0, -5) circle (0.8cm);

\path 
    (-90:4.1) coordinate (b1)
    (-83:5.4) coordinate (r1);
\draw[thick,red, fill=red] (b1) circle(0.15cm) ;
\draw[thick,black, fill=white] (r1) circle(0.15cm);

\draw[line width=1pt,red] (b1)
to[out=160,in=-90] (-3.5,0)
to[out=90,in=180] (0,4)
to[out=0,in=90] (3.5,0)
to[out=-90,in=20](b1);
\draw[line width=1pt, red] (b1) to[out=80, in=80, looseness=0] (0.32, -1.4);
\draw[line width=1pt,dashed, red] (0.32, -1.4)to[out=80, in=100, looseness=2] (6,-0.5);
\draw[bend right,line width=1pt,red] (b1) to (6,-0.5);

\draw[line width=1.5pt] (-0.7, -1.8) to[out=70, in=110, looseness=2.5] (0.3, -1.5); 
\draw[line width=1.5pt] 
(-1.0,-1.4) to[out=-60, in=-66, looseness=2] (-0.7, -1.8) 
to[out=-70, in=48, looseness=2] (0.3, -1.5)
to[out=48, in=50, looseness=2](0.6,-1.2); 

\draw[bend left,bluearrow] (-1.3,-3.5) to (0.8,-3.8);
\draw[bend left,bluearrow] (0.1,-3.2) to (2.5,-3.7);

\draw(0,4.6) node[black] {\scriptsize$\mathbf{x_{2}(x_{1})}$};
\draw(-0.5,0.5) node[black] {\scriptsize$\mathbf{x_{1}(x_{2})}$};

\draw(0.5,-7.5) node[black] {$B_{3}=\mathbf{k}Q/J_{3}(B_{4}=\mathbf{k}Q/J_{4})$};
			\end{scope}
		\end{tikzpicture}
        
		\caption{Example \ref{ex:cycle-every surface}: The geometric models on the left contain a red puncture, while those on the right have no red puncture.}
		\label{fig:2-point,3-arrow}
	\end{center}
\end{figure}
\end{example}

\begin{example}\label{ex:cycle-every surface1}
    Let $A = \mathbf{k}Q/I$ with $Q$  the quiver
\begin{tikzcd}
1 \arrow[rr, "a" description, bend left=49] \arrow[rr, "b" description, bend left=19] &  & 2 \arrow[ll, "c" description, bend left=19] \arrow[ll, "d" description, bend left=49]
\end{tikzcd} and admissible ideal $I = <ac,ad,bc,bd,ca,cb,da,db>$. We transform non-gentle vertices $\{1,2\}$. We get the four locally gentle algebras $B_{i} = \mathbf{k}Q/J_{i}$, $i=1,2,3,4$, where $J_{1}= < ac,bd,ca,db>$, $J_{2} = <ad,bc,cb,da>$, $J_{3}= <ac,bd,cb,da>$ and $J_{4} = <ad,bc,ca,db > $. It is clear that \(B_{1} \cong B_{2}\) and \(B_{3} \cong B_{4}\). Since the vertices $1$ and $2$ fail to satisfy \ref{U1}(a$_2$), $B_1$ and $B_3$ are non-isomorphic. This implies that the geometric models of $A$ are not unique. Furthermore, $B_1,B_2$ carry two infinite paths apiece: ${}^\infty (ad)^\infty,{}^\infty (bc)^\infty$ and ${}^\infty (ac)^\infty,{}^\infty (bd)^\infty$; $B_3,B_4$ each have one: ${}^\infty (adbc)^\infty$ and ${}^\infty (acbd)^\infty$. Consequently, every geometric model of $A$ has a red puncture, see Figure \ref{fig: 2-point,4-arrow}.

\begin{figure}[htbp]
	\begin{center}

\begin{tikzpicture}[scale=0.3]
\begin{scope}[xshift=-17,yshift=0cm]
    \draw[line width=1,fill=white] (0,0) circle (7cm);
			\path 
				(40:5) coordinate (b1)
				(140:5) coordinate (b2)
                (90:3) coordinate (r1)
                (-90:4) coordinate (r2)
                (-30:7)  coordinate (u1)
                 (-150:7)  coordinate (u2)
				;
                \fill[gray!10] (u2) to[bend left] (u1) to[bend left] cycle;

                \draw[thick,red, fill=red] (b1) circle (0.15cm) (b2) circle (0.15cm);
                 \draw[thick,black, fill=white] (r1) circle (0.15cm) (r2) circle (0.15cm);

				\draw[,line width=1pt,red] (b2) to[out=80,in=180](0,5.7) to[out=0,in=110](b1);
				\draw[,line width=1pt,red] (b2) to[out=-80,in=180](0,1) to[out=0,in=-110](b1);

                \draw[line width=0.8pt,black, bend right] (u2) to (u1);
               \draw[line width=0.8pt,black,dashed,bend left] (u2) to (u1);

\draw[bluearrow] 
(-3.83,3.21) +(-80:0.42) arc[start angle=-90, end angle=-335, radius=0.42] arc[start angle=-335, end angle=-420, radius=0.6];
\draw[bluearrow] 
(-3.6,3.8)  arc[start angle=80, end angle=-200, radius=0.7] 
arc[start angle=-200, end angle=-260, radius=1];

\draw[bluearrow] 
(3.75,2.9) arc[start angle=-90, end angle=-335, radius=0.35] arc[start angle=-335, end angle=-440, radius=0.6];
\draw[bluearrow] 
(3.6,3.75)  arc[start angle=110, end angle=-200, radius=0.7] arc[start angle=-200, end angle=-225, radius=1.5] 
;

                 \draw(0.5,-8) node[black] {$B_{1}=\mathbf{k}Q/J_{1}(B_{2}=\mathbf{k}Q/J_{2})$};  
				\draw(0,4.6) node[black] {\scriptsize$\mathbf{x_{1}(x_{2})}$};
			    \draw(0,0.6) node[black] {\scriptsize$\mathbf{x_{2}(x_{1})}$};
				
			\end{scope}
			
			\begin{scope}[xshift=17cm,yshift=0cm]
				\draw[line width=1pt,fill=white] (0,0) circle (7cm);
				\path 
				(-90:6) coordinate (b1);
              
                 \draw[thick,red, fill=red] (b1) circle(0.15cm) ;
                 \draw[thick,black, fill=white] (0,2) circle(0.15cm) ;
                
                \draw[line width=1pt,red] (b1)
               to[out=130,in=-120] (-4,3)
               to[out=60,in=110] (4,3.6)
                to[out=-70,in=70] (b1);
                \draw[line width=1pt, red] (b1) to[out=80, in=80, looseness=0] (0.32, -1.4);
                \draw[line width=1pt,dashed, red] (0.32, -1.4)to[out=80, in=100, looseness=2] (6.9,-0.5);
                \draw[bend right=10,line width=1pt,red] (b1) to (6.9,-0.5);

                 \draw[line width=1.5pt] (-0.7, -1.8) to[out=70, in=110, looseness=2.5] (0.3, -1.5); 
                 \draw[line width=1.5pt] 
                 (-1.0,-1.4) to[out=-60, in=-66, looseness=2] (-0.7, -1.8) 
                 to[out=-70, in=48, looseness=2] (0.3, -1.5)
                 to[out=48, in=50, looseness=2](0.6,-1.2); 

                \draw[bend left,bluearrow] (-0.35,-5.6) to (0.4,-5.2);
                \draw[bend left,bluearrow] (0.1,-4.5) to (1.8,-5);

                \draw[bluearrow] 
(0.2,-5.65)arc[start angle=60, end angle=-230, radius=0.4] 
;

                  \draw[bluearrow] 
(0.8,-5.6)arc[start angle=0, end angle=-130, radius=1]arc[start angle=-130, end angle=-280, radius=1.2]  
;
           
         \draw(0.5,-8) node[black] {$B_{3}=\mathbf{k}Q/J_{3}(B_{4}=\mathbf{k}Q/J_{4})$}; 
				\draw(0,4.6) node[black] {\scriptsize$\mathbf{x_{2}(x_{1})}$};
			    \draw(-1,-3) node[black] {\scriptsize$\mathbf{x_{1}(x_{2})}$};
			\end{scope}
    
		\end{tikzpicture}  
				\caption{Example \ref{ex:cycle-every surface1}: The geometric models on the left carry two red punctures, while those on the right possess only one red puncture.}
		\label{fig: 2-point,4-arrow}
	\end{center}
\end{figure}

\end{example}
\begin{example}\label{ex:Type I+II+III}
    Let $A = \mathbf{k}Q/I$ with $Q$  the quiver
\begin{tikzcd}[row sep=small, column sep=small]
  1 \arrow[rr, "a_1" description, bend left] & & 2 \arrow[dd, "b_1" description, bend left] \arrow[ll, "a_2" description, shift left] \\
  \\
  4 \arrow[uu, "d" description, bend left] \arrow[rr, "c_2" description, shift left] & & 3 \arrow[ll, "c_1" description, bend left] \arrow[uu, "b_2" description, shift left]
\end{tikzcd}
and admissible ideal $I = <a_{1}a_{2}, a_{1}b_{1}, a_{2}a_{1}, b_{2}a_{2}, b_{2}b_{1}, b_{1}c_{1}, b_{1}b_{2}, c_{2}b_{2}, c_{1}c_{2}, c_{1}d, da_{1}>$. We transform the non-gentle vertices $\{1,2,3,4\}$ and obtain eight (locally) gentle algebras $B_i = \mathbf{k}Q/J_i$, $i=1,2,\dots,8$. This is because the quiver $Q$ fails to satisfy all conditions in Lemma \ref{three-conditions}, which implies that the geometric models of $A$ are not unique. Among them, $B_1 = \mathbf{k}Q/J_1$ with $J_1=<da_1,a_1b_1,b_2a_2,b_1c_1,c_2b_2,c_1d>$ is an infinite-dimensional algebra. There exist three infinite paths ${}^\infty (a_1a_2)^\infty, {}^\infty (b_1b_2)^\infty, {}^\infty (c_1c_2)^\infty,$ in its quiver, which correspond to three red punctures in its associated geometric model. Meanwhile, $B_2 = \mathbf{k}Q/J_2$ with $J_2=< a_2a_1,a_1a_2,b_2b_1,b_1c_1,c_2b_2,c_1c_2>$ is a finite-dimensional algebra. Every cycle in its quiver contains a relation of Type \ref{rel:R1}, so its geometric model admits no red puncture. The concrete geometric models are presented in Figure \ref{fig: Type I+II+III}.

\begin{figure}[htbp]
	\begin{center}
		\begin{tikzpicture}[scale=0.35]
			
			\begin{scope}[xshift=-15,yshift=10cm]
                \draw[line width=1.5pt,fill=white] (0,0) circle (6cm);
				
				\path 
				(0:4) coordinate (b1)
				(-90:6) coordinate (b2)
                (180:4) coordinate (b3)
				(90:4) coordinate (b4)
                (180:6) coordinate (r1)
                (0:0) coordinate (r2)
				;

                \draw[thick,red, fill=red] (b1) circle (0.15cm) (b2) circle (0.15cm)
                (b3) circle (0.15cm) (b4) circle (0.15cm);
                \draw[thick,black, fill=white] 
	              (r1) circle (0.15cm) (r2) circle (0.15cm);
                
				\draw[line width=1pt,red] (b2) to(b1);
                \draw[line width=1pt,red] (b4) to(b1);
                 \draw[line width=1pt,red] (b4) to(b3);
                  \draw[line width=1pt,red] (b2) to(b3);

               \draw[bluearrow] (-3.8,0.2)arc[start angle=60, end angle=-160, radius=0.3] arc[start angle=-160, end angle=-295, radius=0.45];

                \draw[bluearrow] (0.2,3.8)arc[start angle=-30, end angle=-260, radius=0.3] arc[start angle=-260, end angle=-380, radius=0.45] ;
                \draw[bluearrow] (-0.5,3.5)arc[start angle=-150, end angle=-400, radius=0.65] arc[start angle=-400, end angle=-500, radius=0.75];

                \draw[bluearrow] (3.85,-0.2)arc[start angle=-100, end angle=-310, radius=0.3] arc[start angle=-310, end angle=-460, radius=0.45]  ;
                \draw[bluearrow] (3.6,0.45)arc[start angle=120, end angle=-120, radius=0.65] arc[start angle=-120, end angle=-225, radius=0.75];

                 \draw(0,-7) node[black] {$B_{1}=\mathbf{k}Q/J_{1}$}; 
                 
               \draw(3,-3) node[black] {\scriptsize$\mathbf{x_{1}}$};
\draw(3,2) node[black] {\scriptsize$\mathbf{x_{2}}$};
\draw(-3,2) node[black] {\scriptsize$\mathbf{x_{3}}$};
\draw(-3,-3) node[black] {\scriptsize$\mathbf{x_{4}}$};
			\end{scope}
			
\begin{scope}[xshift=15cm,yshift=10cm]
\draw[line width=1pt,fill=white] (0,0) circle (6cm);
\draw[line width=1.5pt,fill=gray!50] (0, -5) circle (0.8cm);

\path 
    (0, -5)+(90:0.8) coordinate (b1)
    (0,-5)+(-60:0.8) coordinate (r1)
    (70:6) coordinate (m1);
    
\draw[thick,red, fill=red] (b1) circle(0.15cm) ;
\draw[thick,black, fill=white] (r1) circle(0.15cm);

\draw[line width=1pt,red] (b1)
to[out=-180,in=-90] (-5.5,0)to[out=90,in=100,looseness=1.5] (b1);

\draw[line width=1pt,red] (b1) to[out=90,in=150,looseness=1.5] (4,3) to[out=-30,in=10,looseness=1.4] (b1);

\draw[line width=1pt,red] (b1) to (1.8,0.5);
\draw[bend right=80,line width=1pt,red,dashed] (1.8,0.5) to (4.8, 0.5);
\draw[bend right,line width=1pt,red] (b1) to (4.8,0.5);

\draw[bend left,line width=1pt,red] (b1) to(-2.2, -1.5);
\draw[bend left=50,line width=1pt,red,dashed] (-2.2, -1.5) to(m1);
\draw[bend right,line width=1pt,red] (b1) to (3.8,0.2);
\draw[bend right,line width=1pt,red,dashed] (3.8,0.2) to (m1);

\draw[line width=1.5pt] (-3.2, -1.8) to[out=70, in=110, looseness=2.5] (-2.2, -1.5); 
\draw[line width=1.5pt] 
(-3.5,-1.4) to[out=-60, in=-66, looseness=2] (-3.2, -1.8) 
to[out=-70, in=48, looseness=2] (-2.2, -1.5)
to[out=48, in=50, looseness=2](-1.9,-1.2);

\draw[line width=1.5pt] (0.8, 0.2) to[out=70, in=110, looseness=2.5] (1.8, 0.5); 
\draw[line width=1.5pt] 
(0.5,0.6) to[out=-60, in=-66, looseness=2] (0.8, 0.2) 
to[out=-70, in=48, looseness=2] (1.8, 0.5)
to[out=48, in=50, looseness=2](2.1,0.8);

\draw[line width=1.5pt] (3.8, 0.2) to[out=70, in=110, looseness=2.5] (4.8, 0.5); 
\draw[line width=1.5pt] 
(3.5,0.6) to[out=-60, in=-66, looseness=2] (3.8, 0.2) 
to[out=-70, in=48, looseness=2] (4.8, 0.5)
to[out=48, in=50, looseness=2](5.1,0.8);

\draw[bend left,bluearrow] (-0.75, -3.8) to (0, -3.3); 
\draw[bend left=30,bluearrow] (-0.3, -3.1) to (0.35, -3.3); 
\draw[bend left=30,bluearrow] (0, -2.5) to (1.6, -3.4);
\draw[bend left=30,bluearrow] (0.5, -2.9) to (1.4, -3.7); 
\draw[bend left,bluearrow] (2.1, -3) to (2.7,-3.3) ; 

\draw(0,-7) node[black] {$B_{2}=\mathbf{k}Q/J_{2}$}; 

\draw(-0.5,0.5) node[black] {\scriptsize$\mathbf{x_{1}}$};
\draw(1.8,-1.5) node[black] {\scriptsize$\mathbf{x_{2}}$};
\draw(-2.5,-2.5) node[black] {\scriptsize$\mathbf{x_{3}}$};
\draw(-4.5,2) node[black] {\scriptsize$\mathbf{x_{4}}$};

            \end{scope}
		\end{tikzpicture}
        
		\caption{Example \ref{ex:Type I+II+III}: The geometric models on the left contain  red punctures, while those on the right have no red puncture.}
		\label{fig: Type I+II+III}
	\end{center}
\end{figure}
\end{example}

\begin{example}\label{ex:9-point}
Let $A = \mathbf{k}Q/I$ with $Q$  the quiver
\begin{tikzcd}
                               & 4 \arrow[ld, "b_1" description] \arrow[rd, "b_4" description] &                                \\
1 \arrow[r, "a_1" description] & 5 \arrow[u, "a_4" description] \arrow[d, "a_2" description]   & 3 \arrow[l, "a_3" description] \\
                               & 2 \arrow[ru, "b_3" description] \arrow[lu, "b_2" description] &                               
\end{tikzcd}
and admissible ideal 
$I = <b_{1}a_{1}, b_{2}a_{1}, a_{2}b_{2}, a_{2}b_{3}, b_{3}a_{3}, b_{4}a_{3}, a_{4}b_{1}, a_{4}b_{4}, a_{1}a_{4}, a_{1}a_{2}, a_{3}a_{4}, a_{3}a_{2}>.$ 
We transform the non-gentle vertices 
$\{ 1,2,3,4,5\} \subseteq Q_{0}$ and obtain thirty-two (locally) gentle algebras $B_i = \mathbf{k}Q/J_i$, $i=1,2,\dots,32$. Since the quiver $Q$ fails to satisfy \ref{U1}(a$_2$), the geometric models of $A$ are not unique . In addition, $B_1 = \mathbf{k}Q/J_1$ with $J_1 = < b_2a_1,a_2b_2,b_4a_3,a_4b_4,a_1a_2,a_3a_4>$ is an infinite-dimensional algebra. There exist two infinite paths ${}^\infty (a_1a_4b_1)^\infty, {}^\infty (a_3a_2b_3)^\infty$ in its quiver, which correspond to two red punctures in its associated geometric model. Meanwhile, $B_2 = \mathbf{k}Q/J_2$ with $J_2 = < b_1a_1,a_2b_3,b_4a_3,a_4b_1,a_1a_2,a_3a_4>$ is a finite-dimensional algebra. Every cycle in its quiver contains a relation of Type \ref{rel:R1}, so its geometric model admits no red puncture, see Figure \ref{fig: 9-point}.

\begin{figure}[htbp]
	\begin{center}
		\begin{tikzpicture}[scale=0.35]
			
			\begin{scope}[xshift=0,yshift=10cm]
                \draw[line width=1.5pt,fill=white] (0,0) circle (6cm);
				
				\path 
				(0:0) coordinate (b1)
				(90:4) coordinate (b2)
                (-60:6) coordinate (b3)
				(-120:6) coordinate (b4)
                (90:6) coordinate (r1)
                (-90:6) coordinate (r2)
                (0:3) coordinate (r3)
                (180:3) coordinate (r4)
				;

                \draw[thick,red, fill=red] (b1) circle (0.15cm) (b2) circle (0.15cm)
                (b3) circle (0.15cm) (b4) circle (0.15cm);
                \draw[thick,black, fill=white] 
	              (r1) circle (0.15cm) (r2) circle (0.15cm) (r3) circle (0.15cm) (r4) circle (0.15cm);
                
				\draw[line width=1pt,red] (b2) to(b1);
                \draw[line width=1pt,red] (b3) to(b1);
				\draw[line width=1pt,red] (b4) to(b1);
                \draw[bend left=50,line width=1pt,red] (b4) to(b2);
                \draw[bend right=50,line width=1pt,red] (b3) to(b2);

                \draw[bluearrow] (-0.2,-0.3)arc[start angle=-120, end angle=-410, radius=0.33];
                \draw[bluearrow] (0.25,-0.4)arc[start angle=-45, end angle=-285, radius=0.5];
                \draw[bluearrow] (0,0.7)arc[start angle=90, end angle=-120, radius=0.75];

                \draw[bluearrow] (-0.25,3.9)arc[start angle=-150, end angle=-450, radius=0.33];
                \draw[bluearrow] (0,3.5)arc[start angle=-100, end angle=-390, radius=0.55];
                 \draw[bluearrow] (0.7,3.7)arc[start angle=-30, end angle=-150, radius=0.75];

                 \draw(0,-7) node[black] {$B_{1}=\mathbf{k}Q/J_{1}$}; 
                 
                \draw(-2,-2) node[black] {\scriptsize$\mathbf{x_{1}}$};
                \draw(-3.5,2) node[black] {\scriptsize$\mathbf{x_{2}}$};
                 \draw(3.5,2) node[black] {\scriptsize$\mathbf{x_{3}}$};
                \draw(2,-2) node[black] {\scriptsize$\mathbf{x_{4}}$};
			    \draw(0.7,2) node[black] {\scriptsize$\mathbf{x_{5}}$};
			\end{scope}
			
\begin{scope}[xshift=14cm,yshift=10cm]
\draw[line width=1pt,fill=white] (0,0) circle (6cm);
\draw[line width=1.5pt,fill=gray!50] (0, -5) circle (0.8cm);
\draw[line width=1.5pt,fill=gray!50] (4.65, -0.5) circle (0.8cm);

\path 
    (0, -5)+(90:0.8) coordinate (b1)
    (4.65,-0.5)+(170:0.8) coordinate (b2)
    (0,-5)+(-90:0.8) coordinate (r1)
    (4.65,-0.5)+(0:0.8) coordinate (r2)
    (0:6) coordinate (m1);
    
\draw[thick,red, fill=red] (b1) circle(0.15cm) (b2) circle(0.15cm) ;
\draw[thick,black, fill=white] (r1) circle(0.15cm) (r2) circle(0.15cm);

\draw[line width=1pt,red] (b1)
to [out=-180,in=170,looseness=2] (-0.5,3) to[out=-10,in=90] (b1);

\draw[bend left,line width=1pt,red] (b1) to (-3.7,-0.3);
\draw[bend right,line width=1pt,red] (b1) to (m1);
\draw[bend left=80,line width=1pt,red,dashed] (-3.7,-0.3) to (m1);

\draw[,line width=1pt,red] (b1) to (b2);

\draw[bend left,line width=1pt,red] (b1) to (-0.7,0.7);
\draw[bend left=70,line width=1pt,red,dashed] (3.3, 0) to  (-0.7,0.7);
\draw[,line width=1pt,red] (b1) to (3.3, 0);

\draw[bend left=20,line width=1pt,red] (b1) to (3,1.2);
\draw[line width=1pt,red] (b2) to[out=90,in=40,looseness=2] (3,1.2);

\draw[line width=1.5pt] (-3.7, -0.3) to[out=70, in=110, looseness=2.5] (-2.7, 0); 
\draw[line width=1.5pt] 
(-4.0,0.1) to[out=-60, in=-66, looseness=2] (-3.7, -0.3) 
to[out=-70, in=48, looseness=2] (-2.7, 0)
to[out=48, in=50, looseness=2](-2.4,0.3);

\draw[line width=1.5pt] (-0.7, 0.7) to[out=70, in=110, looseness=2.5] (0.3, 1); 
\draw[line width=1.5pt] 
(-1.0,1.1) to[out=-60, in=-66, looseness=2] (-0.7, 0.7) 
to[out=-70, in=48, looseness=2] (0.3, 1)
to[out=48, in=50, looseness=2](0.6,1.3);

\draw[line width=1.5pt] (2.3, -0.3) to[out=70, in=110, looseness=2.5] (3.3, 0); 
\draw[line width=1.5pt] 
(2,0.1) to[out=-60, in=-66, looseness=2] (2.3, -0.3) 
to[out=-70, in=48, looseness=2] (3.3, 0)
to[out=48, in=50, looseness=2](3.6,0.3);

\draw[bluearrow] (0.1,-3)arc[start angle=120, end angle=30, radius=0.5];
\draw[bluearrow] (-0.7,-3)arc[start angle=160, end angle=78, radius=1];
\draw[bluearrow] (-1,-4.1)arc[start angle=160, end angle=100, radius=0.8];
\draw[bluearrow] (1,-2.9)arc[start angle=60, end angle=-30, radius=0.8];
\draw[bluearrow] (0.4,-2.6)arc[start angle=100, end angle=60, radius=1.5];
\draw[bluearrow] (-1,-3.8)arc[start angle=150, end angle=80, radius=1];

                 \draw(0,-7) node[black] {$B_{2}=\mathbf{k}Q/J_{2}$}; 
                 
\draw(3,1.8) node[black] {\scriptsize$\mathbf{x_{1}}$};
\draw(-4.8,-1.5) node[black] {\scriptsize$\mathbf{x_{2}}$};
\draw(-2.7,-1.5) node[black] {\scriptsize$\mathbf{x_{3}}$};
\draw(3,-2) node[black] {\scriptsize$\mathbf{x_{4}}$};
\draw(-1.55,-0.8) node[black] {\scriptsize$\mathbf{x_{5}}$};

            \end{scope}
		\end{tikzpicture}
        
		\caption{Example \ref{ex:9-point}: The geometric models on the left contain red punctures, while those on the right have no red puncture.}
		\label{fig: 9-point}
	\end{center}
\end{figure}
\end{example}
\begin{example}\label{ex:genus}
Let $A = \mathbf{k}Q/I$ with $Q$  the quiver
\begin{tikzcd}[row sep=small, column sep=small]
1 \arrow[r, "b"'] & 3 \arrow[rd, "a"'] &                    &  &                                                                  \\
                 &                    & 5 \arrow[rr, "d"]  &  & 6 \arrow[lllu, "c"', bend right=15] \arrow[llld, "g", bend left=15] \\
2 \arrow[r, "e"] & 4 \arrow[ru, "f"]  &                    &  &                                                                  
\end{tikzcd}
and admissible ideal $I = <ba,ef,ad,fd,dg,dc>$. We transform non-gentle vertices $\{5,6\} \subseteq Q_{0}$. We get the four (locally) gentle algebras $B_{i} = \mathbf{k}Q/J_{i}$, $i=1,2,3,4$, where $J_{1}= < ba, ef, ad, dg>$, $J_{2} = <ba, ef, fd, dc>$, $J_{3}= <ba, ef, ad, dc>$ and $J_{4} = <ba, ef, fd, dg>$. Clearly, $B_{1} \cong B_{2}$ are finite-dimensional, whereas $B_{3} \cong B_{4}$ are infinite-dimensional, and $B_1$ is not isomorphic to $B_3$. This is because the vertices $5$ and $6$ fail to satisfy \ref{U3}, which implies that the geometric models of $A$ are not unique . In addition, every cycle in the quiver corresponding to the algebra $B_1$ (resp. $B_2$) contains a relation of Type \ref{rel:R1}, so its associated geometric model admits no red puncture. However, there exists an infinite path ${}^\infty (fdg)^\infty$(resp.~ ${}^\infty (adc)^\infty$) in the quiver corresponding to the algebra $B_3$ (resp. $B_4$), and thus its associated geometric model contains a red puncture, see Figure \ref{fig:genus}.

\begin{figure}[htbp]
  \centering
\begin{tikzpicture}[scale=0.25]
    \draw[line width=1pt,fill=white] (0,-1) circle (7.2cm);
    \draw[line width=1.5pt,fill=gray!50] (0, -3) circle (2.5cm);

    \coordinate (b1) at ($(0,-3)+(90:2.5)$);
    \coordinate (r1) at ($(0,-3)+(54:2.5)$);
    \coordinate (b2) at ($(0,-3)+(18:2.5)$);
    \coordinate (r2) at ($(0,-3)+(-18:2.5)$);
    \coordinate (b3) at ($(0,-3)+(-54:2.5)$);
    \coordinate (r3) at ($(0,-3)+(-90:2.5)$);
    \coordinate (b4) at ($(0,-3)+(-126:2.5)$);
    \coordinate (r4) at ($(0,-3)+(-162:2.5)$);
    \coordinate (b5) at ($(0,-3)+(162:2.5)$);
    \coordinate (r5) at ($(0,-3)+(126:2.5)$);

    \draw[thick, black, fill=white]
         (r1) circle (0.2cm)
         (r2) circle (0.2cm)
         (r3) circle (0.2cm)
         (r4) circle (0.2cm)
         (r5) circle (0.2cm);
    \draw[thick, red, fill=red]
         (b1) circle (0.2cm)
         (b2) circle (0.2cm)
         (b3) circle (0.2cm)
         (b4) circle (0.2cm)
         (b5) circle (0.2cm);

    \draw[line width=1.5pt] (1.1, 1.7) to[out=70, in=110, looseness=2.5] (2.1, 2.0);
    \draw[line width=1.5pt] (0.8,2.1) to[out=-60, in=-66, looseness=2] (1.1, 1.7) to[out=-70, in=48, looseness=2] (2.1, 2.0)to[out=48, in=50, looseness=2](2.4,2.3);
    \draw[line width=1.5pt] (-2.2, 2.2) to[out=70, in=110, looseness=2.5] (-1.2, 2.5);
    \draw[line width=1.5pt] (-2.5,2.6) to[out=-60, in=-66, looseness=2] (-2.2, 2.2) to[out=-70, in=48, looseness=2] (-1.2, 2.5)to[out=48, in=50, looseness=2](-0.9,2.8);

    \draw[line width=1pt, red] (b1) to (-2.2, 2.15);
    \coordinate (m) at ($(0,-1)+(-150:7.2)$);
    \draw[bend right=50,line width=1pt, red,dashed] (-2.2, 2.15) to (m);
    \draw[bend right=50,line width=1pt, red]  (m) to (b3);
    \draw[line width=1pt, red] (b1) to[out=10, in=0, looseness=2.5] (0,5.8) to[out=180,in=60] (-5,2.5) to [out=-120,in=160] (b5);
    \draw[line width=1pt, red] (b1) to (2.1, 2.0);
    \coordinate (n) at ($(0,-1)+(20:7.2)$);
    \draw[bend left=50,line width=1pt, red,dashed]  (2.1, 2.0) to (n);
    \draw[line width=1pt, red]  (b1) to[out=0,in=-70] (n);
    \draw[line width=1pt, red] (b1) to[out=170, in=180, looseness=2] (0,5) to[out=0, in=10, looseness=2] (b1);
    \draw[line width=1pt,red] (b2) to[out=-30,in=10,looseness=2]  (b3);
    \draw[line width=1pt,red] (b2) to[out=-30,in=10,looseness=2]  (b3);
    \draw[line width=1pt,red] (b4) to[out=180,in=-150,looseness=2]  (b5);
    
\node[black] at (-4.5,-3.5) {\scriptsize$\mathbf{x_{1}}$};
\node[black] at (4.5,-3.5) {\scriptsize$\mathbf{x_{2}}$};
\node[black] at (-4.1,0) {\scriptsize$\mathbf{x_{3}}$};
\node[black] at (-1,1.7) {\scriptsize$\mathbf{x_{4}}$};
\node[black] at (0.7,1.3) {\scriptsize$\mathbf{x_{5}}$};
\node[black] at (0,4.5) {\scriptsize$\mathbf{x_{6}}$};
    \draw[bluearrow] (-0.35,0)arc[start angle=150, end angle=37, radius=1.2];
    \draw[bluearrow] (1.13,0.75)arc[start angle=100, end angle=20, radius=0.9];
    \draw(0,-9.5) node[black] {$B_{1}=\mathbf{k}Q/J_{1}$};
\end{tikzpicture}
\hspace{-3.5mm}
\begin{tikzpicture}[scale=0.25]
    \draw[line width=1pt,fill=white] (0,-1) circle (7.2cm);
    \draw[line width=1.5pt,fill=gray!50] (0, -3) circle (2.5cm);

    \coordinate (b1) at ($(0,-3)+(90:2.5)$);
    \coordinate (r1) at ($(0,-3)+(54:2.5)$);
    \coordinate (b2) at ($(0,-3)+(18:2.5)$);
    \coordinate (r2) at ($(0,-3)+(-18:2.5)$);
    \coordinate (b3) at ($(0,-3)+(-54:2.5)$);
    \coordinate (r3) at ($(0,-3)+(-90:2.5)$);
    \coordinate (b4) at ($(0,-3)+(-126:2.5)$);
    \coordinate (r4) at ($(0,-3)+(-162:2.5)$);
    \coordinate (b5) at ($(0,-3)+(162:2.5)$);
    \coordinate (r5) at ($(0,-3)+(126:2.5)$);

    \draw[thick, black, fill=white]
         (r1) circle (0.2cm)
         (r2) circle (0.2cm)
         (r3) circle (0.2cm)
         (r4) circle (0.2cm)
         (r5) circle (0.2cm);
    \draw[thick, red, fill=red]
         (b1) circle (0.2cm)
         (b2) circle (0.2cm)
         (b3) circle (0.2cm)
         (b4) circle (0.2cm)
         (b5) circle (0.2cm);

    \draw[line width=1.5pt] (1.1, 1.7) to[out=70, in=110, looseness=2.5] (2.1, 2.0);
    \draw[line width=1.5pt] (0.8,2.1) to[out=-60, in=-66, looseness=2] (1.1, 1.7) to[out=-70, in=48, looseness=2] (2.1, 2.0)to[out=48, in=50, looseness=2](2.4,2.3);
    \draw[line width=1.5pt] (-2.2, 2.2) to[out=70, in=110, looseness=2.5] (-1.2, 2.5);
    \draw[line width=1.5pt] (-2.5,2.6) to[out=-60, in=-66, looseness=2] (-2.2, 2.2) to[out=-70, in=48, looseness=2] (-1.2, 2.5)to[out=48, in=50, looseness=2](-0.9,2.8);

    \draw[line width=1pt, red] (b1) to (-2.2, 2.15);
    \coordinate (m) at ($(0,-1)+(-150:7.2)$);
    \draw[bend right=50,line width=1pt, red,dashed] (-2.2, 2.15) to (m);
    \draw[bend right=50,line width=1pt, red]  (m) to (b3);
    \draw[line width=1pt, red] (b1) to[out=10, in=0, looseness=2.5] (0,5.8) to[out=180,in=60] (-5,2.5) to [out=-120,in=160] (b5);
    \draw[line width=1pt, red] (b1) to (2.1, 2.0);
    \coordinate (n) at ($(0,-1)+(20:7.2)$);
    \draw[bend left=50,line width=1pt, red,dashed]  (2.1, 2.0) to (n);
    \draw[line width=1pt, red]  (b1) to[out=0,in=-70] (n);
    \draw[line width=1pt, red] (b1) to[out=170, in=180, looseness=2] (0,5) to[out=0, in=10, looseness=2] (b1);
    \draw[line width=1pt,red] (b2) to[out=-30,in=10,looseness=2]  (b3);
    \draw[line width=1pt,red] (b2) to[out=-30,in=10,looseness=2]  (b3);
    \draw[line width=1pt,red] (b4) to[out=180,in=-150,looseness=2]  (b5);

   \node[black] at (-4.5,-3.5) {\scriptsize$\mathbf{x_{2}}$};
\node[black] at (4.5,-3.5) {\scriptsize$\mathbf{x_{1}}$};
\node[black] at (-4.1,0) {\scriptsize$\mathbf{x_{4}}$};
\node[black] at (-1,1.7) {\scriptsize$\mathbf{x_{3}}$};
\node[black] at (0.7,1.3) {\scriptsize$\mathbf{x_{5}}$};
\node[black] at (0,4.5) {\scriptsize$\mathbf{x_{6}}$};

    \draw[bluearrow] (-0.35,0)arc[start angle=150, end angle=37, radius=1.2];
    \draw[bluearrow] (1.13,0.75)arc[start angle=100, end angle=20, radius=0.9];
    \draw(0,-9.5) node[black] {$B_{2}=\mathbf{k}Q/J_{2}$};
\end{tikzpicture}
\hspace{-7.5mm}
\begin{tikzpicture}[scale=0.25]
    \draw[line width=1.5pt,fill=white] (0,-1) circle (7.2cm);
    \draw[line width=1.5pt,fill=gray!50] (-2.5, -1) circle (1cm);

    \draw[thick, red, fill=red] ($(-2.5,-1)+(90:1)$) circle (0.2cm) coordinate (b4);
    \draw[thick, black, fill=white] ($(-2.5,-1)+(0:1)$) circle (0.2cm) coordinate (r4);
    \draw[thick, red, fill=red] ($(-2.5,-1)+(-90:1)$) circle (0.2cm) coordinate (b5);
    \draw[thick, black, fill=white] ($(-2.5,-1)+(180:1)$) circle (0.2cm) coordinate (r5);

    \draw[thick, red, fill=red] (50:1) circle (0.2cm) coordinate (b0);
    \draw[thick, red, fill=red] (0,-1) +(60:7.2) circle (0.2cm) coordinate (b1);
    \draw[thick, red, fill=red] (0,-1) +(-15:7.2) circle (0.2cm) coordinate (b2);
    \draw[thick, red, fill=red] (0,-1)+(-90:7.2) circle (0.2cm) coordinate (b3);

    \draw[thick, black, fill=white] (0,-1)+(30:7.2) circle (0.2cm) coordinate (r1);
    \draw[thick, black, fill=white] (0,-1)+(-70:7.2) circle (0.2cm) coordinate (r2);
    \draw[thick, black, fill=white] (0,-1)+(150:7.2) circle (0.2cm) coordinate (r3);

    \draw[bend left,line width=1pt,red] (b1) to (b0);
    \draw[line width=1pt,red] (b1) to (b2);
    \draw[line width=1pt,red] (b0) to (b3);
    \draw[line width=1pt,red] (b4) to[out=0, in=0, looseness=3] (b5);
    \draw[line width=1pt,red] (b5) to (b3);
    \draw[line width=1pt, red] (b3) to[out=160, in=145, looseness=2.7] (b0);

    \draw[bluearrow] ($(b0)+(35:1)$) arc[start angle=20, end angle=-200, radius=1];
    \draw[bluearrow] ($(b0)+(-100:0.5)$) arc[start angle=-95, end angle=-320, radius=0.5];

    \node[black] at (-0.6,-2.3) {\scriptsize$\mathbf{x_{1}}$};
\node[black] at (1.5,2.5) {\scriptsize$\mathbf{x_{4}}$};
\node[black] at (1.2,-2) {\scriptsize$\mathbf{x_{5}}$};
\node[black] at (-2.8,-3.5) {\scriptsize$\mathbf{x_{3}}$};
\node[black] at (4,1.5) {\scriptsize$\mathbf{x_{2}}$};
\node[black] at (-2.5,2.7) {\scriptsize$\mathbf{x_{6}}$};
    \draw(0,-9.5) node[black] {$B_{3}=\mathbf{k}Q/J_{3}$};
\end{tikzpicture}
\hspace{-6mm}
\begin{tikzpicture}[scale=0.25]
    \draw[line width=1.5pt,fill=white] (0,-1) circle (7.2cm);
    \draw[line width=1.5pt,fill=gray!50] (-2.5, -1) circle (1cm);

    \draw[thick, red, fill=red] ($(-2.5,-1)+(90:1)$) circle (0.2cm) coordinate (b4);
    \draw[thick, black, fill=white] ($(-2.5,-1)+(0:1)$) circle (0.2cm) coordinate (r4);
    \draw[thick, red, fill=red] ($(-2.5,-1)+(-90:1)$) circle (0.2cm) coordinate (b5);
    \draw[thick, black, fill=white] ($(-2.5,-1)+(180:1)$) circle (0.2cm) coordinate (r5);

    \draw[thick, red, fill=red] (50:1) circle (0.2cm) coordinate (b0);
    \draw[thick, red, fill=red] (0,-1) +(60:7.2) circle (0.2cm) coordinate (b1);
    \draw[thick, red, fill=red] (0,-1) +(-15:7.2) circle (0.2cm) coordinate (b2);
    \draw[thick, red, fill=red] (0,-1)+(-90:7.2) circle (0.2cm) coordinate (b3);

    \draw[thick, black, fill=white] (0,-1)+(30:7.2) circle (0.2cm) coordinate (r1);
    \draw[thick, black, fill=white] (0,-1)+(-70:7.2) circle (0.2cm) coordinate (r2);
    \draw[thick, black, fill=white] (0,-1)+(150:7.2) circle (0.2cm) coordinate (r3);

    \draw[bend left,line width=1pt,red] (b1) to (b0);
    \draw[line width=1pt,red] (b1) to (b2);
    \draw[line width=1pt,red] (b0) to (b3);
    \draw[line width=1pt,red] (b4) to[out=0, in=0, looseness=3] (b5);
    \draw[line width=1pt,red] (b5) to (b3);
    \draw[line width=1pt, red] (b3) to[out=160, in=145, looseness=2.7] (b0);

    \draw[bluearrow] ($(b0)+(35:1)$) arc[start angle=20, end angle=-200, radius=1];
    \draw[bluearrow] ($(b0)+(-100:0.5)$) arc[start angle=-95, end angle=-320, radius=0.5];

    \node[black] at (-0.6,-2.3) {\scriptsize$\mathbf{x_{2}}$};
\node[black] at (1.5,2.5) {\scriptsize$\mathbf{x_{3}}$};
\node[black] at (1.2,-2.6) {\scriptsize$\mathbf{x_{5}}$};
\node[black] at (-2.6,-3.7) {\scriptsize$\mathbf{x_{4}}$};
\node[black] at (4.5,1.1) {\scriptsize$\mathbf{x_{1}}$};
\node[black] at (-2.5,2.6) {\scriptsize$\mathbf{x_{6}}$};
    \draw(0,-9.5) node[black] {$B_{4}=\mathbf{k}Q/J_{4}$};
\end{tikzpicture}
  \caption{Example \ref{ex:genus}: The two left geometric models are equivalent, as are the two right ones, yet the left and right models aren't equivalent.}
  \label{fig:genus}
\end{figure}

\end{example}

\begin{example}\label{ex:cycle-every surface2}
Let $A = \mathbf{k}Q/I$ with $Q$  the quiver
\begin{tikzcd}[row sep=small, column sep=small]
                        & 3 \arrow[ld, "f"', bend right] & \\
1 \arrow[rr, "b" description, shift right=2] 
  \arrow[rr, "a" description, shift left=2] 
                        &                               & 2 \arrow[lu, "c"', bend right] 
                        \arrow[ld, "d", bend left] \\
                        & 4 \arrow[lu, "e", bend left]    &
\end{tikzcd}
and admissible ideal $I = <ac,ad,bc,bd,ea,eb,fa,fb>$. We transform non-gentle vertices $\{1,2\} \subseteq Q_{0}$. We get the four locally gentle algebras $B_{i} = \mathbf{k}Q/J_{i}$, $i=1,2,3,4$, where $J_{1}= <  fa,eb,ad,bc>$, $J_{2} = < fb,ea,ac,bd>$, $J_{3}= <fa,eb,ac,bd>$ and $J_{4} = < fb,ea,ad,bc> $. Clearly, $B_1 \cong B_2$. The quiver corresponding to $B_1$ (resp. $B_2$) contains an infinite path ${}^\infty (fbdeac)^\infty$ (resp. ${}^\infty (fadebc)^\infty$), which gives rise to one red puncture in its associated geometric model. Meanwhile, $B_3 \cong B_4$. The quiver corresponding to $B_3$ (resp. $B_4$) contains two infinite paths ${}^\infty (fbc)^\infty, {}^\infty (ead)^\infty$ (resp. ${}^\infty (fac)^\infty, {}^\infty (ebd)^\infty$), which yield two red punctures in its associated geometric model. The geometric models of $A$ are not unique , since the quiver $Q$ fails to satisfy condition \ref{U1}(a$_2$) in Lemma \ref{three-conditions}. The concrete geometric models are presented in Figure \ref{fig: dis-cycle}.

\begin{figure}[htbp]
	\begin{center}
		\begin{tikzpicture}[scale=0.35]
			\begin{scope}[xshift=0,yshift=10cm]
    \draw[line width=1pt,fill=white] (0,0) circle (6cm);

    \begin{scope}[xshift=1.5cm, yshift=1.5cm]
        \path 
        (0:0) coordinate (b1)
        (0,-3.5)+(-90:1) coordinate (b2)
        (-90:2.5) coordinate (b3) 
       (0,-3.5)+(180:1) coordinate (r2)
       (0,-3.5)+(0:1) coordinate (r1) 
       (-100:7.6) coordinate (p)
        ;

        \draw[line width=1.5pt,fill=gray!50] (0, -3.5) circle (1cm);
        
\draw[line width=1.5pt] (-3.7, -3.8) to[out=70, in=110, looseness=2.5] (-2.7, -3.5); 
\draw[line width=1.5pt] 
(-4.0,-3.4) to[out=-60, in=-66, looseness=2] (-3.7, -3.8) 
to[out=-70, in=48, looseness=2] (-2.7, -3.5)
to[out=48, in=50, looseness=2](-2.4,-3.2);

        \draw[thick,red, fill=red] (b1) circle (0.15cm) (b2) circle (0.15cm) (b3) circle (0.15cm);
       \draw[thick,black, fill=white] (r1) circle (0.15cm) (r2) circle (0.15cm);

        
        \draw[bend right,line width=1pt,red,dashed] (-3.7, -3.8) to(p);
        \draw[bend right,line width=1pt,red] (p)to(b2);
        \draw[line width=1pt,red] (b3) to (b1);
        \draw[line width=1pt,red] (b1)to[out=180,in=90](-3.7, -3.8) ;
        \draw[line width=1pt,red] (b1) to[out=-160,in=70] (-2.7,-3.37);
        \draw[line width=1pt,red,dashed]  (-2.7,-3.37)to[out=-110,in=170](0.5,-7.15) ; 
        
        \draw[line width=1pt,red] (b1)to[out=0,in=90](3,-2) to[out=-90,in=20](0.5,-7.05);
        \draw[line width=1pt,red] (b1) to[out=-110,in=0]  (-4,-5) to[out=180,in=-90](-6.5,-1.5)to[out=90,in=180](-2.5,3)to[out=0,in=60](b1); 

           \draw[bluearrow] (-0.3,-0.8)arc[start angle=-110, end angle=-200, radius=0.6];

           \draw[bluearrow,bend left=60] (0.2,0.5) to (0,-0.7);

           \draw[bluearrow] (-1,-0.1)arc[start angle=-200, end angle=-345, radius=1.2];

           \draw[bluearrow] (0,-1.2)arc[start angle=-110, end angle=-144, radius=2];
           
				\draw(-3.8,-5.8) node[black] {\scriptsize$\mathbf{x_{1}}$};
                
			    \draw(1.4,-1.5) node[black] {\scriptsize$\mathbf{x_{3}(x_{4})}$};

                \draw(3.5,-1.3) node[black] {\scriptsize$\mathbf{x_{2}}$};

                \draw(-3.8,1.3) node[black] {\scriptsize$\mathbf{x_{4}(x_{3})}$};

                 \draw(0,-8.5) node[black] {$B_{1}=\mathbf{k}Q/J_{1}(B_{2}=\mathbf{k}Q/J_{2})$}; 
    \end{scope}
\end{scope}
			
			\begin{scope}[xshift=17cm,yshift=10cm]
				\draw[line width=1.5pt,fill=white] (0,0) circle (6cm);
				\path 
				(0:4) coordinate (b1)
				(150:4) coordinate (b2)
                (20:6) coordinate (b3)
                (-90:2.5)coordinate (b4)
                 (-160:6)coordinate (r1)
                 (-90:4.5)coordinate (r2)
                ;
                 \draw[line width=1.5pt,fill=gray!50] (0, -3.5) circle (1cm);
                 \draw[thick,red, fill=red] (b1) circle(0.15cm) (b2) circle(0.15cm)(b3) circle(0.15cm) (b4) circle(0.15cm);

                 \draw[thick,black, fill=white] (r1) circle(0.15cm) (r2) circle(0.15cm);
                
              \draw[,line width=1pt,red] (b2) to[out=-110,in=180](0,-5) to[out=0,in=-110](b1);
              \draw[line width=1pt,red] (b1) to(b3);
              \draw[line width=1pt,red] (b2) to(b4);
              \draw[,line width=1pt,red] (b2) to(b1);

                \draw[bluearrow] (-3.2,1.65)arc[start angle=-50, end angle=-365, radius=0.4];
                \draw[bluearrow] (-3.65,1.45)arc[start angle=-110, end angle=-420, radius=0.7];

                 \draw[bluearrow] (3.75,-0.6)arc[start angle=-120, end angle=-205, radius=0.9]arc[start angle=-205, end angle=-320, radius=0.7];
                 \draw[bluearrow] (4.25,0.3)arc[start angle=40, end angle=-195, radius=0.4];
				\draw(0,-5.5) node[black] {\scriptsize$\mathbf{x_{1}}$};
                
			    \draw(0,1.5) node[black] {\scriptsize$\mathbf{x_{2}}$};

                \draw(4,2) node[black] {\scriptsize$\mathbf{x_{3}(x_{4})}$};

                \draw(0,-0.5) node[black] {\scriptsize$\mathbf{x_{4}(x_{3})}$};

                  \draw(0.5,-7) node[black] {$B_{3}=\mathbf{k}Q/J_{3}(B_{4}=\mathbf{k}Q/J_{4})$}; 
			\end{scope}

		\end{tikzpicture}
		\caption{Example \ref{ex:cycle-every surface2}: Both of the above geometric models admit a red puncture, but they are not equivalent.}

        \label{fig: dis-cycle}
	\end{center}
\end{figure}

\end{example}

\end{document}